\pdfoutput=1
\RequirePackage{ifpdf}
\ifpdf 
\documentclass[pdftex]{sigma}
\else
\documentclass{sigma}
\fi

\setcounter{MaxMatrixCols}{15}

\newcommand{\om}{\omega}
\newcommand{\ka}{\kappa}
\newcommand{\si}{\sigma}
\newcommand{\Si}{\Sigma}

\newcommand{\un}{\underline}
\newcommand{\ba}{\mathcal{G}}

\newcommand{\va}{\varphi}

\newcommand{\fg}{\mathfrak g}
\newcommand{\fv}{\mathfrak q}
\newcommand{\fp}{\mathfrak p}

\newcommand{\fk}{\mathfrak k}

\newcommand{\fl}{\mathfrak l}

\newcommand{\fq}{\mathfrak q}

\newcommand{\Ad}{{\rm Ad}}
\newcommand{\Fl}{{\rm Fl}}

\newcommand{\id}{{\rm id}}

\newcommand{\na}{\nabla}
\newcommand{\U}{\Upsilon}

\newcommand{\Rho}{{\mbox{\sf P}}}
\newcommand{\R}{\mathbb{R}}

\newcommand{\ad}{{\rm ad}}

\newcommand{\C}{\mathbb{C}}

\numberwithin{equation}{section}

\newtheorem{Theorem}{Theorem}[section]
\newtheorem{Corollary}[Theorem]{Corollary}
\newtheorem{Lemma}[Theorem]{Lemma}
\newtheorem{Proposition}[Theorem]{Proposition}
 { \theoremstyle{definition}
\newtheorem{Definition}[Theorem]{Definition}
\newtheorem{Example}[Theorem]{Example}
\newtheorem{Remark}[Theorem]{Remark} }

\begin{document}


\newcommand{\arXivNumber}{1607.01965}

\renewcommand{\PaperNumber}{032}

\FirstPageHeading

\ShortArticleName{Local Generalized Symmetries and Locally Symmetric Parabolic Geometries}

\ArticleName{Local Generalized Symmetries\\ and Locally Symmetric Parabolic Geometries}

\Author{Jan GREGOROVI\v{C}~$^\dag$ and Lenka ZALABOV\'A~$^\ddag$}

\AuthorNameForHeading{J.~Gregorovi\v{c} and L.~Zalabov\'a}

\Address{$^\dag$~E.~\v{C}ech Institute, Mathematical Institute of Charles University,\\
\hphantom{$^\dag$}~Sokolovsk\'a 83, Praha 8 - Karl\'in, Czech Republic}
\EmailD{\href{mailto:jan.gregorovic@seznam.cz}{jan.gregorovic@seznam.cz}}

\Address{$^\ddag$~Institute of Mathematics and Biomathematics, Faculty of Science, University of South Bohemia\\
\hphantom{$^\ddag$}~in \v{C}esk\'e Bud\v{e}jovice, Brani\v{s}ovsk\'a 1760, \v{C}esk\'e Bud\v{e}jovice, 370 05, Czech Republic}
\EmailD{\href{mailto:lzalabova@gmail.com}{lzalabova@gmail.com}}

\ArticleDates{Received August 29, 2016, in f\/inal form May 18, 2017; Published online May 23, 2017}

\Abstract{We investigate (local) automorphisms of parabolic geometries that generalize geodesic symmetries. We show that many types of parabolic geometries admit at most one generalized geodesic symmetry at a point with non-zero harmonic curvature. Moreover, we show that if there is exactly one symmetry at each point, then the parabolic geometry is a~generalization of an af\/f\/ine (locally) symmetric space.}

\Keywords{parabolic geometries; generalized symmetries; generalizations of symmetric spaces; automorphisms with f\/ixed points; prolongation rigidity; geometric properties of symmetric parabolic geometries}

\Classification{53C10; 53C22; 53C15; 53C05; 53B15; 53A55}

\section{Introduction}

Symmetric spaces are extremely useful geometric objects on smooth manifolds. There are also many generalizations of symmetric spaces appearing in several areas of dif\/ferential geometry and the theory of Lie groups and algebras. We are interested in generalizations of symmetric spaces in the setting of parabolic geometries, see \cite[Section~3.1]{parabook}. We consider regular normal parabolic geometries $(\ba\to M,\om)$ of type $(G,P)$ on smooth connected manifolds $M$. We assume that $G$ is a Lie group with a $|k|$-graded simple Lie algebra $\fg=\oplus_{i=-k}^{k}\fg_i$ and $P$ is the parabolic subgroup of $G$ with the Lie algebra $\fp=\oplus_{i=0}^{k}\fg_i$ such that the Klein geometry $(G,P)$ is ef\/fective. We f\/ix the reductive Levi decomposition $P=G_0\rtimes \exp (\fp_+)$, where $\fp_+:=\oplus_{i=1}^{k}\fg_i$ and $G_0$ is the Lie group of grading preserving elements of $P$. We write $\fg_-:=\oplus_{i=-k}^{-1}\fg_i$.

Regular normal parabolic geometries provide a solution to the equivalence problem for a~wide class of geometric structures. In the f\/irst step, so called prolongation, one constructs the $P$-bundle $\ba$ over $M$ and the Cartan connection $\om$, which is a $P$-equivariant $\fg$-valued absolute parallelism on $\ba$ that reproduces the generators of fundamental vector f\/ields of the $P$-action. The precise process of the prolongation is not directly related to the results presented in this article and will not be reviewed. In the second step, one computes the harmonic curvature $\kappa_H$ which is the basic invariant of all normal parabolic geometries that (in principle) solves the equivalence problem for normal parabolic geometries. We recall that $\kappa_H$ is the projection of the curvature $[\om,\om]+d\om$ of the Cartan connection $\om$ viewed as a function $\kappa\colon \ba\to \wedge^2 (\fg/\fp)^*\otimes \fg$ into the cohomology space $H^2(\fg_-,\fg)$ of cochains on $\fg_-$ with values in $\fg$.

A (local) automorphism of $(\ba\to M,\om)$ is a (local) $P$-bundle morphism $\va$ on $\ba$ such that $\va^*\om=\om$ holds. We denote by $\un{\va}$ the underlying (local) dif\/feomorphism of $\va$ on $M$. We say that a~(local) dif\/feomorphism $f$ on $M$ preserves the parabolic geometry $(\ba\to M,\om)$ if $f=\un{\va}$ for some (local) automorphism $\va$ of $(\ba\to M,\om)$. Local automorphisms of parabolic geometries are uniquely determined by the underlying dif\/feomorphisms under our assumption of ef\/fectivity of the Klein geometry~$(G,P)$. We are interested in a class of (local) dif\/feomorphisms $f$ on $M$ for which we know a priori the (local) $P$-bundle morphisms $\va$ on $\ba$ covering (local) dif\/feomor\-phisms~$f$ and we ask when they preserve the parabolic geometry, see Def\/inition~\ref{def3}. Let us explain that these dif\/feomorphisms are closely related to geodesic symmetries.

We recall that a normal coordinate system of a linear connection $\nabla$ on $M$ given by the frame~$u$ of~$T_xM$ is given by projections of f\/lows $\Fl_t^{B(X)}$ of the standard horizontal vector f\/ields~$B(X)$ for $X\in \mathbb{R}^n$ on the f\/irst-order frame bundle starting at~$u$, see \cite[Section~III.8]{KoNo}. Indeed, the projection of $\Fl_t^{B(X)}(u)$ onto $M$ is the geodesic of $\nabla$ going through $x$ in the direction with coordina\-tes~$X$ in the frame~$u$. A geodesic symmetry of $\nabla$ at the point $x$ is the unique dif\/feomorphism with coordinates $-\id_{\mathbb{R}^n}$ in all normal coordinate system given by any frame $u$ of $T_xM$.

The pair $(M,\nabla)$ is an af\/f\/ine locally symmetric space if each geodesic symmetry of $\nabla$ is an af\/f\/ine transformation.
In \cite{Helgason} or \cite{Besse} the authors studied the theory of symmetric spaces, where the geodesic symmetries preserve a geometric structure such as Riemannian metric or quaternionic K\"ahler structure. The f\/irst author classif\/ied in \cite{G3} all parabolic geometries preserved by all geodesic symmetries on semisimple symmetric spaces. Typical examples of such parabolic geometries are provided by the projective class of $\nabla$ of the af\/f\/ine (locally) symmetric space $(M,\nabla)$ or the conformal class of the metric on the Riemannian symmetric space or the (para)-quaternionic geometry given by the (para)-quaternionic K\"ahler symmetric space.

A normal coordinate system on the parabolic geometry $(p\colon \ba\to M,\om)$ given by $u\in \ba$ is given by projections $p\circ \Fl_t^{\om^{-1}(X)}(u)$ of f\/lows of the constant vector f\/ields $\om^{-1}(X)$ for coordinates $X\in \fg_-$. If we consider (local) dif\/feomorphisms $f$ on $M$ that are linear in some normal coordinate system of $(\ba\to M,\om)$, then we know a priori the (local) $P$-bundle morphisms $\va$ on $\ba$ covering (local) dif\/feomorphisms~$f$ and we ask when they preserve the parabolic geometry. The action of $G_0$ on $\ba$ induces a linear change of the normal coordinates, but the change of coordinates induced by the action of $\exp(\fp_+)$ is highly non-linear. Nevertheless, we can consider the class of (local) automorphisms of parabolic geometries with the property that their underlying (local) dif\/feomorphisms on $M$, analogously to geodesic symmetries, have the same coordinates in all normal coordinate systems in which the coordinates are linear.

\begin{Definition} \label{def3}
For $s$ in the center $Z(G_0)$ of $G_0$ and $u\in \ba$, let $s_u$ be the (local) $P$-bundle morphism of $\ba$ induced by the formula
\begin{gather*}s_u(\Fl_1^{\om^{-1}(X)}(u)):=\Fl_1^{\om^{-1}(X)}(us)=\Fl_1^{\om^{-1}(\Ad(s) (X))}(u)s\end{gather*}
for all $X$ in a maximal possible neighbourhood of $0$ in $\fg_-$ preserved by $\Ad(s)$.
\begin{enumerate}\itemsep=0pt
\item The (local) $P$-bundle morphism $s_u$ is a \emph{$($local$)$ $s$-symmetry} of the parabolic geometry $(\ba\to M,\om)$ at $x=p(u)$ if $s_u^*\om=\om$.
\item We write $\un s_u$ for the underlying (local) dif\/feomorphism on $M$ of the $P$-bundle mor\-phism~$s_u$ which has coordinates $\Ad(s)\in {\rm Gl}(\fg_-)$ in the normal coordinate system given by~$u$.
\item All (local) $s$-symmetries at all $x\in M$ for all $s\in Z(G_0)$ together are called \emph{$($local$)$ gene\-ra\-li\-zed symmetries} of parabolic geometries.
\item The parabolic geometry is \emph{$($locally$)$ $s$-symmetric} if there is a (local) $s$-symmetry at each point of $M$.
\end{enumerate}
\end{Definition}
\begin{Remark}
We always assume that $s$ is not the identity element $e$ in $Z(G_0)$, because $\id_\ba$ is the unique $e$-symmetry of each parabolic geometry, and therefore, the results presented in this article are trivial for $e$-symmetries.
\end{Remark}

Firstly, let us focus on the automorphisms $\varphi$ of parabolic geometries such that $\un{\varphi}$ has coordinates $-\id_{\fg_-}$ in the normal coordinate system given by $u\in \ba$. If such an automorphism exists, then there is $m\in Z(G_0)$ such that $\Ad(m)=-\id_{\fg_-}$. Therefore we will always speak about (local) $m$-symmetries in this case.

The bundle morphisms $m_u$ (and thus dif\/feomorphisms $\underline{m}_u$) are generally dif\/ferent for dif\/fe\-rent~$u$ from the f\/iber~$\ba_x$ over~$x$ and each of them can be a (local) $m$-symmetry. In particular, there can be inf\/initely many (local) $m$-symmetries at $x$. In fact, this is the case of all mo\-dels~$G/P$ of AHS-structures, where the bundle maps $m_u$ are $m$-symmetries for all $u\in \ba$. On the other hand, the second author proved in \cite[Theorem 2.5]{LZ1} that projective, conformal and (para)-quaternionic geometries are the only types of parabolic geometries allowing $m$-symmetries at a~point~$x$ with a~non-zero Weyl (harmonic) curvature. Moreover, there is at most one $m$-symmetry at the point~$x$ with a non-zero Weyl curvature.

The second author showed in \cite[Theorem~3.2]{LZ1} that if a geodesic symmetry at $x$ for some linear connection on~$M$ is an automorphism of $(\ba\to M,\om)$, then the geodesic symmetry has coordinates $-\id_{\fg_-}$ in the normal coordinate system given by some $u\in \ba_x$. We prove in this article that there is the following characterization of non-f\/lat parabolic geometries which are preserved by all geodesic symmetries on af\/f\/ine (locally) symmetric spaces.

\begin{Theorem}\label{t1.1}
Suppose there is a parabolic geometry on a smooth connected manifold $M$ with a non-zero harmonic curvature at one point. Then the following claims are equivalent:
\begin{enumerate}\itemsep=0pt
\item[$1.$] The parabolic geometry is (locally) $m$-symmetric, i.e., at each point $x$ of $M$, there is a~$($local$)$ automorphism of the parabolic geometry such that the underlying $($local$)$ diffeomorphism on~$M$ has coordinates $-\id_{\fg_-}$ in the normal coordinate system for some $u\in \ba_x$.
\item[$2.$] The parabolic geometry is preserved by each geodesic symmetry on an affine $($locally$)$ symmetric space $(M,\nabla)$.
\end{enumerate}

In particular, if one of the above claims is satisfied, then the parabolic geometry is $($locally$)$ homogeneous, the affine $($locally$)$ symmetric space $(M,\nabla)$ from Claim~$(2)$ is unique and $\nabla$ is a~distinguished $($Weyl$)$ connection of the parabolic geometry.
\end{Theorem}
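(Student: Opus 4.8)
The plan is to prove the two implications separately and then read off the final ``in particular'' assertions from the construction used in $(1)\Rightarrow(2)$. The implication $(2)\Rightarrow(1)$ is immediate: if $(M,\nabla)$ is an affine (locally) symmetric space whose geodesic symmetries preserve $(\ba\to M,\om)$, then at each $x$ the geodesic symmetry is the underlying map of a (local) automorphism, and by \cite[Theorem~3.2]{LZ1} it has coordinates $-\id_{\fg_-}$ in the normal coordinate system given by some $u\in\ba_x$. By Definition~\ref{def3} this exhibits it as the underlying diffeomorphism of an $m$-symmetry at $x$ (with $\Ad(m)=-\id_{\fg_-}$), so the geometry is (locally) $m$-symmetric.

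For $(1)\Rightarrow(2)$ I would first fix the algebraic setting. Since there is a point with non-zero harmonic curvature carrying an $m$-symmetry, \cite[Theorem~2.5]{LZ1} forces the type $(G,P)$ to be projective, conformal or (para)-quaternionic; in particular $\fg$ is $|1|$-graded, $\fg_-=\fg_{-1}\cong T_xM$, and at every point with non-zero harmonic curvature the $m$-symmetry is \emph{unique}. Next I would observe that $\Ad(m)$ is $-\id$ on $\fg_{-1}$ and on $\fg_1$ and $+\id$ on $\fg_0$, so $\Ad(m^2)=\id_{\fg}$ and effectivity of $(G,P)$ gives $m^2=e$. Using that $s_u$ is a $P$-bundle morphism, $s_u^2(u)=s_u(um)=um^2=u$, and since a Cartan connection is an absolute parallelism this yields $s_u^2=\id_\ba$; hence each underlying $m$-symmetry $\un{s}_x$ is an involution with $d\un{s}_x|_x=-\id_{T_xM}$, so $x$ is an isolated fixed point.

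On the open set $M_0$ where the harmonic curvature is non-zero the symmetry $\un{s}_x$ is unique and depends smoothly on $x$; standard transvection reasoning (the map $x\mapsto\un{s}_x(x_0)$ is a local diffeomorphism near $x_0$, its differential there being invertible and equal to $2\,\id$ in normal coordinates) shows that automorphisms act locally transitively, so the geometry is locally homogeneous, and a short argument using Claim~$(1)$ at boundary points shows that the harmonic curvature is nowhere vanishing, i.e.\ $M_0=M$. Uniqueness then upgrades the conjugation identity $\un{s}_x\circ\un{s}_y\circ\un{s}_x^{-1}=\un{s}_{\un{s}_x(y)}$ to hold everywhere, so the $\un{s}_x$ form a smooth symmetric-space multiplication and the frames realizing the unique symmetry assemble into a smooth reduction of $\ba$, i.e.\ a distinguished Weyl structure $\si$. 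I would take $\nabla:=\nabla^{\si}$ to be its Weyl connection, equivalently the unique affine connection invariant under all $\un{s}_x$, cf.\ \cite{KoNo}.

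It remains to identify $\nabla$ with the connection sought in Claim~$(2)$. Because the collection $\{\un{s}_y\}$ is permuted by each automorphism $\un{s}_x$, the Weyl structure $\si$ and hence $\nabla$ is preserved by every $\un{s}_x$; thus each $\un{s}_x$ is an affine transformation of $\nabla$ fixing $x$ with $1$-jet $-\id$, and uniqueness of affine maps with prescribed $1$-jet identifies $\un{s}_x$ with the geodesic symmetry of $\nabla$ at $x$. Consequently $(M,\nabla)$ is an affine (locally) symmetric space whose geodesic symmetries are exactly the $m$-symmetries, which gives Claim~$(2)$, local homogeneity, and the fact that $\nabla$ is a distinguished (Weyl) connection; uniqueness of $(M,\nabla)$ follows since any competitor has, by \cite[Theorem~3.2]{LZ1} and the uniqueness in \cite[Theorem~2.5]{LZ1}, the same geodesic symmetries at every point, and an affine connection is determined by its geodesic symmetries. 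The main obstacle I expect is precisely the passage from the \emph{pointwise} symmetries to a \emph{smooth invariant} object over all of $M$: establishing smoothness together with the involution and conjugation identities across the harmonic-curvature vanishing locus, and verifying that the resulting reduction really is a Weyl structure whose Weyl connection reproduces the parabolic normal-coordinate description $-\id_{\fg_-}$ of $\un{s}_x$ --- this is exactly where the $|1|$-gradedness from \cite[Theorem~2.5]{LZ1} and the normal-coordinate comparison of \cite[Theorem~3.2]{LZ1} are indispensable.
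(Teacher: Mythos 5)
Your reduction steps are sound and match the paper's citations: $(2)\Rightarrow(1)$ via \cite[Theorem~3.2]{LZ1}, the restriction to projective/conformal/(para)-quaternionic types and pointwise uniqueness via \cite[Theorem~2.5]{LZ1}, $m^2=e$ from effectivity, and the final identification of the invariant connection's geodesic symmetries with the $m$-symmetries (including the standard $-\id$ argument forcing uniqueness of an invariant connection). However, there is a genuine gap exactly where you flag your "main obstacle," and it is not a technicality you can defer: you assert that on the set $M_0$ of points with non-zero harmonic curvature the pointwise-unique symmetry "depends smoothly on $x$," and every subsequent step --- the transvection argument for local homogeneity (differentiating $x\mapsto \un s_x(x_0)$ requires smoothness in $x$), the assembly of the distinguished frames into a smooth Weyl structure, and the existence of the invariant connection $\nabla$ --- rests on this unproved assertion. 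Pointwise uniqueness gives no regularity: the symmetry at $x$ is encoded by the value $\U^{\si,s_u}(x)$ (Proposition~\ref{auto-action}), and nothing in your argument controls its variation with $x$. In the paper this is the core content: formula~(\ref{gl--zavorka}) of Proposition~\ref{cor-ws} expresses the deformation components $Z_i$ of the symmetry through iterated covariant derivatives of $\kappa_H$, prolongation rigidity outside the $1$-eigenspace (automatic here since $\fg_-^m(1)=0$) makes these equations solvable, and Proposition~\ref{almost-ws} runs an iterated BCH normalization producing the $\un S$-invariant Weyl structure (unique here because $\fp_+^m(1)=0$); smoothness of the system $S$ is an \emph{output} of that construction, not an input. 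Your proposed logical order --- transitivity and smoothness first, invariant structure second --- is the reverse of what can actually be executed. The same applies to your unsupplied "short argument at boundary points" for $M_0=M$: in the paper, everywhere-nonvanishing of $\kappa_H$ follows from local homogeneity (Theorem~\ref{6.2}, using \cite[Lemma~2.2]{GZ2}), which itself is obtained only after the invariant Weyl connection exists.

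On architecture: the paper does not prove Theorem~\ref{t1.1} directly but obtains it as the specialization of Theorem~\ref{t1.3} together with Theorem~\ref{6.2} (the case $\fg_-^s(1)=0$, where $m$-invariance of the torsion forces $T^\si=0$ and $\nabla^\si T^\si=\nabla^\si R^\si=0$ yields the affine locally symmetric space). Your classical symmetric-space route would be an attractive alternative endgame, and its concluding steps (identifying $\un s_x$ with the geodesic symmetry via the $1$-jet, uniqueness of $(M,\nabla)$) are correct once smoothness and the fact that the invariant connection is a Weyl connection are in hand --- but since those are precisely the points you leave open, the proposal as written does not constitute a proof.
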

\begin{Remark}
Let us emphasize that (local) $m$-symmetries can appear only on $|1|$-graded parabolic geometries and only the projective, conformal and (para)-quaternionic geometries (and their complexif\/ications) can satisfy the assumptions and conditions of Theorem~\ref{t1.1}.
\end{Remark}

The global version of this statement was proved in \cite{Podesta} for projective geometries and in \cite[Corollary 4.5]{LZ3} for conformal and (para)-quaternionic geometries under the additional assumption of homogeneity or under the assumption that $m$-symmetries depend smoothly on the point~$x$. In \cite[Theorem~1]{GZ4}, we proved the global version of Theorem~\ref{t1.1} for conformal geometries. In this article, we obtain Theorem~\ref{t1.1} as a special case of Theorem~\ref{t1.3}.

There are many other interesting types of parabolic geometries, e.g., parabolic contact geometries, where there is no element $m\in P$ such that $\Ad(m)=-\id_{\fg_-}$. Thus they cannot be preserved by geodesic symmetries of any af\/f\/ine (locally) symmetric space. On the other hand, there are generalizations of symmetric spaces appearing in the literature that are nearly related to contact geometries.
In \cite{BFG} and \cite{KZ} the authors study sub-Riemannian and CR geometries preserved by so-called geodesic ref\/lexions on ref\/lexion spaces, see \cite{Loos}. A geodesic ref\/lexion on a ref\/lexion space is given by an endomorphism $s\in {\rm Gl}(\mathbb{R}^n)$ such that $s^2=\id_{\mathbb{R}^n}$ in a normal coordinate system of an admissible linear connection on the ref\/lexion space, see \cite{Loos}.

We studied in \cite{disertace,GZ} parabolic geometries on ref\/lexion spaces preserved by geodesic ref\/lexions. We proved that a geodesic ref\/lexion at $x$ preserving a parabolic geometry $(\ba\to M,\om)$ is given by an endomorphism $\Ad(s)\in {\rm Gl}(\fg_-)$ for some $s\in G_0$ such that $s^2=\id$ in a normal coordinate system of the parabolic geometry given by some $u\in \ba_x$. However, if $s\in G_0$ is not contained in $Z(G_0)$, then we cannot expect the uniqueness of the automorphisms $\varphi$ such that $\un{\varphi}$ has coordinates $\Ad(s)\in {\rm Gl}(\fg_-)$ in the normal coordinate system given by some $u\in \ba_x$. Indeed, if there is an other automorphism $\psi$ such that $\un{\psi}$ has coordinates $\Ad(g_0)\in {\rm Gl}(\fg_-)$ for some $g_0\in G_0$ in the normal coordinate system given by $u\in \ba_x$, then $\psi\varphi\psi^{-1}$ is in general a dif\/ferent automorphism such that $\un{\psi\varphi\psi^{-1}}$ has coordinates $\Ad(s)\in {\rm Gl}(\fg_-)$ in the normal coordinate system given by $ug_0\in \ba_x$. On the other hand, the second author proved in \cite[Section~5]{LZ2} that on some parabolic contact geometries, there is at most one $s$-symmetry at a~point $x$ with a~non-zero harmonic curvature for $s\in Z(G_0)$ such that $\Ad(s)|_{\fg_{-1}}=-\id$. We prove in this article that this holds for all parabolic contact geometries.

We classif\/ied in \cite{GZ3} all elements $s\in Z(G_0)$ that can appear as coordinates of underlying dif\/feomorphisms of automorphisms of parabolic geometries in a normal coordinate system at a~point with a~non-zero harmonic curvature. For example, we have found out that for complex $|1|$-graded parabolic geometries with a harmonic curvature of homogeneity~$3$, we have to consider elements $s\in Z(G_0)$ such that $s^3=\id$. Moreover, we constructed in \cite[Proposition~6.1]{GZ2} and \cite[Proposition~7.2]{GZ3} examples of such parabolic geometries on $\mathbb{Z}_3$-symmetric spaces, which are generalizations of symmetric spaces that are studied in~\cite{Kowalski}.

In fact, there are many known examples of (locally) $s$-symmetric parabolic geometries. Each locally f\/lat parabolic geometry is locally $s$-symmetric for each $s\in Z(G_0)$. We classif\/ied in \cite{GZ3} the elements $s\in Z(G_0)$ for which all locally $s$-symmetric parabolic geometries are f\/lat. Further, we showed in \cite[Proposition 6.1]{GZ2} that all submaximally symmetric parabolic geometries constructed in \cite[Section 4.1]{KT} are locally $s$-symmetric parabolic geometries for elements $s\in Z(G_0)$ that do not impose f\/latness. Let us emphasize that some of these examples carry more than one $s$-symmetry at each point and explicit examples can be found in \cite[Section 6]{GZ2}.
This shows that the results we obtain in this article do not hold for all types of parabolic geometries.
There are also further examples of (locally) $s$-symmetric parabolic geometries in \cite{BFG,G2,G3,GZ4,Podesta}.

Let us now summarize our main results for (local) $s$-symmetries and (locally) $s$-symmetric parabolic geometries we obtain in this article. The f\/irst main result states that there is a large class of types of parabolic geometries whose algebraic structure enforces uniqueness of (local) $s$-symmetries at points with a non-zero harmonic curvature. We characterize these types in a~way that is related to the theory of prolongations of annihilators of the harmonic curvature and the prolongation rigidity from \cite[Section~3.4]{KT} as follows.

\begin{Definition}\label{prolong} \label{prolongation-rigid}
Let $\mu$ be a component of the harmonic curvature (irreducible as a $G_0$-submodule of $H^2(\fg_-,\fg)$) of regular normal parabolic geometries of type $(G,P)$.
\begin{enumerate}\itemsep=0pt
\item For $\phi\in \mu$, let us denote by
\begin{gather*}\operatorname{ann}(\phi):=\{A\in \fg_0\colon A. \phi=0\}\end{gather*}
the \emph{annihilator} of $\phi$ in $\fg_{0}$. We def\/ine the \emph{$i$th prolongation of the annihilator} of $\phi$ as
\begin{gather*}\operatorname{pr}(\phi)_i=\{Z\in \fg_{i}\colon \ad(X_1)\cdots \ad(X_i)(Z)\in \operatorname{ann}(\phi) \ \rm{ for\ all\ } X_1,\dots, X_i\in\fg_{-1}\}.\end{gather*}
\item For $s\in Z(G_0)$, we say that the triple $(\fg,\fp,\mu)$ is \emph{prolongation rigid outside of the $1$-eigen\-space of $s$} if for all weights $\phi\in \mu$, all prolongations of the annihilator of $\phi$ in $\fg_0$ are contained in the $1$-eigenspace of $s$.
\end{enumerate}
\end{Definition}

We will see that there are triples $(\fg,\fp,\mu)$ that are prolongation rigid outside of the $1$-eigenspace of $s$ only for some $s\in Z(G_0)$. In particular, such triples $(\fg,\fp,\mu)$ are not prolongation rigid. Indeed, a triple $(\fg,\fp,\mu)$ is prolongation rigid if and only if it is prolongation rigid outside of the $1$-eigenspace of~$s$ for all $s\in Z(G_0)$.

In Section \ref{sec4.2}, we show how to classify all triples $(\fg,\fp,\mu)$ that are prolongation rigid outside of the $1$-eigenspace of $s$ for some $s\in Z(G_0)$ using the results in \cite{KT}. The following Theorem shows that for our purposes, it is enough to carry out the classif\/ication only for the compo\-nents~$\mu$ that are contained in the $1$-eigenspace of $s$.

\begin{Theorem}\label{t1.2}
Consider a triple $(\fg,\fp,\mu)$.

If $s\in Z(G_0)$ is such that $\mu$ is not contained in the $1$-eigenspace of $s$, then there is no $($local$)$ $s$-symmetry of a parabolic geometry of type $(G,P)$ at each point $x$ with a non-zero component of the harmonic curvature in~$\mu$.

If $s\in Z(G_0)$ is such that $(\fg,\fp,\mu)$ is prolongation rigid outside of the $1$-eigenspace of $s$, then there is at most one $($local$)$ $s$-symmetry of a parabolic geometry of type $(G,P)$ at the point~$x$ with a non-zero component of the harmonic curvature in~$\mu$.
\end{Theorem}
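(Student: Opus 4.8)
The plan is to analyze the infinitesimal/algebraic obstruction carried by an $s$-symmetry $s_u$ at a point $x=p(u)$ with $\ka_H(u)_\mu\neq 0$, where $\ka_H(u)_\mu$ denotes the $\mu$-component of the harmonic curvature.

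First I would exploit naturality of the harmonic curvature under automorphisms. Since $s_u$ is required to satisfy $s_u^*\om=\om$, it is a local automorphism, and harmonic curvature is an invariant of the geometry; hence $\ka_H(s_u(u))=\ka_H(u)$ as functions $\ba\to H^2(\fg_-,\fg)$. On the other hand, the defining formula $s_u(u)=us$ forces the value at the image to be related to the value at $u$ by the $G_0$-action of $s$ on the module $H^2(\fg_-,\fg)$, i.e.\ $\ka_H(us)=s^{-1}.\ka_H(u)$. Combining these gives the equivariance identity $s.\ka_H(u)=\ka_H(u)$ in $H^2(\fg_-,\fg)$, so $\ka_H(u)$ is fixed by $s$. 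Projecting to the $\mu$-component, $\ka_H(u)_\mu$ lies in the $1$-eigenspace of $s$ on $\mu$. If $\mu$ is \emph{not} contained in the $1$-eigenspace of $s$, then $s$ acts on the irreducible $G_0$-module $\mu$ by a nontrivial scalar (irreducibility forces the action of the central element $s$ to be scalar), so the only fixed vector is $0$; this contradicts $\ka_H(u)_\mu\neq 0$ and proves the first assertion.

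For the second assertion I would set up a comparison between two $s$-symmetries $s_u$ and $s_{u'}$ at the same point $x$, with $u'=ug$ for some $g\in P$. Writing $g=g_0\exp(Z_+)$ with $g_0\in G_0$ and $Z_+\in\fp_+$ in the Levi decomposition, the task is to show $g$ can be taken trivial, i.e.\ that the fixed point datum together with prolongation rigidity outside the $1$-eigenspace of $s$ pins down $u$ up to the freedom that produces the \emph{same} symmetry. The key computation is to compare the two symmetry prescriptions on the curvature function and its covariant derivatives along the fibre: the hypothesis $\ka_H(u)_\mu\neq 0$ means the annihilator $\operatorname{ann}(\ka_H(u)_\mu)\subseteq\fg_0$ is proper, and the prolongation rigidity assumption says that the full prolongation $\bigoplus_i \operatorname{pr}(\ka_H(u)_\mu)_i$ of this annihilator sits inside the $1$-eigenspace of $s$. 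The plan is to show that the ambiguity in choosing $u$ (equivalently, the Lie algebra of the stabilizer of the symmetry through $x$) is governed precisely by this prolonged annihilator, so that any two $s$-symmetries differ by an element of $\exp$ of something in the $1$-eigenspace of $s$, which by the defining formula $s_u(\Fl_1^{\omi(X)}(u))=\Fl_1^{\omi(\Ad(s)(X))}(u)s$ acts trivially on the symmetry itself, forcing $s_{u'}=s_u$.

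The hard part will be the second step: converting the prolongation-rigidity condition into genuine uniqueness. I expect the main obstacle to be controlling the $\exp(\fp_+)$-ambiguity, since the text already emphasizes that the action of $\exp(\fp_+)$ on normal coordinates is highly non-linear, so the naive frame-bundle argument used for affine connections does not transfer directly. The technical heart is to show that if $s_u$ and $s_{ug}$ are both $s$-symmetries at $x$, then the element $g$ must satisfy $\Ad(g)$-compatibility conditions whose infinitesimal version places $\log g$ in successive prolongations $\operatorname{pr}(\ka_H(u)_\mu)_i$ of the annihilator; prolongation rigidity outside the $1$-eigenspace of $s$ then confines $\log g$ to the $1$-eigenspace, where $\Ad(g)$ commutes with $\Ad(s)$ and the symmetry is unchanged. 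I would carry this out by differentiating the symmetry condition $s_u^*\om=\om$ along the fibre direction and reading off, degree by degree in the grading, the constraints on $g$, invoking the annihilator prolongation at each homogeneity. This inductive degree-by-degree argument is where the real work lies.
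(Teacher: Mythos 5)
Your first claim reproduces the paper's argument exactly: naturality gives $\un s_u^*\ka_H=\ka_H$, the defining relation $s_u(u)=us$ converts this into $s.\ka_H(u)=\ka_H(u)$, and since $s\in Z(G_0)$ acts on the irreducible module $\mu$ by a single scalar, a non-zero $\mu$-component forces that scalar to be $1$. Your skeleton for the second claim is also the paper's: encode the discrepancy between two $s$-symmetries at $x$ in graded pieces, show the lowest piece lies in $\operatorname{pr}(\ka_H(u))_i$, invoke prolongation rigidity to push it into the $1$-eigenspace of $s$, and observe that when the $\fp_+$-part of the difference is fixed by $\Ad(s)$ the element $s$ commutes with it, so the two bundle morphisms agree at one point and hence coincide.

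The genuine gap is that the only step carrying real content --- placing the difference in the prolongation of the annihilator --- is announced but not carried out, and the method you announce would fail. Differentiating $s_u^*\om=\om$ along the fibre yields nothing beyond the universal $P$-equivariance of a Cartan connection (reproduction of fundamental vector fields holds for every $\om$), so no condition involving $\ka_H(u)$, and hence no annihilator, can emerge from fibre derivatives alone; the constraint must come from base-direction derivatives of the curvature. Concretely, the paper (Propositions \ref{auto-action} and \ref{cor-ws}) attaches to each symmetry a $1$-form $\U^{\si,s_u}$ whose value at $x$ has the BCH shape $C(-\Ad(s)^{-1}(Y),Y)$ --- so its lowest graded component $Z_i$ automatically has trivial part in $\fg_i^s(1)$ --- chooses $\si$ adapted to the second symmetry (so $\U^{\si,s_v}(x)=0$), and compares iterated Weyl-connection derivatives $(\nabla^\si)^j_{\xi^1,\dots,\xi^j}\ka_H(u)$ with arguments in eigenspaces $\fg_{-1}^s\big(\frac{1}{a_b}\big)$: equivariance of $\ka_H$ under $\un s_u$ produces the factor $a_1\cdots a_j$, while the change-of-Weyl-structure formula together with the fundamental-derivative computation $\big((\nabla^\si)^j\U^{\si,s_u}\big)_{\si(\pi(u))}=(-1)^j\ad\big(X^j\big)\cdots\ad\big(X^1\big)(Z_i)$ produces a curvature-action correction, and comparing the two evaluations yields $(-1)^j(a_1\cdots a_j-1)(\nabla^\si)^j\ka_H(u)=\ad\big(X^j\big)\cdots\ad\big(X^1\big)(Z_i).\ka_H(u)$; running the same identity for $s_v$ kills the left-hand side whenever $a_1\cdots a_j\neq 1$, which is what forces $Z_i\in\operatorname{pr}(\ka_H(u))_i$. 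Note moreover that rigidity alone only places $Z_i$ in $\fg_i^s(1)$; to conclude $Z_i=0$ and run the induction up the grading you need precisely the BCH normalization above, which says $Z_i=(\id-\Ad(s)^{-1})(Y_i)$ has no $1$-eigenspace component. Both ingredients --- the Weyl-connection comparison that actually produces the annihilator conditions, and the normalization excluding the $1$-eigenspace at the lowest degree --- are missing from your plan, so the inductive argument you defer to does not close as described.
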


We proved Theorem \ref{t1.2} in \cite[Theorem 1.3]{GZ3} under the assumption that the parabolic geometry is homogeneous, but we can also easily construct non-homogeneous (locally) $s$-symmetric parabolic geometries of type $(G,Q)$ for certain triples $(\fg,\fq,\mu)$ that are prolongation rigid outside of the $1$-eigenspace of $s$. It suf\/f\/ices to consider correspondence spaces for parabolic subgroups $Q\subset P\subset G$ over (locally) $s$-symmetric parabolic geometries of type $(G,P)$ for $(\fg,\fp,\mu)$ that is prolongation rigid outside of the $1$-eigenspace of $s$, see \cite[Proposition 6.1]{GZ3}. We prove Theorem~\ref{t1.2} in Section~\ref{sec3.1}.

Let us now focus on (locally) $s$-symmetric parabolic geometries. We say that a map $S$ that picks a (local) $s$-symmetry at each point of~$M$ is a system of (local) $s$-symmetries. In general, systems of (local) $s$-symmetries are neither smooth nor unique. The conditions in Theorem~\ref{t1.2} can be used to prove the uniqueness of a system of (local) $s$-symmetries.

Our second main result concerns the conditions for the smoothness of a system of (local) $s$-symmetries. We consider the following generalization of af\/f\/ine locally symmetric spaces. There is a class of Weyl connections on each parabolic geometry playing a signif\/icant role in the theory of parabolic geometries, see \cite[Chapter 5]{parabook} and Section \ref{sec2.1}. Each Weyl connection is given by a reduction of~$\ba$ to $G_0$, i.e., by a smooth $G_0$-equivariant section $\si$ of the projection from~$\ba$ to~$\ba_0:=\ba/\exp(\fp_+)$. The sections $\si$ are called Weyl structures and we denote by $\nabla^\si$ the Weyl connection given by the Weyl structure~$\si$. Each point of $\si(\ba_0)_x$ def\/ines a dif\/ferent frame of~$T_xM$. However, the (local) dif\/feomorphism with coordinates $\Ad(s)\in {\rm Gl}(\fg_-)$ in a normal coordinate system of a Weyl connection~$\nabla^\si$ given by a~frame $\si(u_0)\in \si(\ba_0)_x$ is independent of the actual choice of $u_0\in (\ba_0)_x$, see Section~\ref{sec4.2}. We denote such a (local) dif\/feomorphism by $s_x^\si$.

If we choose a class of Weyl connections satisfying $T_xs_x^\si=T_xs_x^{\si'}$ for all Weyl connec\-tions~$\nabla^\si$,~$\nabla^{\si'}$ in the class and all $x\in M$, then the tangent bundle~$TM$ has a common decomposition into smooth subbundles according to the eigenvalues of $T_xs_x^\si$ for all $\nabla^\si$ in the class. We can further consider a subclass $[\nabla]$ of such a class of Weyl connections that restrict to the same partial connection on all smooth subbundles of $TM$ for all eigenvalues of $T_xs_x^\si$ dif\/ferent from $1$. Such a subclass $[\nabla]$ is equivalently characterized by the condition that the $1$-forms~$\U$ measuring the `dif\/ferences' (see the formula~(\ref{change-wc})) between arbitrary two connections in~$[\nabla]$ satisfy $(s_x^\si)^*\U(x)=\U(x)$ for all $x\in M$ and some (and thus all) Weyl connections $\nabla^\si\in [\nabla]$. In general, (local) dif\/feomorphisms $s_x^\si$ are dif\/ferent for dif\/ferent Weyl connections $\nabla^\si\in [\nabla]$. Therefore we can consider smooth maps $\un S$ assigning some of these dif\/feomorphisms to each $x\in M$. Equivalently we can directly assign to each $x\in M$ the Weyl structure $\si$ def\/ining $s_x^\si$.

\begin{Definition} \label{almost-invariant-ws}
Let $[\nabla]$ be a maximal subclass of the class of Weyl connections satisfying that
\begin{itemize}\itemsep=0pt
\item $T_xs_x^\si=T_xs_x^{\si'}$ holds for all Weyl connections $\nabla^\si,\nabla^{\si'}\in [\nabla]$ and all $x\in M$,
\item all connections in $[\nabla]$ restrict to the same partial connection on all smooth subbundles of~$TM$ for all eigenvalues of $T_xs_x^\si$ dif\/ferent from~$1$.
\end{itemize}
Let $\un S$ be a smooth map assigning to each $x\in M$ the (local) dif\/feomorphism $s_x^\si$ for some Weyl connection~$\nabla^\si$ (depending on~$x$) in~$[\nabla]$.
\begin{enumerate}\itemsep=0pt
\item The class $[\nabla]$ is called \emph{$\un S$-invariant} if $\un S(x)^*\nabla^\si\in [\nabla]$ and $\un S(x)^*\nabla^\si(x)=\nabla^\si(x)$ hold for some (and thus each) Weyl connection $\nabla^\si\in [\nabla]$ and all $x\in M$.
\item Weyl connections $\nabla$ in the $\un S$-invariant class $[\nabla]$ are called \emph{almost $\un S$-invariant Weyl connection.}
\item The almost $\un S$-invariant Weyl connection $\nabla$ is called \emph{invariant at} $x\in M$ if $\un S(x)^*\nabla=\nabla$.
\item The almost $\un S$-invariant Weyl connection $\nabla$ is called \emph{$\un S$-invariant} if $\un S(x)^*\nabla=\nabla$ holds for all $x\in M$.
 \end{enumerate}
\end{Definition}

We show in Section \ref{sec4.1} that if there is an almost $\un S$-invariant Weyl connection, then each~$\un S(x)$ preserves $(\ba\to M,\om)$, i.e., $\un S$ def\/ines a smooth system $S$ of (local) $s$-symmetries such that~$\un{S}(x)$ are the underlying (local) dif\/feomorphisms of $S(x)$ for all $x$. Thus the notation $\un S$ is consistent with Def\/inition~\ref{def3}.

If there is a smooth system $S$ of (local) $s$-symmetries of $(\ba\to M,\om)$, then we need the prolongation rigidity outside of the $1$-eigenspace of $s$ to show the existence of an $\un S$-invariant class of Weyl connections, see Section~\ref{sec4.3}. For all $|1|$-graded parabolic geometries and $s$ such that $\Ad(s)=-\id_{\fg_-}$, i.e., $s=m$, we obtain af\/f\/ine (locally) symmetric spaces, because the class $[\nabla]$ consists of a single connection. For all parabolic contact geometries and $s$ such that $\Ad(s)|_{\fg_{-1}}=-\id$ we obtain ref\/lexion spaces, but the $\un S$-invariant class $[\nabla]$ is not the class of admissible connections from \cite{Loos}.

For triples $(\fg,\fp,\mu)$ that are prolongation rigid outside of the $1$-eigenspace of $s$, we get the following existence result, which in particular implies Theorem \ref{t1.1}.

\begin{Theorem}\label{t1.3}
Suppose $s\in Z(G_0)$ is such that $(\fg,\fp,\mu)$ is prolongation rigid outside of the $1$-eigenspace of $s$. Suppose that the parabolic geometry $(\ba\to M,\om)$ of type $(G,P)$ has everywhere non-zero component of the harmonic curvature in~$\mu$. Then the following conditions are equivalent:
\begin{enumerate}\itemsep=0pt
\item[$1.$] The parabolic geometry is $($locally$)$ $s$-symmetric.
\item[$2.$] There is a smooth system $S$ of $($local$)$ $s$-symmetries.
\item[$3.$] There is an $\un S$-invariant class $[\nabla]$ of Weyl connections.
\end{enumerate}
Moreover, the smooth system $S$ is unique and $\un S$ consists of the underlying diffeomorphisms of~$S$ on~$M$. The equality $S(x) \circ S(y)=S(\un S(x)(y)) \circ S(x)$ holds whenever the compositions are defined. If $\Ad(s)\in {\rm Gl}(\fg_-)$ has no eigenvalue $1$, then $[\nabla]$ consists of a single $\un S$-invariant Weyl connection, which is locally affinely homogeneous.
\end{Theorem}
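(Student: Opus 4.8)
The plan is to establish the implications $(3)\Rightarrow(2)$, $(2)\Rightarrow(3)$, $(2)\Rightarrow(1)$ and $(1)\Rightarrow(2)$, which together give the equivalence, and then to read off the ``moreover'' assertions. Two of the links are supplied by the later sections I may assume: by Section~\ref{sec4.1} an $\un S$-invariant class $[\nabla]$ contains an almost $\un S$-invariant Weyl connection, and this forces every $\un S(x)$ to preserve $(\ba\to M,\om)$, so $(3)\Rightarrow(2)$; conversely Section~\ref{sec4.3} uses prolongation rigidity outside the $1$-eigenspace of $s$ to build an $\un S$-invariant class from a smooth system, giving $(2)\Rightarrow(3)$. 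The implication $(2)\Rightarrow(1)$ is immediate. Before the main step I would record two reductions. First, if $\mu$ is not contained in the $1$-eigenspace of $s$, then by Theorem~\ref{t1.2} there is no (local) $s$-symmetry at any point with non-zero $\mu$-component, so all three conditions fail and the equivalence holds vacuously; hence I assume $\mu$ lies in the $1$-eigenspace. Second, since the $\mu$-component is non-zero everywhere and $(\fg,\fp,\mu)$ is prolongation rigid outside the $1$-eigenspace of $s$, Theorem~\ref{t1.2} gives \emph{at most one} (local) $s$-symmetry at each point; combined with $(1)$ this pins down a unique set-theoretic system $x\mapsto S(x)$ with underlying diffeomorphisms $\un S(x)$, which already yields the asserted uniqueness.

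The heart of the argument, and the step I expect to be the main obstacle, is $(1)\Rightarrow(2)$: promoting this unique system to a smooth one. A (local) automorphism is determined by its value at a single point of $\ba$, since $\om$ is an absolute parallelism; thus $S$ is smooth as soon as $u\mapsto S(p(u))(u)$ is smooth on $\ba$, and by $P$-equivariance this reduces to the smoothness of a section $x\mapsto u_x\in\ba_x$, where $u_x$ is a frame in which $\un S(x)$ has coordinates $\Ad(s)$. The plan is to describe such frames through Weyl structures: the diffeomorphism $s_x^\si$ depends smoothly on $(x,\si)$, and the requirement that $s_x^\si$ be an automorphism of $(\ba\to M,\om)$ is an algebraic-differential condition on $\si$. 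I would show that prolongation rigidity outside the $1$-eigenspace, together with the non-vanishing of the $\mu$-component of the harmonic curvature, forces the $1$-eigenspace part of the admissible Weyl structures to be uniquely determined at each point while leaving free exactly the directions describing the class $[\nabla]$; consequently the admissible frames form a smooth subbundle of $\ba$ admitting smooth local sections. Each local section gives a smooth local system, and by the pointwise uniqueness these local systems agree on overlaps and assemble into the global smooth system $S$. This is precisely where both hypotheses are indispensable.

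Given smoothness and uniqueness, the composition law follows formally. If $S(y)=s_u$ with $u\in\ba_y$ and $\va:=S(x)$, then since $\va^*\om=\om$ forces $\va$ to commute with the flows $\Fl_1^{\omi(X)}$ and $\va$ is $P$-equivariant, one computes $\va\, s_u\,\va^{-1}=s_{\va(u)}$ with $\va(u)\in\ba_{\un S(x)(y)}$. Hence $S(x)\circ S(y)\circ S(x)^{-1}$ is again a (local) $s$-symmetry, now based at $\un S(x)(y)$; by the pointwise uniqueness it equals $S(\un S(x)(y))$, which rearranges to $S(x)\circ S(y)=S(\un S(x)(y))\circ S(x)$ whenever the compositions are defined.

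Finally, suppose $\Ad(s)\in{\rm Gl}(\fg_-)$ has no eigenvalue $1$. Then the derivative $T_xs_x^\si$ has none either, so the only $1$-form fixed by $(s_x^\si)^*$ is the zero form; by the characterization of $[\nabla]$ through the forms $\U$ of~(\ref{change-wc}) recalled before Definition~\ref{almost-invariant-ws}, the class $[\nabla]$ collapses to a single Weyl connection $\nabla$, which is then $\un S$-invariant. To obtain local affine homogeneity I would differentiate the smooth map $y\mapsto \un S(y)(x_0)$ at $y=x_0$: in a normal coordinate system of $\nabla$ centred at $x_0$ its derivative is $\id-\Ad(s)|_{\fg_-}$, invertible precisely because $1$ is not an eigenvalue. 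Thus this map is a local diffeomorphism near $x_0$, so every nearby point is the image of $x_0$ under some $\un S(y)$; as each $\un S(y)$ is an affine transformation of the $\un S$-invariant connection $\nabla$, the local affine automorphisms act locally transitively, i.e.\ $(M,\nabla)$ is locally affinely homogeneous.
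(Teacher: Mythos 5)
The decisive step---your implication $(1)\Rightarrow(2)$---is a plan rather than a proof, and its one substantive sentence is wrong as stated. You say prolongation rigidity ``forces the $1$-eigenspace part of the admissible Weyl structures to be uniquely determined at each point while leaving free exactly the directions describing the class $[\nabla]$''; this is inverted and self-contradictory, since the free directions that parametrize $[\nabla]$ \emph{are} the $1$-eigenspace directions (two almost $S$-invariant Weyl structures differ by a function with values in $\fp_+^s(1)$, Claim~(2) of Proposition~\ref{almost-ws}), while it is the components \emph{outside} the $1$-eigenspace that get pinned down. More importantly, you never exhibit the mechanism that pins them down, and your reduction to ``the admissible frames form a smooth subbundle of $\ba$'' is circular: smoothness in $x$ of the set $\{v\in\ba_x\colon S(x)(v)=vs\}$ is exactly the smoothness you are trying to prove. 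The paper's mechanism is the computation behind Proposition~\ref{cor-ws}: writing $S(p_0(u_0))^*\hat\si(u_0)=\hat\si(u_0)\exp(-(S)_{\hat\si}(u_0))$ for an arbitrary smooth Weyl structure $\hat\si$ and decomposing $(S)_{\hat\si}=\sum_a\tau_i(a)+\dots+\sum_a\tau_k(a)$ by grading and eigenvalue, Claim~(3) of Proposition~\ref{auto-action} gives $\tau_i(1)\equiv0$, and the identity~(\ref{gl--zavorka}) expresses $\ad(X^j)\cdots\ad(X^1)(Z_i).\kappa_H$ through $(a_1\cdots a_j-1)(\nabla^{\hat\si})^j_{\xi^1,\dots,\xi^j}\kappa_H$; prolongation rigidity outside the $1$-eigenspace makes $Z_i\mapsto \ad(X^j)\cdots\ad(X^1)(Z_i).\kappa_H$ injective on $\oplus_{a\neq1}\fg_i^s(a)$, so each $\tau_i(a)$, $a\neq1$, is the pointwise-unique solution of a linear system with smooth data, hence smooth. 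One then removes these components by the BCH-normalization $\U_i:=\sum_{a\neq1}\frac{a}{a-1}\tau_i(a)$, and finitely many iterations yield a smooth Weyl structure $\si$ with $(S)_\si\equiv0$, which delivers smoothness of $S$ and the $\un S$-invariant class in one stroke. Without some version of this curvature-derivative argument your $(1)\Rightarrow(2)$ does not close; note also that Section~\ref{sec4.3} (Proposition~\ref{almost-ws}) assumes only condition~(1) and \emph{derives} smoothness, i.e., it proves $(1)\Rightarrow(3)$ directly, contrary to your reading of it as merely $(2)\Rightarrow(3)$.

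The remaining pieces are fine and largely match the paper. The vacuous-case reduction via Theorem~\ref{t1.2}, the pointwise uniqueness, and $(3)\Rightarrow(2)$ via Proposition~\ref{geodesic-s-symm} are as in the paper; your conjugation argument $\va\, s_u\,\va^{-1}=s_{\va(u)}$ from~(\ref{autpos}) combined with uniqueness is precisely how Claim~(5) of Proposition~\ref{almost-ws} is proved. For the last claim, collapsing $[\nabla]$ to a single connection via $\fp_+^s(1)=0$ is correct, and your differentiation of $y\mapsto\un S(y)(x_0)$, with differential $\id-T_{x_0}\un S(x_0)$ invertible since $1$ is not an eigenvalue, is exactly the content of the paper's Lemma~\ref{infaut}. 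The paper itself proves local affine homogeneity in Theorem~\ref{6.2} by a different route (the unique invariant connection has parallel, $s$-invariant torsion and curvature, then \cite[Lemma~2.2]{GZ2} produces the affine transformations); your transitivity argument is a legitimate and more elementary alternative, but it only becomes available once smoothness of $S$---the missing step above---is in hand.
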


We prove the claims of Theorem \ref{t1.3} except the last one in Section~\ref{sec3}. The last claim does not hold without additional assumptions on the $1$-eigenspace. We prove the last claim in Section~\ref{sec6}, where we study additional properties that follow from assumptions on the position and shape of the $1$-eigenspace of $s$ in~$\fg_-$.

{\bf Outline of the article.} We recall basic facts and formulas for Weyl connections in Section~\ref{sec2}. In particular, we characterize automorphisms of parabolic geometries with their actions on Weyl structures. We recall the relation between normal coordinates and normal Weyl structures.

In Section~\ref{SEC3}, we prove Theorem~\ref{t1.2} and we provide the characterization of the triples $(\fg,\fp,\mu)$ that are prolongation rigid outside of the $1$-eigenspace of~$s$.

In Section \ref{sec3}, we prove Theorem~\ref{t1.3}. We also obtain further properties of (locally) $s$-symmetric parabolic geometries of type $(G,P)$ that have everywhere non-zero component of the harmonic curvature in $\mu$ for the triples $(\fg,\fp,\mu)$ that are prolongation rigid outside of the $1$-eigenspace of~$s$.

In Section \ref{sec6}, we classify all triples $(\fg,\fp,\mu)$ that are prolongation rigid outside of the $1$-eigenspace of $s$ such that $\mu$ is in the $1$-eigenspace of $s$. The classif\/ication is separated in the tables according to the common properties of the triples $(\fg,\fp,\mu)$ and elements $s\in Z(G_0)$. The notation for the tables and details on the classif\/ication can be found in Section~\ref{sec61}. We show in Section~\ref{sec5.1} that there are triples $(\fg,\fp,\mu)$ for which the $\un S$-invariant class $[\nabla]$ of Weyl connections consists of a single $\un S$-invariant Weyl connection. In particular, such an $\un S$-invariant Weyl connection is always (locally) af\/f\/inely homogeneous. In Sections~\ref{sec5.2} and~\ref{sec5.4}, we show that there are triples $(\fg,\fp,\mu)$ for which the (locally) $s$-symmetric parabolic geometries are locally correspondence spaces over some other $s$-symmetric parabolic geometries. In Section \ref{sec5.3}, we prove that there are triples $(\fg,\fp,\mu)$ for which the condition of (local) homogeneity is satisf\/ied for more complicated $\un S$-invariant class~$[\nabla]$ of Weyl connections.

In the Appendix \ref{sec5} we recall from~\cite{GZ2} the construction of (locally) homogeneous $s$-symmetric parabolic geometries that we need in Section~\ref{sec6}.

\section{Automorphisms of parabolic geometries} \label{sec2}
In this Section, we introduce necessary techniques and establish notation from the theory of parabolic geometries that we will use in the article, see \cite[Section 5.1]{parabook}. We focus here on actions of automorphisms on Weyl structures and connections.

\subsection{Weyl structures and connections}\label{sec2.1}

Consider a parabolic geometry $(\ba \to M, \om)$ of type $(G,P)$. Many geometric objects on $M$ can be identif\/ied with sections of natural bundles $\mathcal{V}$ associated to the $P$-bundle $\ba$ for representa\-tions~$V$ of~$P$. We can equivalently view the sections of~$\mathcal{V}$ as $P$-equivariant functions $\ba \to V$. In other words, the points of $\ba$ are (higher-order) frames and the $P$-equivariant functions are the coordinate functions. A~crucial tool that allows us to reduce the number and order of the frames are Weyl structures. A~\emph{$($local$)$ Weyl structure} is a (local) $G_0$-equivariant section $\si\colon \ba_0 \to \ba$ of the projection $\pi\colon \ba \to \ba_0$, where $\ba_0:=\ba /\exp(\fp_+)$ and $p_0\colon \ba_0\to M$ is a $G_0$-bundle over~$M$.

\begin{Definition}
Assume $\si\colon \ba_0\to \ba$ is a Weyl structure. Then for a section $\tau$ of a~natural bundle $\mathcal{V}$, we denote by $(\tau)_\si$ the $G_0$-equivariant function $\ba_0 \to V$ satisfying \begin{gather*}(\tau)_\si:=t\circ\si,\end{gather*} where $t\colon \ba \to V$ is the $P$-equivariant function corresponding to $\tau$.
\end{Definition}

In particular, vector f\/ields $\xi$ and $1$-forms $\U$ on $M$ are sections of bundles $\ba\times_P \fg/\fp$ and $\ba\times_P \fp_+$, respectively, and there are corresponding $G_0$-equivariant functions $(\xi)_\si\colon \ba_0\to \fg_-$ and $(\U)_\si\colon \ba_0\to \fp_+$.

Weyl structures always exist on parabolic geometries and for each two Weyl structures~$\si$ and~$\hat \si$, there exist a $1$-form~$\U$ and $G_0$-equivariant functions $\U_i\colon \ba_0 \to \fg_i$ for $i=1,\dots, k$ such that
\begin{gather*}
\hat \si=\si\exp(\U)_\si=\si\exp(\U_1) \cdots \exp(\U_k).
\end{gather*}
The $G_0$-equivariant function $(\U)_\si\colon \ba_0 \to \fp_+$ is related to the functions $\U_i$ via the Baker--Campbell--Hausdorf\/f (BCH)-formula.

We can decompose the pullback $\si^*\om\colon T\ba_0\to \fg$ into $G_0$-equivariant $1$-forms \smash{$\om_{i}^\si\colon T\ba_0\to \fg_i$} according to the grading $\fg_i$ of~$\fg$. These forms clearly depend on the choice of the Weyl struc\-tu\-re~$\si$. For a Weyl structure $\hat \si=\si \exp (\U)_\si$, there is the following formula describing the change of the forms
\begin{gather} \label{change-form}
\om_{l}^{\si\exp(\U)_\si}=\sum_{|i|+j=l} \frac{(-1)^{i}}{i!} \big( \ad(\U_k)^{i_k} \circ \cdots \circ \ad(\U_1)^{i_1}\big)\circ \om_{j}^\si,
\end{gather}
where we write $i!=i_1!\cdots i_k!$, $|i|=i_1+2i_2+ \cdots +ki_k$ and $(-1)^i=(-1)^{i_1+\cdots+i_k}$ for the multi-index $i=(i_1, \dots, i_k)$ with $i_1, \dots, i_k \geq 0$.

The sum $\om_-^\si$ of the forms $\om_{i}^\si$ for $i<0$ is called the \emph{soldering form} given by the Weyl structu\-re~$\si$. Suppose $(\xi)_{\si}=\xi_{-k}+ \cdots+ \xi_{-1}$ holds for the vector f\/ield $\xi$ on $M$ and for $G_0$-equivariant functions $\xi_i\colon \ba_0\to \fg_i$. If $(\xi)_{\si \exp (\U)_\si}=\hat \xi_{-k}+\cdots+\hat \xi_{-1}$ holds for $G_0$-equivariant functions $\hat \xi_i: \ba_0\to \fg_i$ and the Weyl structure $\si\exp (\U)_\si$, then
\begin{gather*}
\hat \xi_l=\sum_{|i|+j=l} \frac{(-1)^i}{i!} \ad(\U_k)^{i_k} \circ \cdots \circ \ad(\U_1)^{i_1}.\xi_{j},
\end{gather*}
where $.$ is the algebraic action of the values of functions $\ba_0\to \fp_+$ on the values of the functions $\ba_0\to \fg_-$.

The form $\om_{0}^\si$ is a principal connection form on $\ba_0$. Suppose that the f\/inite-dimensional representation of $P$ on $V$ is completely reducible as a representation of $G_0$. Then
\begin{enumerate}\itemsep=0pt
\item[1)] the form $\om_{0}^\si$ induces a linear connection $\nabla^\si$ on the space of $P$-equivariant functions $\ba\to V$,
\item[2)] for each $P$-equivariant function $\tau\colon \ba\to V$, the connection $\nabla^\si$ preserves the decomposition of $(\tau)_\si$ into $G_0$-equivariant components.
\end{enumerate}

The induced connections $\na^\si\!$ on $\mathcal{V}$ are called \emph{Weyl connections}. The Weyl connection $\na^{\si\exp (\U)_\si}\!\!$ on~$\mathcal{V}$ is related to the Weyl connection $\na^\si$ on $\mathcal{V}$ by
\begin{gather} \label{change-wc}
\big(\na^{\si\exp (\U)_\si }_\xi\tau\big)_\si =(\na^\si_\xi\tau)_\si + \sum_{|i|+j=0} \frac{(-1)^{i}}{i!} \big( \ad(\U_k)^{i_k} \circ \cdots \circ \ad(\U_1)^{i_1}(\xi_j)\big). (\tau)_\si,
\end{gather}
where $\tau$ is a section of $\mathcal{V}$ and $.$ is the algebraic action of the values of functions $\ba_0\to \fg_0$ on the values of the function $(\tau)_\si\colon \ba_0\to V$.

The soldering form $\om_-^\si$ together with the principal connection form $\om_{0}^\si$ form the Cartan connection $\om_-^\si\oplus\om_{0}^\si$ on $\ba_0$ of a reductive type. In fact, we can view the f\/irst-order frame bund\-le~$\mathcal{P}^1M$ as the bundle $\ba\times_{\un \Ad}{\rm Gl}(\fg/\fp)$ for the adjoint action $\un \Ad$ of $P$ on $\fg/\fp$. Moreover, each Weyl structure $\si$ provides a reduction $\iota_\si\colon \ba_0\to\mathcal{P}^1M$ over $\un \Ad\colon G_0\to {\rm Gl}(\fg/\fp)$ such that
\begin{gather*}\iota_\si^*\theta=\om_-^\si \qquad \text{and} \qquad \iota_\si^*\gamma_\si=\om_{0}^\si\end{gather*}
hold for the natural soldering form $\theta$ on $\mathcal{P}^1M$ and the principal connection form $\gamma_\si$ on $\mathcal{P}^1M$ of the Weyl connection $\nabla^\si$. This allows us to describe explicitly geodesics of Weyl connections. The geodesic of the Weyl connection $\nabla^\si$ on $TM$ through $x$ in the direction $\xi(x) \in T_xM$ is the curve \begin{gather}\label{geod}
p_0 \circ \Fl^{(\om_-^\si\oplus\om_{0}^\si)^{-1}(\xi(x))_\si(u_0)}_t(u_0)
\end{gather} for arbitrary $u_0\in \ba_0$ in the f\/iber over $x$. Indeed, since $(\om_-^\si\oplus\om_{0}^\si)^{-1}((\xi(x))_\si)$ is contained in the kernel of the connection form $\om_0^\si=\iota_\si^*\gamma_\si$ and $Tp_0\circ (\om_-^\si\oplus\om_{0}^\si)^{-1}((\xi(x))_\si)(x)=\xi(x)$, the claimed curve is the projection of a f\/low of a standard horizontal vector f\/ield of $\gamma_\si$ and therefore a geodesic of $\nabla^\si$.

\subsection{The characterization of automorphisms}\label{sec2.2}
Let $\va\colon \ba \to \ba$ be a (local) automorphism of the parabolic geometry and denote by $\va_0:\ba_0 \to \ba_0$ the underlying (local) $G_0$-bundle morphism. Then for each Weyl structure~$\si$, there is a $1$-form~$\U^{\si,\va}$ on~$M$ such that
\begin{gather} \label{action}
\va(\si(u_0))=\si(\va_0(u_0))\exp((\U^{\si,\va})_\si(u_0))
\end{gather}
holds for all $u_0\in \ba_0$. Consequently, the pullback of a Weyl structure is again a Weyl structure, i.e., \begin{gather*}\va^*\si=\va^{-1} \circ \si \circ \va_0=\si \exp(-(\U^{\si,\va})_\si).\end{gather*}

\begin{Lemma} \label{lem3}
Let $\va\colon \ba \to \ba$ be a $($local$)$ automorphism. Then
\begin{gather} \label{2.4.}
\big(\U^{\si\exp (\U)_\si,\va}\big)_{\si\exp (\U)_\si}=C(-(\U)_\si\circ\va_0,C((\U^{\si,\va})_\si,(\U)_\si))
\end{gather}
holds for the Weyl structure $\si\exp (\U)_\si$, where $C$ represents the BCH-formula.
\end{Lemma}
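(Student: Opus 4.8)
The plan is to apply the defining identity~(\ref{action}) directly to the shifted Weyl structure $\hat\si:=\si\exp(\U)_\si$ and reduce everything to the identity~(\ref{action}) already known for $\si$. Evaluating at a point $u_0\in\ba_0$, I want to isolate the unknown $(\U^{\hat\si,\va})_{\hat\si}(u_0)$ by computing the two frames $\va(\hat\si(u_0))$ and $\hat\si(\va_0(u_0))$ separately and comparing them in the fibre via~(\ref{action}).

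First I would expand $\va(\hat\si(u_0))=\va(\si(u_0)\exp((\U)_\si(u_0)))$. Since $\va$ is a $P$-bundle morphism it commutes with the right action of $\exp((\U)_\si(u_0))\in\exp(\fp_+)\subset P$, so this equals $\va(\si(u_0))\exp((\U)_\si(u_0))$. Substituting~(\ref{action}) for $\si$ gives $\si(\va_0(u_0))\exp((\U^{\si,\va})_\si(u_0))\exp((\U)_\si(u_0))$, and merging the two $\exp(\fp_+)$-factors by the BCH-formula $C$ yields
\[
\va(\hat\si(u_0))=\si(\va_0(u_0))\exp\big(C((\U^{\si,\va})_\si(u_0),(\U)_\si(u_0))\big).
\]
On the other hand, directly from $\hat\si=\si\exp(\U)_\si$ one has $\hat\si(\va_0(u_0))=\si(\va_0(u_0))\exp\big((\U)_\si(\va_0(u_0))\big)$, where $(\U)_\si(\va_0(u_0))=((\U)_\si\circ\va_0)(u_0)$.

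Now I would insert both expressions into the defining identity~(\ref{action}) for $\hat\si$, namely $\va(\hat\si(u_0))=\hat\si(\va_0(u_0))\exp\big((\U^{\hat\si,\va})_{\hat\si}(u_0)\big)$. Because the right $P$-action on $\ba$ is free, the common frame $\si(\va_0(u_0))$ cancels, leaving an identity in $\exp(\fp_+)$ which, after passing to $\fp_+$ via $C$, reads $C\big((\U)_\si\circ\va_0,\,(\U^{\hat\si,\va})_{\hat\si}\big)=C\big((\U^{\si,\va})_\si,\,(\U)_\si\big)$ at $u_0$. Solving for the unknown by left BCH-multiplication with $-(\U)_\si\circ\va_0$, using $C(-A,C(A,B))=B$, gives exactly~(\ref{2.4.}). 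The main point to watch is the bookkeeping of evaluation points: the factor $(\U)_\si$ pushed through $\va$ is evaluated at $u_0$, whereas the one arising from $\hat\si(\va_0(u_0))$ is evaluated at $\va_0(u_0)$, and this asymmetry is precisely what produces the composition $(\U)_\si\circ\va_0$ in the final formula. The only things requiring genuine justification are the freeness of the $P$-action (which legitimizes the cancellation) and the correct direction of the BCH inversion.
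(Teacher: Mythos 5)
Your proof is correct and follows essentially the same route as the paper: apply the $P$-equivariance of $\va$ to push $\exp((\U)_\si(u_0))$ through, substitute the identity~(\ref{action}) for $\si$, compare with the defining identity for $\hat\si=\si\exp(\U)_\si$ evaluated at $\va_0(u_0)$, and cancel in the free $P$-action to solve for the unknown via the BCH-formula. Your version merely makes explicit the bookkeeping (evaluation points, freeness, the inversion $C(-A,C(A,B))=B$) that the paper leaves implicit.
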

\begin{proof}
We get immediately from the formula (\ref{action}) that
 \begin{gather*} \va(\si(u_0))\exp((\U)_\si(u_0))=\si(\va_0(u_0))\exp((\U)_\si(\va_0(u_0)))\exp\big(\big(\U^{\si\exp (\U)_\si,\va}\big)_{\si\exp (\U)_\si}(u_0)\big)
\end{gather*} holds for all $u_0\in \ba$. This implies \begin{gather*}\exp\big(\big(\U^{\si\exp (\U)_\si,\va}\big)_{\si\exp (\U)_\si}\big)=\exp(-(\U)_\si\circ \va_0)\exp((\U^{\si,\va})_\si)\exp((\U)_\si),\end{gather*} which gives the formula.
\end{proof}

Therefore if $f=\un{\va}$ for a (local) automorphism $\va$ of the parabolic geometry, then
\begin{gather*}f^*\nabla^\si=\nabla^{\si \exp(-(\U^{\si,\va})_\si)}\end{gather*}
holds for each Weyl connection $\nabla^\si$.

There is a unique lift $\mathcal{P}^1f$ of each (local) dif\/feomorphism $f$ on $M$ to the (local) ${\rm Gl}(\fg/\fp)$-bundle morphism on $\mathcal{P}^1M$ such that $(\mathcal{P}^1f)^*\theta=\theta$ holds. If $f^*\nabla^{\si}=\nabla^{\si'}$ is satisf\/ied for some Weyl connections $\nabla^{\si}$ and $\nabla^{\si'}$, then $(\mathcal{P}^1f)^*\gamma_\si=\gamma_{\si'}$ holds. However, this does not imply that such $f$ preserves the parabolic geometry. The (local) dif\/feomorphisms $f$ that preserve the parabolic geometry also satisfy that
\begin{gather*}\mathcal{P}^1f(\iota_{\si'}(\ba_0))=\iota_\si(\ba_0)\end{gather*} holds for reductions $\iota_\si(\ba_0)$ and $\iota_{\si'}(\ba_0)$ of $\mathcal{P}^1M$ and it turns out that this is the crucial property that distinguishes the dif\/feo\-mor\-phisms preserving the parabolic geometry among all dif\/feo\-mor\-phisms preserving the set of all Weyl connections.

\begin{Proposition} \label{auto}
Let $f$ be a $($local$)$ diffeomorphism on $M$ such that for some Weyl structu\-res~$\si$ and~$\si'$ of the parabolic geometry $(\ba\to M,\om)$
\begin{itemize}\itemsep=0pt
\item $f^*\nabla^{\si}=\nabla^{\si'}$ holds, and
\item $\mathcal{P}^1f$ maps a point of $\iota_{\si'}(\ba_0)$ into the image $\iota_{\si}(\ba_0)$.
\end{itemize}
Then $f$ preserves the parabolic geometry.
\end{Proposition}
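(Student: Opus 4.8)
The plan is to promote the single-point hypothesis to an honest (local) bundle isomorphism of $\ba_0$, lift it to $\ba$ by means of the two Weyl structures, and then use the \emph{normality} of $\om$ to check that the lift pulls $\om$ back to itself.

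\emph{Step 1: the reduction is preserved, not just one point.} Since $f^*\nabla^\si=\nabla^{\si'}$, the canonical lift satisfies $(\mathcal{P}^1f)^*\gamma_\si=\gamma_{\si'}$ and $(\mathcal{P}^1f)^*\theta=\theta$, so $\mathcal{P}^1f$ carries the horizontal distribution of $\gamma_{\si'}$ onto that of $\gamma_\si$ and is $\un\Ad$-equivariant. Because $\iota_\si^*\gamma_\si=\om_0^\si$ takes values in $\fg_0$, the principal connection $\gamma_\si$ reduces to the subbundle $\iota_\si(\ba_0)$, so the horizontal curves of $\gamma_\si$ issuing from points of $\iota_\si(\ba_0)$ remain in $\iota_\si(\ba_0)$; the same holds for $\si'$. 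Starting from the one point $p\in\iota_{\si'}(\ba_0)$ whose image lies in $\iota_\si(\ba_0)$, every other point of $\iota_{\si'}(\ba_0)$ over the (connected) domain is obtained by horizontally lifting a path in $M$ and then acting by $G_0$; both operations are intertwined by $\mathcal{P}^1f$ with the corresponding operations inside $\iota_\si(\ba_0)$. Hence $\mathcal{P}^1f(\iota_{\si'}(\ba_0))\subseteq\iota_\si(\ba_0)$, and applying the same argument to $f^{-1}$ gives equality. This produces a (local) $G_0$-bundle isomorphism $f_0\colon\ba_0\to\ba_0$ covering $f$ with $\mathcal{P}^1f\circ\iota_{\si'}=\iota_\si\circ f_0$, whence $f_0^*\om_-^\si=\om_-^{\si'}$ and $f_0^*\om_0^\si=\om_0^{\si'}$ by pulling back $\theta$ and $\gamma_\si$.

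\emph{Step 2: the lift to $\ba$.} Using the splitting $u=\si'(\pi(u))\exp(Z)$ with $Z\in\fp_+$ determined by $u$, I would define $\va(u):=\si(f_0(\pi(u)))\exp(Z)$. One checks directly that $\va$ is a (local) $P$-bundle morphism of $\ba$ covering $f$ and that $\va\circ\si'=\si\circ f_0$. Since $\va$ is a $P$-bundle morphism, $\va^*\om$ is again a Cartan connection (it is $P$-equivariant and reproduces fundamental vector fields), and because $\va$ is a diffeomorphism the curvature of $\va^*\om$ is the pullback of that of $\om$; in particular $\va^*\om$ is again regular and normal. To compare $\va^*\om$ with $\om$ it suffices, by $P$-equivariance together with the fact that both reproduce fundamental vector fields, to compare their pullbacks along the section $\si'$. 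Here $$\si'^*\va^*\om=(\si\circ f_0)^*\om=f_0^*\si^*\om=\textstyle\sum_i f_0^*\om_i^\si,$$ whose components of homogeneity $\le 0$ are $f_0^*\om_-^\si+f_0^*\om_0^\si=\om_-^{\si'}+\om_0^{\si'}$, i.e.\ exactly the non-positive components of $\si'^*\om$.

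\emph{Step 3 and the main obstacle.} The delicate point is matching the positive-homogeneity parts, and this is exactly where the special structure of regular \emph{normal} parabolic geometries enters: relative to a fixed Weyl structure, the components $\om_i^{\si'}$ with $i>0$ of a regular normal Cartan connection are determined by the components of non-positive homogeneity through the normalization $\partial^*\kappa=0$ (the positive parts encode the Rho tensor, a universal expression in the curvature of the associated Weyl connection). Applying this to the two normal connections $\om$ and $\va^*\om$ with the same Weyl structure $\si'$, and recalling that their non-positive parts already agree, forces the positive parts to agree as well; thus $\si'^*\va^*\om=\si'^*\om$ and hence $\va^*\om=\om$. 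Therefore $\va$ is a (local) automorphism with $\un\va=f$, so $f$ preserves the parabolic geometry. The routine verifications ($P$-equivariance of $\va$, connectedness in Step 1) are standard; the genuine content is that generic Cartan connections would fail the positive-part matching, whereas normality rigidifies it.
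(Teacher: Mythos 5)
Your proof is correct in substance, but it takes a genuinely different route from the paper's. The paper also starts with your Step 1 (it simply asserts that $\va_0:=\iota_{\si}^{-1}\circ \mathcal{P}^1f\circ \iota_{\si'}$ is well defined; your parallel-transport argument is exactly the missing justification, since both $\gamma_\si$ and $\gamma_{\si'}$ reduce to the $G_0$-subbundles and $\mathcal{P}^1f$ intertwines them), but then it never lifts to $\ba$: it observes that $\va_0$ is an automorphism of the underlying regular infinitesimal flag structure $(p_0\colon \ba_0\to M,(\theta_{-k},\dots,\theta_{-1}))$ and invokes the equivalence of categories of \cite[Theorem~3.1.14]{parabook}, which supplies the automorphism of $(\ba\to M,\om)$ covering $\va_0$ --- except for projective and contact projective geometries, where that theorem fails and the paper argues separately from $f^*\nabla^{\si}=\nabla^{\si'}$. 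You instead construct the lift $\va$ explicitly through the Weyl structures and prove $\va^*\om=\om$ by a normalization argument. What your approach buys is uniformity: because you carry the full Weyl connection $\om_0^{\si'}$ and not merely the flag structure, the projective and contact projective cases need no separate discussion --- there the extra datum beyond the flag structure is precisely the class of (partial) connections, which your hypotheses provide. What it costs is that the entire weight of the proof moves onto Step 3, which you assert rather than prove.

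That assertion --- the positive-homogeneity components of a regular normal Cartan connection relative to a fixed Weyl structure are determined by the non-positive ones --- is true for the geometries considered, and it is indeed how the Rho tensor is computed from a Weyl connection, but it is not a formal consequence of $\partial^*\kappa=0$; it rests on a cohomological vanishing you do not mention. The standard argument runs: $\Phi:=\va^*\om-\om$ is $P$-equivariant, vanishes on vertical vectors, and along $\si'$ takes values in $\fp_+$ by your Step 2, hence has homogeneity $\geq 2$; if $\Phi_h$ is the lowest nonzero homogeneous component, normality of both connections gives $\partial^*\partial\Phi_h=0$, hence $\partial\Phi_h=0$, so $\Phi_h$ is harmonic plus a coboundary $\partial\chi$ with $\chi\in\fg_h$. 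A coboundary $\partial\chi$ with $\chi\neq 0$ always has a nonzero $\fg_0$-valued component on $\fg_{-h}$ (pair with the grading element via the Killing form), contradicting $\fp_+$-valuedness; killing the harmonic part, however, requires that $H^1(\fg_-,\fg)$ vanish in homogeneity $h\geq 2$. By Kostant's theorem this holds for every type in the paper's scope --- for projective and contact projective geometries the positive-homogeneity part of $H^1$ sits in homogeneity $1$, which your $\Phi$ never reaches --- with the single exception of $\mathfrak{sl}(2)$ with the Borel (one-dimensional projective structures), where $H^1$ lives in homogeneity $2$ and the determination of the positive part genuinely fails; that degenerate case is implicitly excluded, and the paper's ad hoc projective argument glosses over it as well. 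So your closing remark that ``normality rigidifies it'' is right in spirit and citable, but as written it conceals the one cohomological condition on which your Step 3, and hence your whole proof, depends.
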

\begin{proof}
The assumptions imply that $\va_0:= \iota_{\si}^{-1}\circ \mathcal{P}^1f\circ \iota_{\si'}$ is a well-def\/ined (local) $G_0$-bundle morphism $\va_0\colon \ba_0\to \ba_0$ satisfying $\va_0^*\om_{0}^\si=\om_{0}^{\si'}$ and $\va_0^*\om_-^\si=\om_-^{\si'}$. The associated graded map $(\theta_{-k},\dots,\theta_{-1})\colon T\ba_0\to \fg_{-k}\oplus \dots \oplus \fg_{-1}$ corresponding to $\om_-^\si$ is independent of the choice of the Weyl structure according to the formula~(\ref{change-form}). In fact, the tuple $(p_0\colon \ba_0\to M, (\theta_{-k},\dots,\theta_{-1}))$ is a regular inf\/initesimal f\/lag structure with a (local) automorphism~$\va_0$, see \cite[Section~3.1.6--3.1.8]{parabook}. Therefore the claim of theorem follows from \cite[Theorem~3.1.14]{parabook} except for projective and contact projective geometries. In the case of projective geometries, the claim trivially follows from the assumption $f^*\nabla^{\si}=\nabla^{\si'}$. In the case of contact projective geometries, $\va_0$ is a (local) automorphism of the regular inf\/initesimal f\/lag structure if and only if $f$ is a contactomorphism and the claim again follows from $f^*\nabla^{\si}=\nabla^{\si'}$, see \cite[Section 4.2]{parabook} for details.
\end{proof}

\subsection{Normal Weyl structures and generalized geodesics} There is a distinguished class of local Weyl structures, so-called \emph{normal Weyl structures at $x=p(u)$}, each of which is determined by a choice of $u\in \ba$.
More precisely, we consider local Weyl structures $\nu_u$ given by
\begin{gather*}\nu_u\big(\pi\big(\Fl_1^{\om^{-1}(X)}(u)\big)\big):=\Fl_1^{\om^{-1}(X)}(u)\end{gather*} for $X$ in some neighbourhood of $0$ in $\fg_-$. The Weyl structures $\nu_u$ for all $u\in \ba_x$ exhaust all normal Weyl structures at $x$, see \cite[Section~5.1.12]{parabook}.
These Weyl structures are distinguished by the fact that
\begin{gather}\label{autpos}
\va\big(\Fl^{\om^{-1}(X)}_1(u)\big)=\Fl^{\om^{-1}(X)}_1(\va(u))
\end{gather}
holds for all (local) automorphisms $\va$ of the parabolic geometry and all $X$ in some neighbourhood of $0$ in $\fg$. This particularly means that
\begin{gather*}\va^*\nu_u=\nu_{\va^{-1}(u)}\end{gather*}
holds for all (local) automorphisms $\va$ of parabolic geometries.

The curves of the form
\begin{gather*}p\circ \Fl_t^{\om^{-1}(X)}(u)\end{gather*}
for $X\in \fg_-$ and $u \in \ba$ are called \emph{generalized geodesics.} They always provide the normal coordinate system given by~$u$. The crucial observation is that the set of generalized geodesics going through~$x$ coincides with the set of geodesics of normal Weyl connections $\nabla^{\nu_u}$ for all~$u$. Therefore there is the following description of automorphisms of parabolic geometries.

\begin{Proposition}\label{prop2.4}
Let $\va$ be a $($local$)$ $P$-bundle morphism on $\ba$ and let $f=\un{\va}$ be its underlying $($local$)$ diffeomorphism of~$M$. If $\va$ is a~$($local$)$ automorphism of the parabolic geometry, then the equality $f^*\nabla^{\nu_u}=\nabla^{\nu_{\va^{-1}(u)}}$ holds for all $u\in \ba$ and $f$ maps the set of generalized geodesics going through $x$ onto the set of generalized geodesics going through~$f(x)$.

Moreover, if $f$ has coordinates $\Ad(g_0)\in {\rm Gl}(\fg_-)$ for $g_0\in G_0$ in the normal coordinate system given by $u\in \ba$, then $\va$ is a $($local$)$ automorphism of the parabolic geometry if and only if $f^*\nabla^{\nu_u}=\nabla^{\nu_{u}}$ holds.
\end{Proposition}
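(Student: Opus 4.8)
For the first part I would lean entirely on the transformation rules already assembled in the excerpt. Just before Lemma~\ref{lem3} the identity $f^*\nabla^\si=\nabla^{\va^*\si}$ is obtained for every Weyl structure $\si$ (combine $\va^*\si=\si\exp(-(\U^{\si,\va})_\si)$ with the displayed formula for $f^*\nabla^\si$). Specializing $\si=\nu_u$ and inserting the rule $\va^*\nu_u=\nu_{\va^{-1}(u)}$ for normal Weyl structures (established from~(\ref{autpos})) gives $f^*\nabla^{\nu_u}=\nabla^{\nu_{\va^{-1}(u)}}$ at once. For the generalized geodesics I would simply project~(\ref{autpos}): using $p\circ\va=f\circ p$ and $\Fl_t^{\om^{-1}(X)}(u)=\Fl_1^{\om^{-1}(tX)}(u)$, the curve $t\mapsto p\circ\Fl_t^{\om^{-1}(X)}(u)$ through $x$ is sent by $f$ to $t\mapsto p\circ\Fl_t^{\om^{-1}(X)}(\va(u))$, a generalized geodesic through $f(x)$; surjectivity onto all generalized geodesics through $f(x)$ follows because $\va$ restricts to a bijection $\ba_x\to\ba_{f(x)}$.

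For the ``moreover'' part I would first record the key fact that normal Weyl structures are insensitive to the $G_0$-action, that is $\nu_{ug_0}=\nu_u$ for $g_0\in G_0$. This is a short computation: writing $\Fl_1^{\om^{-1}(Y)}(ug_0)=\Fl_1^{\om^{-1}(\Ad(g_0)Y)}(u)\,g_0$ (the right-equivariance of constant vector fields already used in Definition~\ref{def3}) and applying the $G_0$-equivariance of $\pi$ and of the section $\nu_u$, one checks that $\nu_u$ and $\nu_{ug_0}$ agree on $\pi(\Fl_1^{\om^{-1}(Y)}(ug_0))$ for all small $Y$. Granting this, the forward implication is immediate: by the defining formula (as in Definition~\ref{def3}) $\va(u)=ug_0$, hence $\va^{-1}(u)=ug_0^{-1}$ by $P$-equivariance, and the first part together with $\nu_{ug_0^{-1}}=\nu_u$ yields $f^*\nabla^{\nu_u}=\nabla^{\nu_{\va^{-1}(u)}}=\nabla^{\nu_u}$.

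The substantive direction is the converse, where Proposition~\ref{auto} is the designed tool, applied with $\si=\si'=\nu_u$. Its first hypothesis $f^*\nabla^{\nu_u}=\nabla^{\nu_u}$ is exactly our assumption, so the whole task is to verify the second: that $\mathcal{P}^1f$ maps some point of $\iota_{\nu_u}(\ba_0)$ into $\iota_{\nu_u}(\ba_0)$. Note $f(x)=p(ug_0)=x$, so I would evaluate at $\iota_{\nu_u}(\pi(u))=T_0\psi_u$, the frame of $T_xM$ realizing the normal coordinate chart $\psi_u\colon X\mapsto p(\Fl_1^{\om^{-1}(X)}(u))$. Since $f$ has coordinates $\Ad(g_0)$ in precisely these coordinates, $f=\psi_u\circ\Ad(g_0)\circ\psi_u^{-1}$ near $x$, so $T_xf$ is $\Ad(g_0)$ read in this frame; as $\iota_{\nu_u}$ is a reduction over $\un\Ad\colon G_0\to{\rm Gl}(\fg/\fp)$, this gives
\begin{gather*}
\mathcal{P}^1f\big(\iota_{\nu_u}(\pi(u))\big)=T_xf\circ\iota_{\nu_u}(\pi(u))=\iota_{\nu_u}(\pi(u))\circ\Ad(g_0)=\iota_{\nu_u}(\pi(ug_0)),
\end{gather*}
which lies in $\iota_{\nu_u}(\ba_0)$. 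Proposition~\ref{auto} then shows $f$ preserves the parabolic geometry.

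It remains to identify the (by effectivity unique) automorphism $\tilde{\va}$ covering $f$ with the given $\va$. Since $\tilde{\va}$ is an automorphism it satisfies~(\ref{autpos}), so $\tilde{\va}(u)=uq$ for some $q\in P$, and the $\ba_0$-level morphism produced in the proof of Proposition~\ref{auto} forces $\pi(uq)=\pi(u)g_0$, i.e.\ $q\in g_0\exp(\fp_+)$. Comparing the two descriptions of $f$ on generalized geodesics, namely $p(\Fl_1^{\om^{-1}(X)}(uq))=\psi_u(\Ad(g_0)X)$, and using that the highly non-linear change of normal coordinates induced by $\exp(\fp_+)$ has trivial stabilizer (equivalently, $Z\mapsto\nu_{u\exp Z}$ is injective on $\fp_+$), pins down $q=g_0$; thus $\tilde{\va}(u)=ug_0=\va(u)$ and, by $P$-equivariance together with~(\ref{autpos}), $\tilde{\va}=\va$. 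I expect this last bookkeeping---the upgrade from the $\ba_0$-level match to genuine equality in $\ba$---to be the only delicate point, since everything else is a direct application of the transformation rules and of Proposition~\ref{auto}.
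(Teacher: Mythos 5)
Your handling of the first claim and of the skeleton of the ``moreover'' claim is essentially the paper's own proof with the details written out: the paper likewise obtains $f^*\nabla^{\nu_u}=\nabla^{\nu_{\va^{-1}(u)}}$ from $f^*\nabla^\si=\nabla^{\va^*\si}$, reads off the statement about generalized geodesics from the description~(\ref{geod}) of geodesics of Weyl connections, and proves the converse of the ``moreover'' part by applying Proposition~\ref{auto} with $\si=\si'=\nu_u$, the coordinate hypothesis supplying the second assumption there and the identity $\nabla^{\nu_u}=\nabla^{\nu_{ug_0^{-1}}}$ (your $\nu_{ug_0}=\nu_u$) supplying the forward implication. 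Your explicit frame computation $\mathcal{P}^1f(\iota_{\nu_u}(\pi(u)))=\iota_{\nu_u}(\pi(u)g_0)$ is precisely the sentence the paper leaves unverified, and it is correct.

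The one place where you go beyond the paper is the final identification $\tilde{\va}=\va$, and there your justification has a genuine flaw. The paper does not carry out this step at all: it tacitly reads $\va$ as the morphism determined by $\va(u)=ug_0$ through the flow formula, as in Definition~\ref{def3} (as do you in the forward direction), and stops once $f$ is known to preserve the geometry. You are right that the step is delicate, but your parenthetical ``equivalently'' conflates two inequivalent statements. Injectivity of $Z\mapsto\nu_{u\exp Z}$ on $\fp_+$ is trivial and useless here: the sections already differ at the single point $\pi(u)$, since $\nu_{u\exp Z}(\pi(u))=u\exp Z$; this says nothing about the underlying charts $\psi_{u\exp Z}=p\circ \Fl_1^{\om^{-1}(\,\cdot\,)}(u\exp Z)$. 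What your comparison of the two descriptions of $f$ on generalized geodesics actually requires is $\psi_{u\exp Z}=\psi_u\Rightarrow Z=0$, which is a BCH computation only in the flat model; on a curved geometry the constant vector fields do not bracket algebraically, and you prove nothing in that direction. A repair using only the paper's toolkit: apply the first claim to the covering automorphism $\tilde{\va}$ to get $\nabla^{\nu_{\tilde{\va}^{-1}(u)}}=f^*\nabla^{\nu_u}=\nabla^{\nu_u}$, and observe that distinct Weyl structures induce distinct Weyl connections on $TM$. Indeed, if $\U_m$ is the lowest non-zero component of $(\U)_\si$, then the only term in the difference~(\ref{change-wc}) involving $\xi_{-m}$ is $-\ad(\U_m)(\xi_{-m}).(\tau)_\si$, so its vanishing for all $\xi$ forces $[\U_m,\fg_{-m}]$ to act trivially on $\fg/\fp$; by effectivity this gives $[\U_m,\fg_{-m}]=0$, and pairing with the grading element under the Killing form yields $\U_m=0$. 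Hence $\nu_{\tilde{\va}^{-1}(u)}=\nu_u$, which (since $\nu_v$ determines $v$ up to $G_0$) gives $\tilde{\va}(u)\in uG_0$; combined with your $\ba_0$-level identity $\pi(\tilde{\va}(u))=\pi(u)g_0$ this pins down $\tilde{\va}(u)=ug_0=\va(u)$, and then~(\ref{autpos}) together with $P$-equivariance finishes $\tilde{\va}=\va$.
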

\begin{proof}
Since $f^*\nabla^\si=\nabla^{\va^*\si}$ holds for all Weyl structures $\si$ and all (local) automorphisms $\va$ of the parabolic geometry, the f\/irst claim follows from the formula~(\ref{geod}). If $f$ has coordinates $\Ad(g_0)\in {\rm Gl}(\fg_-)$ in the normal coordinate system given by $u\in \ba$, then the second assumption of Proposition \ref{auto} is satisf\/ied. Then the second claim is a consequence of the f\/irst claim and Proposition~\ref{auto}, because $\nabla^{\nu_u}=\nabla^{\nu_{ug_0^{-1}}}$ holds.
\end{proof}

\section[The uniqueness of $s$-symmetries and the prolongation rigidity]{The uniqueness of $\boldsymbol{s}$-symmetries and the prolongation rigidity}\label{SEC3}

In this section, we prove Theorem \ref{t1.2}. We also characterize all triples $(\fg,\fp,\mu)$ that are prolongation rigid outside of the $1$-eigenspace of $s$.

\subsection[Consequences of the existence of more $s$-symmetries at one point]{Consequences of the existence of more $\boldsymbol{s}$-symmetries at one point}\label{sec3.1}

Let us recall that if $V$ is an irreducible $G_0$-module, then the element $s\in Z(G_0)$ acts on $V$ by a~single eigenvalue. In particular, we can decompose each completely reducible $G_0$-module $V$ into $G_0$-submodules \begin{gather*}V^s(a):=\{X\in V\colon s(X)=aX\}\end{gather*}
according to the eigenvalues of the action of $s\in Z(G_0)$. In particular, we will often consider the $1$-eigenspaces $\fg_-^s(1)$, $\fg_i^s(1)$ and $\fp_+^s(1)$ in $\fg_-$, $\fg_i$ and $\fp_+$, respectively.

The following proposition is a crucial technical result for the proof of Theorem~\ref{t1.2}.

\begin{Proposition} \label{auto-action}
Let $s_u$ be a $($local$)$ $s$-symmetry at $x$ for some $u \in \ba_x$. Then for each Weyl structure $\si$, there is a $1$-form $\U^{\si,s_u}$ on $M$ satisfying
\begin{enumerate}\itemsep=0pt
\item[$1)$] $s_u^*\si=\si\exp(-(\U^{\si,s_u})_\si)$,
\item[$2)$] $(\U^{\si,s_u})_\si(\pi(u))=C(-\Ad(s)^{-1}(Y),Y)$ for some $Y\in \fp_+$,
where $C$ represents the BCH-formula on the nilpotent Lie algebra $\fp_+$, and
\item[$3)$] if $(\U^{\si,s_u})_\si(\pi(u))=Z_{i}+\cdots+Z_k$ holds for $Z_{j}\in \fg_j$, then the component of $Z_{i}$ contained in $\fg_i^s(1)$ is trivial, where $i$ is the smallest index $j$ such that $(\U^{\si,s_u})_\si(\pi(u))$ has a non-zero component in $\fg_i$.
\end{enumerate}
Moreover, if $s_v$ is a $($local$)$ $s$-symmetry at $x$ for some $v\in \ba_x$, then $s_u=s_v$ if and only if $\U^{\si,s_u}(x)=\U^{\si,s_v}(x)$ holds.
\end{Proposition}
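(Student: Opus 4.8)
The plan is to establish the three numbered properties by analyzing the defining formula for $s_u$ in Definition~\ref{def3} together with the behaviour of normal Weyl structures under automorphisms. First I would use the key fact from the preceding subsection that $s_u^*\nu_u=\nu_{s_u^{-1}(u)}$, and I would identify $s_u^{-1}(u)$. From the defining formula $s_u(\Fl_1^{\omi(X)}(u))=\Fl_1^{\omi(X)}(us)$, evaluated at $X=0$, we get $s_u(u)=us$, so $s_u^{-1}(u)=us^{-1}$. This is the point where the element $s\in Z(G_0)$ enters concretely. The existence of the $1$-form $\U^{\si,s_u}$ and claim~$1)$ are then immediate from the general characterization in Section~\ref{sec2.2}, specifically the identity $\va^*\si=\si\exp(-(\U^{\si,\va})_\si)$ applied to $\va=s_u$; I would simply record that the automorphism property $s_u^*\om=\om$ guarantees $s_u$ is a genuine automorphism so that this machinery applies.

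For claim~$2)$ the strategy is to compute $(\U^{\si,s_u})_\si$ at the base point $\pi(u)$ by relating the Weyl structure $\si$ to the normal Weyl structure $\nu_u$. I would write $\si=\nu_u\exp(\U)_{\nu_u}$ for an appropriate $1$-form $\U$ and use Lemma~\ref{lem3} (the BCH transformation formula~(\ref{2.4.})) to transport the action of $s_u$ from $\nu_u$ to $\si$. The crucial input is that with respect to the normal Weyl structure itself, $s_u$ acts in the simplest possible way: since $s_u^*\nu_u=\nu_{us^{-1}}=\nu_u$ (as $\nu_{ug_0}$ and $\nu_u$ give the same Weyl connection and $s\in G_0$ only rescales coordinates), the form $\U^{\nu_u,s_u}$ vanishes at the base point, and the underlying map $(s_u)_0$ acts on the fiber by right multiplication by $s$. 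Feeding this into~(\ref{2.4.}) with the normalization that $(\U)_{\nu_u}(\pi(u))=Y\in\fp_+$ measures the difference between $\si$ and $\nu_u$ at $\pi(u)$, the BCH-formula collapses at the base point to $C(-\Ad(s)^{-1}(Y),Y)$, because pulling back the fiber coordinate $Y$ by the right $s$-action produces precisely $\Ad(s)^{-1}(Y)$. This yields exactly the stated form in claim~$2)$.

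For claim~$3)$ I would expand $C(-\Ad(s)^{-1}(Y),Y)$ by the BCH-formula and extract the lowest-degree component. Writing $Y=Y_1+\cdots+Y_k$ with $Y_j\in\fg_j$, the lowest nonzero homogeneous part of $C(-\Ad(s)^{-1}(Y),Y)$ in degree $i$ is $Y_i-\Ad(s)^{-1}(Y_i)=(\id-\Ad(s)^{-1})(Y_i)$, since all BCH bracket corrections have strictly higher homogeneity. The operator $\id-\Ad(s)^{-1}$ annihilates the $1$-eigenspace $\fg_i^s(1)$ of $s$ and is invertible on its complement; hence its image lies entirely in the sum of eigenspaces for eigenvalues different from $1$, so the $\fg_i^s(1)$-component of $Z_i$ vanishes. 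This is the heart of the argument and I expect the main obstacle to be bookkeeping: verifying carefully that the BCH corrections to $C(-\Ad(s)^{-1}(Y),Y)$ genuinely contribute only in degrees strictly above $i$, so that the lowest component really is the linear term $(\id-\Ad(s)^{-1})(Y_i)$, and confirming that the normalization $(\U)_{\nu_u}(\pi(u))\in\fp_+$ is achievable (i.e., that $\si$ may be compared to a normal Weyl structure through a $1$-form rather than a more general element of $P$).

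Finally, for the concluding equivalence, the forward direction $s_u=s_v\Rightarrow\U^{\si,s_u}(x)=\U^{\si,s_v}(x)$ is trivial since the $1$-form is determined by the automorphism. For the converse I would argue that $\U^{\si,s_u}(x)=\U^{\si,s_v}(x)$ forces $s_u^*\si$ and $s_v^*\si$ to agree at $x$ by claim~$1)$, hence $s_u$ and $s_v$ induce the same Weyl structure and the same value on $\si(\ba_0)_x$; combined with Proposition~\ref{prop2.4}, which pins down an $s$-symmetry by its action on a single normal Weyl connection, this gives $s_u=s_v$. Here I would invoke effectivity of $(G,P)$ so that agreement of the underlying data on the fiber over $x$ propagates to equality of the bundle morphisms, using that a local automorphism is determined by its $1$-jet at a point via the absolute parallelism $\om$.
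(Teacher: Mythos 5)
Your proof of properties (1)--(3) follows essentially the paper's route: the paper also normalizes by a Weyl structure through $u$ (it uses an arbitrary $\si$ with $\si(\pi(u))=u$, observing $(\U^{\si,s_u})_\si(\pi(u))=0$ directly from $s_u(u)=us$ and equivariance, where you use $\nu_u$ and the stronger identity $s_u^*\nu_u=\nu_{us^{-1}}=\nu_u$), and both arguments then transport to an arbitrary Weyl structure via Lemma~\ref{lem3}, with $\Ad(s)^{-1}(Y)$ arising exactly as you say, from $G_0$-equivariance of $(\U)_\si$ under the right $s$-action on the fiber. Two corrections within this shared route. First, $\nu_{us^{-1}}=\nu_u$ is true, but your parenthetical justification (equality of the induced Weyl connections) is insufficient: equal Weyl connections do not in general force equal Weyl structures; the correct reason is the defining flow formula for $\nu_u$ together with $P$-equivariance of $\om$. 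Second, in (3) your stated reason --- that all BCH bracket corrections have strictly higher homogeneity --- is false as a blanket claim once $i\geq 2$, since brackets of components of degrees $p+q=i$ do land in $\fg_i$. The bookkeeping you flag but do not carry out (and which the paper's proof also leaves implicit) is inductive: $Z_j=0$ for $j<i$ forces $\Ad(s)^{-1}(Y_j)=Y_j$ for all $j<i$, whereupon the degree-$i$ bracket terms coincide with those of $C(-Y',Y')=0$ for $Y'=\sum_{j<i}Y_j$ and hence cancel, leaving $Z_i=(\id-\Ad(s)^{-1})(Y_i)$, which indeed has trivial component in $\fg_i^s(1)$. This is a fixable slip, not a fatal gap, and it matches the paper's own level of terseness at this step.

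For the final uniqueness claim your route genuinely differs from the paper's. The paper writes $v=ug_0\exp(Y)$, computes $(\U^{\si,s_v})_\si(\pi(u))=C(-\Ad(s)^{-1}(\Ad(g_0)(Y)),\Ad(g_0)(Y))$ for $\si$ with $\si(\pi(u))=u$, and deduces from the vanishing of this expression that $\Ad(s)(Y)=Y$, so that $s$ commutes with $g_0\exp(Y)$ and $s_v(u)=us=s_u(u)$. You instead argue abstractly: equality of the $1$-forms at $x$ makes $s_u^*\si$ and $s_v^*\si$ agree on the fiber over $x$, hence $s_u$ and $s_v$ agree at some point of $\ba_x$, and an automorphism of a Cartan geometry is determined by its value at a single point via the absolute parallelism. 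This is valid and arguably cleaner, but it silently uses that $(s_u)_0$ and $(s_v)_0$ both act as right multiplication by $s$ on $(\ba_0)_x$ --- true precisely because $s\in Z(G_0)$ and both symmetries fix $x$ --- and that point should be made explicit, since otherwise agreement of the pulled-back sections does not translate into agreement of the morphisms at a point. Also, effectivity of $(G,P)$ is not the relevant tool here: it is needed only to pass from underlying diffeomorphisms on $M$ to bundle morphisms, whereas you compare bundle morphisms directly; likewise the appeal to Proposition~\ref{prop2.4} is an unnecessary detour. What the paper's computation buys is the explicit algebraic criterion ($s$ commutes with $g_0\exp(Y)$) identifying exactly which $v$ yield $s_v=s_u$; what your argument buys is a uniqueness step free of BCH manipulations.
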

\begin{proof}
The normal Weyl structure $\nu_u$ always satisf\/ies $\nu_u(\pi(u))=u$ and therefore the set of all Weyl structures $\si$ satisfying $\si(\pi(u))=u$ is non-empty. Let $s_u$ be a (local) $s$-symmetry at~$x$ and consider arbitrary Weyl structure $\si$ satisfying $\si(\pi(u))=u$. Then $(\U^{\si,s_u})_\si(\pi(u))=0$ holds and the Lemma~\ref{lem3} implies that $(\U^{\si,s_u})_\si$ has the claimed properties~(1) and~(2) for arbitrary Weyl structure. The claimed property (3) holds, because the BCH-formula implies that $C(-\Ad(s)^{-1}(Y),Y)_i=-\Ad(s)^{-1}(Y_i)+Y_i=Z_i$ holds.

If $s_v$ is a (local) $s$-symmetry at $x$ for some $v\in \ba_x$, then $s_u=s_v$ if and only if $us=s_u(u)=s_v(u)$ holds. Thus we need to show that if $\U^{\si,s_u}(x)=\U^{\si,s_v}(x)$ holds, then \smash{$s_u=s_v$}. We can assume $\si(\pi(u))=u$ for the Weyl structure $\si$, because the equality $\U^{\si,s_u}(x)=\U^{\si,s_v}(x)$ is preserved if we change the Weyl structure $\si$. Suppose $g_0\in G_0$ and $Y\in \fp_+$ are such that $v=ug_0\exp(Y)$ holds. If $\hat \si$ is a Weyl structure such that $\hat \si(\pi(u))=ug_0\exp(Y)$, then $\U^{\si,s_u}(x)= 0$, $\U^{\hat \si,s_v}(x)=0$ and $(\U^{\si,s_v})_\si(\pi(u))=C(-\Ad(s)^{-1}(\Ad(g_0)(Y)),\Ad(g_0)(Y))$ hold. Since $C(-\Ad(s)^{-1}(\Ad(g_0)(Y)),\Ad(g_0)(Y))=0$ if and only if $\Ad(s)(Y)=Y$, the element~$s$ commutes with $g_0\exp(Y)$ and $s_u=s_v$ holds.
\end{proof}

The harmonic curvature $\ka_H$ is preserved by each (local) automorphism of the parabolic geometry. Since~$\kappa_H$ is a section of an associated vector bundle to~$\ba$ for a representation of~$P$ which is trivial on $\exp(\fp_+)$, the function $(\kappa_H)_\si$ does not depend on the choice of the Weyl structure~$\si$ and we will write $\kappa_H(u)$ instead of $(\kappa_H)_\si(\pi(u))$. Consequently, $\kappa_H(p(u))=0$ if and only if $\kappa_H(u)=0$.

If $s_u$ is a (local) $s$-symmetry at $p(u)$, then $\un s_u^*\kappa_H=\kappa_H$. Thus $s.\kappa_H(u)=\kappa_H(u)$ trivially follows, where we denote by $.$ the tensorial action of $\fg_0$ on $\kappa_H$. This proves the f\/irst claim of Theorem \ref{t1.2}.

The second claim of Theorem \ref{t1.2} is a consequence of the following proposition and Def\/i\-ni\-tion~\ref{prolong} of the prolongation rigidity.

\begin{Proposition} \label{cor-ws}
Assume there are $($local$)$ $s$-symmetries $s_u$ and $s_v$ at $x$ for some $u,v\in \ba_x$. Suppose that $(\U^{\si,s_v})_\si(\pi(u))=0$
and $(\U^{\si,s_u})_\si(\pi(u))=Z_i+\cdots+Z_k$ hold for some Weyl structu\-re~$\si$. Then $Z_i\in \operatorname{pr}(\kappa_H(u))_i$.
\end{Proposition}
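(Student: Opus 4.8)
The plan is to extract information from the two $s$-symmetries $s_u$ and $s_v$ by comparing their actions through the formula in Lemma~\ref{lem3} and the eigenvalue structure described in Proposition~\ref{auto-action}, and then to translate the resulting algebraic constraint into the defining condition of $\operatorname{pr}(\kappa_H(u))_i$. First I would use the freedom in Proposition~\ref{auto-action}: since $(\U^{\si,s_v})_\si(\pi(u))=0$, the Weyl structure $\si$ satisfies $s_v^*\si(x)=\si(x)$ at $x$, so I may assume $\si(\pi(u))=u$ (up to the standard reduction argument that changing $\si$ preserves the hypotheses). With this normalization, $s_v$ fixes $\si$ to first order at $x$, whereas $s_u$ shifts it by the $1$-form whose value at $x$ is $Z_i+\cdots+Z_k$ with leading term $Z_i\in\fg_i$.

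The key step is to use that both $s_u$ and $s_v$ preserve the harmonic curvature: $\un{s_u}^*\kappa_H=\kappa_H$ and $\un{s_v}^*\kappa_H=\kappa_H$, hence $s.\kappa_H(u)=\kappa_H(u)$ as already observed before the statement. The composition or comparison of the two symmetries produces an automorphism-like relation whose linearization at $x$ carries the difference $(\U^{\si,s_u})_\si(\pi(u))-(\U^{\si,s_v})_\si(\pi(u))=Z_i+\cdots+Z_k$. Differentiating the invariance of $\kappa_H$ along the generalized geodesics emanating from $u$ in directions $X_1,\dots,X_i\in\fg_{-1}$, and using that the lowest-order term governs the leading behavior, I expect to obtain that $\ad(X_1)\cdots\ad(X_i)(Z_i)$ annihilates $\kappa_H(u)$, i.e.\ lies in $\operatorname{ann}(\kappa_H(u))=\{A\in\fg_0: A.\kappa_H(u)=0\}$. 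Since $Z_i\in\fg_i$ and this holds for all choices of the $X_j$, this is precisely the membership $Z_i\in\operatorname{pr}(\kappa_H(u))_i$ from Definition~\ref{prolong}.

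The main obstacle I anticipate is controlling the higher-order terms $Z_{i+1}+\cdots+Z_k$ so that only the leading component $Z_i$ contributes to the annihilator condition at order $i$. This requires a careful filtration/homogeneity argument: the action of $\exp((\U^{\si,s_u})_\si)$ on $\kappa_H$ along $i$-fold brackets with $\fg_{-1}$ must be expanded, and one shows via the grading that the terms coming from $Z_{i+1},\dots,Z_k$ and from lower-order brackets either vanish or live in strictly higher filtration degree, so that the equation $s.\kappa_H(u)=\kappa_H(u)$ forces exactly the vanishing of $\ad(X_1)\cdots\ad(X_i)(Z_i).\kappa_H(u)$. Here the property~(3) of Proposition~\ref{auto-action}, namely that the $\fg_i^s(1)$-component of $Z_i$ is trivial, should be combined with the $s$-invariance of $\kappa_H(u)$ to eliminate the ambiguity and isolate the leading-order obstruction, which is the heart of the computation.
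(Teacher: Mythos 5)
There is a genuine gap, and it starts with your opening normalization. The hypothesis $(\U^{\si,s_v})_\si(\pi(u))=0$ says that $\si$ is adapted to the \emph{other} symmetry $s_v$; if you additionally arrange $\si(\pi(u))=u$, then by the argument in the proof of Proposition~\ref{auto-action} you would also get $(\U^{\si,s_u})_\si(\pi(u))=0$, i.e., $Z_i+\cdots+Z_k=0$, and the statement becomes vacuous (indeed it would force $s_u=s_v$ by the last claim of Proposition~\ref{auto-action}). The entire content of the proposition is the comparison of two \emph{different} symmetries against one and the same Weyl structure, so you cannot re-normalize $\si$ to $u$ while keeping the hypothesis on $s_v$; the two conditions in the statement are incompatible with your normalization unless the conclusion is trivial.

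More seriously, the mechanism you propose---differentiating the invariance $\un s_u^*\ka_H=\ka_H$ $i$ times and reading off directly that $\ad(X_1)\cdots\ad(X_i)(Z_i)$ annihilates $\ka_H(u)$---does not work for a single symmetry. In the paper's computation one takes the directions to be eigenvectors, $(\xi^b)_\si(\pi(u))=X^b\in\fg_{-1}^s(\frac{1}{a_b})$, and the invariance under $s_u$, combined with the change-of-Weyl-connection formula~(\ref{change-wc}) and the filtration control of $(\na^\si_{\xi^b}\U^{\si,s_u})_{\si(\pi(u))}$ (this part you did anticipate correctly), yields only the identity~(\ref{gl--zavorka}): $(-1)^j(a_1\cdots a_j-1)(\nabla^\si)^j_{\xi^1,\dots,\xi^j}\ka_H(u)=\ad\big(X^j\big)\cdots\ad\big(X^1\big)(Z_i).\ka_H(u)$. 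The iterated covariant derivative on the left has no reason to vanish, so no annihilation follows yet; if it did follow from one symmetry alone, the proposition would not need $s_v$ at all. The second symmetry enters not through the vague ``composition or comparison'' you describe, but by running the identical computation for $s_v$, where $(\U^{\si,s_v})_\si(\pi(u))=0$ removes the bracket correction and gives $(a_1\cdots a_j-1)(\nabla^\si)^j_{\xi^1,\dots,\xi^j}\ka_H(u)=0$. Feeding this back into~(\ref{gl--zavorka}) kills the right-hand side whenever $a_1\cdots a_j\neq 1$, and property~(3) of Proposition~\ref{auto-action} (triviality of the $\fg_i^s(1)$-component of $Z_i$) handles the remaining eigenvalue configurations, after which linearity over eigenvector decompositions of arbitrary $X_1,\dots,X_i\in\fg_{-1}$ concludes. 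Your appeal to the zeroth-order relation $s.\ka_H(u)=\ka_H(u)$ as the thing that ``forces exactly the vanishing'' is unjustified: that relation is already used to prove the first claim of Theorem~\ref{t1.2} and contains no information about $Z_i$. You also never choose eigenvector directions, which is the only way the eigenvalues $a_b$---and hence the dichotomy that makes the two-symmetry comparison effective---enter the argument.
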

\begin{proof}
We show that $\ad(X_1)\cdots \ad(X_i)(Z_i).\kappa_H(u)=0$ holds for all $X_1,\dots, X_i\in\fg_{-1}$. Consider an arbitrary Weyl structure~$\si$ and consider the iterated covariant derivative $(\nabla^\si)^j_{\xi^1,\dots,\xi^j}$ for vector f\/ields $\xi^1,\dots,\xi^j$ such that
\begin{gather*}\big(\xi^b\big)_\si=\xi^b_{-1}\colon \ \ba_0\to \fg_{-1},\qquad
\big(\xi^b\big)_\si(\pi(u))=X^b\end{gather*} hold for some $X^b\in \fg_{-1}^s( {1 \over a_b})$ for some $a_b$ for all $1\leq b \leq j$. We assume $j\leq i$ unless we state otherwise.

We compute
\begin{gather*}
(\un s_u^*\nabla^\si)^j_{\xi^1,\dots,\xi^j} \ka_H(u)= \un s_u^*(\nabla^\si)^j_{(\un s_u)_*\xi^1,\dots,(\un s_u)_*\xi^j}(\un s_u)_*\ka_H(u)
=(\nabla^\si)^j_{(\un s_u)_*\xi^1,\dots,(\un s_u)_*\xi^j}\ka_H(u).
\end{gather*}
Since we assume $X^b\in \fg_{-1}^s({1 \over a_b})$, we get
\begin{gather*}
\big((\un s_u)_*\xi^b\big)_{\si \exp(\U^{\si,s_u})_\si}(\pi(u))=\big((\un s_u)_*\xi^b\big)_{\si}(\pi(u))=\big(\xi^b\big)_\si(\pi(u)s)\\
\hphantom{\big((\un s_u)_*\xi^b\big)_{\si \exp(\U^{\si,s_u})_\si}(\pi(u))}{}=\Ad(s)^{-1}\big(\xi^b\big)_\si(\pi(u))=a_bX^b.
\end{gather*}
Thus
\begin{gather}\label{form-act}
(\un s_u^*\nabla^\si)^j_{\xi^1,\dots,\xi^j} \ka_H(u)=a_1\cdots a_j(\nabla^\si)^j_{\xi^1,\dots,\xi^j} \ka_H(u).
\end{gather}

If $(\U^{\si,s_u})_\si(\pi(u))=Z_i+\dots+Z_k$ holds for the Weyl structure $\si$, then the formula~(\ref{change-wc}) together with Proposition~\ref{auto-action} imply
\begin{gather*}(\un s_u^*\na^\si)_{\xi^b}\ka_H(u)=\na^{\si \exp(-(\U^{\si,s_u})_\si)}_{\xi^b}\ka_H(u)=\na^\si_\xi \ka_H(u)+\ad(Z_i)\big(X^{b}\big).\ka_H(u).\end{gather*}
In particular, if $i>1$, then
\begin{gather*}(\un s_u^*\na^\si)_{\xi^b}\ka_H(u)=\na^{\si \exp(-(\U^{\si,s_u})_\si)}_{\xi^b}\ka_H(u)=\na^\si_\xi \ka_H(u).\end{gather*}
If we apply the above formulas onto the f\/irst connection in $(\un s_u^*\nabla^\si)^j_{\xi^1,\dots,\xi^j} \ka_H(u)$, then we obtain
\begin{gather*}
(\un s_u^*(\nabla^\si)^j)_{\xi^1,\dots,\xi^j} \ka_H(u) =\na^\si_{\xi^1}(\un s_u^*\nabla^\si)^{j-1}_{\xi^2,\dots,\xi^{j}} \ka_H(u).
\end{gather*}
In the next step, the same formulas for the second connection lead to the formula
\begin{gather*}
(\un s_u^*\nabla^\si)^j_{\xi^1,\dots,\xi^j} \ka_H(u) =(\na^\si)^2_{\xi^1,\xi^2}(\un s_u^*\nabla^\si)^{j-2}_{\xi^3,\dots,\xi^{j}} \ka_H(u)\\
\hphantom{(\un s_u^*\nabla^\si)^j_{\xi^1,\dots,\xi^j} \ka_H(u) =}{}
-\ad(X^{2})((\nabla^\si)_{\xi^1}(\U^{\si,s_u})_\si).(\un s_u^*\nabla^\si)^{j-2}_{\xi^1,\dots,\xi^{l-j}}\ka_H(u).
\end{gather*}
Thus before we consider the next step, we need to characterize the components of $(\na^\si_{\xi^b}\! \U^{\si,s_u}\!)_{\si(\pi(u))\!}\!$ in $\fg_1\oplus \cdots \oplus \fg_{j}$ for $j<i$. Firstly, let us view $(\U^{\si,s_u})_\si$ as a section of the adjoint tractor bundle $\ba\times_P \fg$. Observe that the covariant derivative $\na^\si_{\xi^b}$ coincides with the fundamental derivative on the components in $\fg_-\oplus \fg_0 \oplus \fg_1\oplus \dots \oplus \fg_j$ according to the formula from \cite[Proposition~5.1.10]{parabook}. We know that $(\U^{\si,s_u})_\si$ has its values in $\fp_+$ and the components of $(\na^\si_{\xi^b} \U^{\si,s_u})_{\si(\pi(u))}$ in $\fg_1\oplus \cdots \oplus \fg_{j}$ for $j<i$ are tensorial both in~$\xi^b$ and $\U^{\si,s_u}$. Then, using the formula from \cite[Corollary~1.5.8]{parabook} and the $P$-equivariancy of $\om$, we get the following equality on the restriction to $\fg_1\oplus \cdots \oplus \fg_j$ for $j<i$
\begin{gather*}
\big(\na^\si_{\xi^b} \U^{\si,s_u}\big)_{\si(\pi(u))} =\om(\si(\pi(u)))\big(\big[\om^{-1}\big(\xi^b\big),\om^{-1}(Z_i)\big]\big)=-\ad\big(X^b\big)(Z_i).
\end{gather*}
Therefore
\begin{align*}
(\un s_u^*\nabla^\si)^j_{\xi^1,\dots,\xi^j} \ka_H(u)&=(\na^\si)^2_{\xi^1,\xi^2}(\un s_u^*\nabla^\si)^{j-2}_{\xi^3,\dots,\xi^{j}} \ka_H(u).
\end{align*}

If we iterate the computation of $(\na^\si_{\xi^b} \U^{\si,s_u})_{\si(\pi(u))}$ for $j<i$, then we obtain by the same arguments
\begin{gather*}\big((\nabla^\si)^j_{\xi^1,\dots,\xi^j} \U^{\si,s_u}\big)_{\si(\pi(u))}=(-1)^j\ad\big(X^j\big)\cdots \ad\big(X^1\big)(Z_i)\end{gather*}
for the component in $\fg_1\oplus \cdots \oplus \fg_{i-j}$. Thus for $j<i$, we obtain
\begin{gather*}
(\un s_u^*\nabla^\si)^j_{\xi^1,\dots,\xi^j} \ka_H(u) =(\nabla^\si)^{j}_{\xi^1,\dots,\xi^{j}} \ka_H(u)-\ad(X^{j})((\nabla^\si)^{j-1}_{\xi^1,\dots,\xi^{j-1}}(\U^{\si,s_u})_\si).\ka_H(u)\\
\hphantom{(\un s_u^*\nabla^\si)^j_{\xi^1,\dots,\xi^j} \ka_H(u)}{}
=(\nabla^\si)^{j}_{\xi^1,\dots,\xi^{j}} \ka_H(u)
\end{gather*}
and for $j=i$, we obtain
\begin{gather} \label{form2}
(\un s_u^*\nabla^\si)^i_{\xi^1,\dots,\xi^i} \ka_H(u)=(\nabla^\si)^i_{\xi^1,\dots,\xi^i} \ka_H(u)+(-1)^i\ad\big(X^i\big)\cdots \ad\big(X^1\big)(Z_i).\kappa_H(u).
\end{gather}

If we compare the formulas (\ref{form-act}) and (\ref{form2}) for $(\un s_u^*\nabla^\si)^j_{\xi^1,\dots,\xi^j} \ka_H(u)$, we obtain
\begin{gather}\label{gl--zavorka}
(-1)^j(a_1\cdots a_j-1)(\nabla^\si)^j_{\xi^1,\dots,\xi^j} \ka_H(u) =\ad\big(X^j\big)\cdots \ad\big(X^1\big)(Z_i).\kappa_H(u)
\end{gather}
for all $j\leq i.$

If the Weyl structure $\si$ satisf\/ies $(\U^{\si,s_v})_\si(\pi(u))=0$, then we simultaneously have
\begin{gather*}(a_1\cdots a_j-1)(\nabla^\si)^j_{\xi^1,\dots,\xi^j} \ka_H(u)=0\end{gather*}
for all $j\leq i$ if we follow the proof for $s_v$ instead of $s_u$. Thus if $a_1\cdots a_j-1\neq 0$, then $\ad(X^i)\cdots \ad(X^1)(Z_i).\kappa_H(u)=0$. But since $Z_i$ has a trivial component in~$\fg_i^s(1)$, we know that $\ad(X^i)\cdots \ad(X^1)(Z_i)\neq 0$ implies $a_1\cdots a_j-1\neq 0$ and the claim of the proposition holds due to the linearity.
\end{proof}

If we follow the computations from the proof of Proposition \ref{cor-ws} for a Weyl structure $\si$ satisfying $\U^{\si,s_u}(x)=0$, then most of the assumptions on the vector f\/ields $\xi^b$ are vacuous and $(\un s_u^*\nabla^\si)_{\xi} \ka_H(u)=\nabla^{\si}_{\xi} \ka_H(u)$ holds for arbitrary vector f\/ield $\xi$. Therefore we obtain the following corollary using the formula (\ref{form-act}) for $\xi$ from particular eigenspaces of $T_x\un s_u$.

\begin{Corollary} \label{invariantni-ws}
Let $s_u$ be a $($local$)$ $s$-symmetry at $x=p(u)$ on a parabolic geometry and assume $\U^{\si,s_u}(x)=0$. Then we get
\begin{gather*}
\na^\si_{\xi} \ka_H(x)=\na^\si_{\xi_{\rm f\/ix}}\ka_H(x),
\end{gather*}
where $\xi_{\rm f\/ix}\in T_xM$ is the component of $\xi\in T_xM$ such that $(\xi_{\rm f\/ix})_\si(\pi(u))\in \fg_-^s(1)$. In particular, if $\fg_-^s(1)=0$, then $\na^\si_\xi\ka_H(x)=0$ holds for all $\xi\in T_xM$.
\end{Corollary}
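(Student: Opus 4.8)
The plan is to derive Corollary \ref{invariantni-ws} directly from the machinery already set up in the proof of Proposition \ref{cor-ws}, specialized to the hypothesis $\U^{\si,s_u}(x)=0$. Under this hypothesis we have $(\U^{\si,s_u})_\si(\pi(u))=0$, so in the notation of the previous proof all the components $Z_i,\dots,Z_k$ vanish. The first thing I would observe is that this makes the entire correction term $\ad(Z_i)(X^b).\ka_H(u)$ in the change-of-Weyl-connection formula (\ref{change-wc}) disappear, so that $(\un s_u^*\na^\si)_{\xi}\ka_H(u)=\na^\si_{\xi}\ka_H(u)$ holds for \emph{arbitrary} vector field $\xi$, with no restriction that $(\xi)_\si(\pi(u))$ lie in a particular eigenspace. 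This is exactly why the hypotheses on the $\xi^b$ that were needed in Proposition \ref{cor-ws} become vacuous here.

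Next I would invoke the equivariance computation already carried out in the proof of Proposition \ref{cor-ws}: since $s_u$ is an $s$-symmetry, the pushforward $(\un s_u)_*$ acts on $(\xi)_\si(\pi(u))$ by $\Ad(s)^{-1}$. Decomposing $\xi\in T_xM$ according to the eigenvalue decomposition of $\Ad(s)$ on $\fg_-$, write $(\xi)_\si(\pi(u))=\sum_a X_a$ with $X_a\in \fg_-^s(a)$, so that $\xi_{\rm f\/ix}$ is precisely the piece corresponding to $a=1$. Applying the single-derivative version of formula (\ref{form-act}), namely
\begin{gather*}
(\un s_u^*\na^\si)_{\xi}\ka_H(u)=\sum_a a^{-1}\na^\si_{\xi_a}\ka_H(u),
\end{gather*}
and combining with the equality $(\un s_u^*\na^\si)_{\xi}\ka_H(u)=\na^\si_{\xi}\ka_H(u)=\sum_a \na^\si_{\xi_a}\ka_H(u)$ from the first step, I would subtract to obtain
\begin{gather*}
\sum_a \big(a^{-1}-1\big)\na^\si_{\xi_a}\ka_H(u)=0.
\end{gather*}
Since the $\fg_-^s(a)$ for distinct eigenvalues $a$ span independent subspaces and each $\na^\si_{\xi_a}\ka_H(u)$ lives in the part of the associated bundle built from the corresponding eigendata, each summand must vanish separately, forcing $\na^\si_{\xi_a}\ka_H(u)=0$ for every $a\neq 1$. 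Hence only the $a=1$ term survives, which is precisely the asserted identity $\na^\si_\xi\ka_H(x)=\na^\si_{\xi_{\rm f\/ix}}\ka_H(x)$. The final sentence, that $\fg_-^s(1)=0$ forces $\na^\si_\xi\ka_H(x)=0$, is then immediate since $\xi_{\rm f\/ix}=0$ for all $\xi$.

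The main obstacle I anticipate is the bookkeeping of \emph{which} eigenspace a given derivative $\na^\si_{\xi_a}\ka_H(u)$ belongs to, and arguing that the relation $\sum_a(a^{-1}-1)\na^\si_{\xi_a}\ka_H(u)=0$ genuinely separates into independent equations. This requires care because $\ka_H(u)$ is a fixed element of $H^2(\fg_-,\fg)$ and I am varying only the direction $\xi$; the cleanest route is to note that $\xi\mapsto \na^\si_\xi\ka_H(u)$ is linear, so the identity holds for each homogeneous $\xi_a$ individually once we let $\xi$ range over a single eigenspace at a time (taking $\xi$ with $(\xi)_\si(\pi(u))\in\fg_-^s(a)$ makes all $\xi_{a'}=0$ for $a'\neq a$). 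With $\xi$ so chosen, formula (\ref{form-act}) gives $a^{-1}\na^\si_\xi\ka_H(u)=\na^\si_\xi\ka_H(u)$, so $(a^{-1}-1)\na^\si_\xi\ka_H(u)=0$ and the factor $a^{-1}-1$ is nonzero exactly when $a\neq 1$. This direction-by-direction argument sidesteps any subtle linear-independence claim and yields the corollary cleanly.
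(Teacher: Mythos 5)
Your proposal is correct and follows essentially the same route as the paper: the hypothesis $\U^{\si,s_u}(x)=0$ kills the correction terms in the change-of-connection formula~(\ref{change-wc}) so that $(\un s_u^*\na^\si)_\xi\ka_H(u)=\na^\si_\xi\ka_H(u)$ for arbitrary $\xi$, and then formula~(\ref{form-act}) applied direction by direction on the eigenspaces $\fg_-^s(a)$ forces $\na^\si_{\xi_a}\ka_H(u)=0$ for all $a\neq 1$. Your closing observation that one should test $\xi$ one eigenspace at a time (so that linearity, rather than any independence claim, yields the splitting) is exactly how the paper's argument ``for $\xi$ from particular eigenspaces of $T_x\un s_u$'' is meant to be read.
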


\begin{Remark}The authors showed in \cite{RT} and \cite{DR} that there are projective and conformal geometries satisfying $\na^\si\ka_H(x)=0$ for all $x\in M$ for a suitable Weyl connection $\na^\si$, but $(M,\na^\si)$ are not an af\/f\/ine locally symmetric spaces. Therefore Theorem \ref{t1.1} implies that the condition $\na^\si\ka_H=0$ is necessarily satisf\/ied on (locally) $m$-symmetric parabolic geometries, but is not suf\/f\/icient to distinguish the (locally) $m$-symmetric parabolic geometries among the geometries satisfying $\na^\si\ka_H=0$.
\end{Remark}

\subsection[The characterization of triples that are prolongation rigid outside of the 1-eigenspace of $s$]{The characterization of triples that are prolongation rigid\\ outside of the 1-eigenspace of $\boldsymbol{s}$}\label{sec4.2}

We can estimate the dimension of $\operatorname{pr}(\kappa_H(u))_i$ in the following way: The result of \cite[Proposition~3.1.1]{KT} states that the dimension of $\operatorname{ann}(\kappa_H(u))$ is bounded by the dimension of the annihilator $\mathfrak{a}_0:=\cap_{\phi_0} \operatorname{ann}(\phi_0)$ of all minus lowest weights $\phi_0$ in (the complexif\/ication of) all irreducible $\fg_0$-modules in which $\kappa_H(u)$ has a non-zero component. Moreover, the dimension of $\operatorname{pr}(\kappa_H(u))_i$ is bounded by the dimension of the prolongation $\mathfrak{a}_i:=\cap_{\phi_0} \operatorname{pr}(\phi_0)_i$ of $\mathfrak{a}_0$. The main result of \cite[Theorem~3.3.3 and Recipe~7]{KT} states that there is a semisimple Lie subalgebra $\bar \fg$ of $\fg$ and a~parabolic subalgebra $\bar \fp$ of $\bar \fg$ such that $\mathfrak{a}_i=\bar \fg_i$ for $i>0$.

Let us prove that these estimates are compatible with the decomposition of $\fg_i$ into $\fg_0$-submodules, which allows us to characterize the triples $(\fg,\fp,\mu)$ that are prolongation rigid outside of the $1$-eigenspace of~$s$.

\begin{Proposition} \label{multigrad}
Suppose $Z\in \operatorname{pr}(\kappa_H(u))_i$ decomposes as $Z=Z_a+Z_b$ for $Z_a,Z_b$ in different $\fg_0$-submodules of $\fg_i$. Then $Z_a\in \operatorname{pr}(\kappa_H(u))_i$ and $Z_b\in \operatorname{pr}(\kappa_H(u))_i$.

Therefore the triple $(\fg,\fp,\mu)$ is prolongation rigid outside of the $1$-eigenspace of $s$ if and only if $\mathfrak{a}_i$ corresponding to $\mu$ is a subspace of $\fg_i^s(1)$ for all $i$.
\end{Proposition}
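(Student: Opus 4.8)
The plan is to exploit the refinement of the $|k|$-grading by the full center $Z(\fg_0)$ of $\fg_0$ — the \emph{multigrading} used in \cite{KT} — for which the irreducible $\fg_0$-submodules of each $\fg_i$ are exactly the joint eigenspaces (weight spaces) $\fg_{i,\alpha}$ of the action of $Z(\fg_0)$, the decomposition being multiplicity free. Thus the hypothesis ``$Z_a,Z_b$ in different $\fg_0$-submodules of $\fg_i$'' means $Z_a\in\fg_{i,\alpha}$ and $Z_b\in\fg_{i,\beta}$ with $\alpha\neq\beta$. Two structural observations come first. Since $Z(\fg_0)$ is central in $\fg_0$, the subalgebra $\fg_0$ lies entirely in multidegree $0$. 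Since each $s\in Z(\fg_0)$ acts on $\fg_{i,\alpha}$ by a single scalar, the $1$-eigenspace $\fg_i^s(1)$ is a sum of multigrading components; and by \cite[Theorem~3.3.3 and Recipe~7]{KT} the prolongation $\mathfrak{a}_i=\bar\fg_i$ is likewise a sum of such components.

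For the first assertion I would argue by selecting weight-homogeneous brackets. Decomposing $\fg_{-1}=\oplus_\gamma\fg_{-1,\gamma}$ and using multilinearity, $Z\in\operatorname{pr}(\kappa_H(u))_i$ holds if and only if $\ad(X_1)\cdots\ad(X_i)(Z).\kappa_H(u)=0$ for all homogeneous $X_b\in\fg_{-1,\gamma_b}$. For such a tuple the iterated bracket shifts the multidegree by $\sum_b\gamma_b$, so $\ad(X_1)\cdots\ad(X_i)(Z_c)$ for $Z_c\in\fg_{i,\delta}$ lands in multidegree $\delta+\sum_b\gamma_b$; as $\fg_0$ sits in multidegree $0$, this vanishes unless $\delta=-\sum_b\gamma_b$. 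Hence for a fixed homogeneous tuple at most one multigrading component of $Z$ survives, and the defining condition collapses to $\ad(X_1)\cdots\ad(X_i)(Z_{-\sum_b\gamma_b}).\kappa_H(u)=0$. Since $\alpha\neq\beta$, choosing tuples with $\sum_b\gamma_b=-\alpha$ and with $\sum_b\gamma_b=-\beta$ isolates $Z_a$ and $Z_b$ respectively, so each lies in $\operatorname{pr}(\kappa_H(u))_i$ separately, which is the claim. The pleasant feature is that the cross terms vanish already inside $\fg_0$, so no weight analysis of $\kappa_H(u)$ within $\mu$ is required.

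For the ``therefore'' I would combine this splitting with the multigrading-homogeneity of $\mathfrak{a}_i$. By the first part every $\operatorname{pr}(\phi)_i$ decomposes as the direct sum of the subspaces $\operatorname{pr}(\phi)_i\cap\fg_{i,\alpha}$, so that $\operatorname{pr}(\phi)_i\subseteq\fg_i^s(1)$ is equivalent to $\operatorname{pr}(\phi)_i\cap\fg_{i,\alpha}=0$ for every component with $s|_{\fg_{i,\alpha}}\neq 1$. The implication ``prolongation rigid outside the $1$-eigenspace of $s$ $\Rightarrow$ $\mathfrak{a}_i\subseteq\fg_i^s(1)$'' is then immediate, because $\mathfrak{a}_i=\operatorname{pr}(\phi_0)_i$ is realized by the extremal weight vector $\phi_0$ of $\mu$, which is among the weights occurring in the rigidity condition. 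For the converse one must show that any component $\fg_{i,\alpha}$ meeting some $\operatorname{pr}(\phi)_i$ nontrivially already lies in $\mathfrak{a}_i$; I expect this to be the main obstacle, since it amounts to transferring the extremal-weight computation of the maximal prolongation $\mathfrak{a}_i=\bar\fg_i$ in \cite[Proposition~3.1.1]{KT} to an arbitrary weight $\phi\in\mu$. I would close it by combining the $\fg_0$-equivariance of the iterated-bracket map with the dimension bound $\dim\operatorname{pr}(\kappa_H(u))_i\le\dim\mathfrak{a}_i$ from \cite[Proposition~3.1.1]{KT}: componentwise this forces the multigrading support of every $\operatorname{pr}(\phi)_i$ into that of $\mathfrak{a}_i$, so no component outside $\mathfrak{a}_i$ can contribute and the stated equivalence follows.
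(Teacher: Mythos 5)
Your proposal is correct and follows essentially the same route as the paper: the multigrading you define via the joint eigenspaces of $Z(\fg_0)$ coincides with the paper's multigrading by heights with respect to the simple roots crossed in $\Sigma$, your isolation of components by choosing multidegree-homogeneous tuples $X_b$ and using that $\fg_0$ sits in multidegree zero is exactly the paper's argument for the first claim, and your componentwise transfer of the bound from \cite[Proposition~3.1.1]{KT} together with \cite[Theorem~3.3.3]{KT} is precisely how the paper deduces the equivalence. The only cosmetic slip is the phrase that a component meeting $\operatorname{pr}(\phi)_i$ ``already lies in $\mathfrak{a}_i$'' --- what the componentwise bound gives, and what you correctly use in your closing sentence, is that the component meets $\mathfrak{a}_i$ nontrivially, which suffices since $s$ acts by a single scalar on each component.
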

\begin{proof}
Let $(\alpha_1,\dots,\alpha_j)$ be an ordering of simple positive roots of $\fg$ such that the root space~$\fg_{\alpha_r}$ satisf\/ies $\fg_{\alpha_r}\in \fg_1$. Then we can uniquely assign a $j$-tuple $(a_1,\dots,a_j)$ to each irreducible $\fg_0$-component of~$\fg_i$, where $a_\ell$ is the height of all root spaces in the $\fg_0$-component with respect to~$\alpha_\ell$. This def\/ines a multigrading of $\fg$ and the Lie bracket in $\fg$ is multigraded.

Let us decompose the element $Z\in \operatorname{pr}(\kappa_H(u))_i$ as the sum of the elements $\sum Z_{(b_1,\dots,b_j)}$ over all possible $j$-tuples with respect to this multigrading. Similarly, let us decompose the module $\otimes^i \fg_{-1}$ as the sum of modules $\oplus \mathfrak{n}_{(a_1,\dots,a_j)}$ over all possible $j$-tuples with respect to this multigrading.
The multigrading of $\fg_0$ is of the form $(0,\dots, 0)$, and therefore,
\begin{gather*}\ad^i(X)(Z)=\sum \ad^i(X_{(a_1,\dots,a_j)}) \Big(\sum Z_{(b_1,\dots,b_j)}\Big)=\sum \ad^i(X_{(-b_1,\dots,-b_j)}) (Z_{(b_1,\dots,b_j)})\end{gather*}
holds for all $X=\sum X_{(a_1,\dots,a_j)} \in \oplus \mathfrak{n}_{(a_1,\dots,a_j)}$. Thus we get that \begin{gather*}\ad^i\big(X_{(-b_1,\dots,-b_j)}\big) \big(Z_{(b_1,\dots,b_j)}\big)\in \operatorname{ann}(\kappa_H(u))\end{gather*} holds for all $X=X_{(-b_1,\dots,-b_j)}\in \mathfrak{n}_{(-b_1,\dots,-b_j)}$.
Thus $Z_{(b_1,\dots,b_j)}\in \operatorname{pr}(\kappa_H(u))_i$ follows from the linearity for all components $Z_{(b_1,\dots,b_j)}$ of $Z$.

The f\/irst claim implies that the proof of \cite[Proposition~3.1.1]{KT} can be carried separately for each component of $\operatorname{pr}(\kappa_H(u))_i$ in $\fg_0$-submodule in~$\fg_i$ and thus the second claim follows from \cite[Theorem~3.3.3]{KT}.
\end{proof}

One can f\/ind in \cite[Appendix C]{GZ3} tables containing the classif\/ication of the triples $(\fg,\fp,\mu)$ such that $\mu$ is contained in the $1$-eigenspace of $s$ for some $s\in Z(G_0)$ (dif\/ferent from identity), the classif\/ication of the modules $\mathfrak{a}_i$ and the classif\/ication of the $1$-eigenspaces of $s$ in~$\fp_+$. This allows us to classify all triples $(\fg,\fp,\mu)$ that are prolongation rigid outside of the $1$-eigenspace of~$s$ such that $\mu$ is contained in the $1$-eigenspace of~$s$.

We would like to present the classif\/ication together with additional properties of the corresponding (locally) $s$-symmetric parabolic geometries. Therefore we postpone the classif\/ication to Section \ref{sec61} and continue by looking on geometric properties of generic (locally) $s$-symmetric parabolic geometries.

\section{Geometric properties of parabolic geometries of general types} \label{sec3}
We present here geometric properties that are common for (locally) $s$-symmetric parabolic geometries for triples $(\fg,\fp,\mu)$ that are prolongation rigid outside of the $1$-eigenspace of~$s$. In particular, we prove Theorem~\ref{t1.3}. In order to prove that Claim~(3) implies Claim~(2), we discuss in Section~\ref{sec4.1} when a geodesic transformation~$s_x^\si$ of a Weyl connection~$\nabla^\si$ preserves the parabolic geometry. Claim~(1) follows trivially from Claim~(2) and we discuss the remaining implication in Section~\ref{sec4.3}.

\subsection{Automorphisms and normal coordinate systems of Weyl connections}\label{sec4.1}

Let us describe the (local) dif\/feomorphisms $s_x^\si$ in detail. We know from the formula~(\ref{geod}) that the (local) dif\/feomor\-phism~$s^{\si}_x$ of $M$ def\/ined by the formula
\begin{gather}
s^{\si}_x\big(p_0 \circ \Fl^{(\om_-^\si\oplus\om_{0}^\si)^{-1}(\xi(x))_\si(u_0)}_1(u_0)\big): = p_0 \circ \Fl^{(\om_-^\si\oplus\om_{0}^\si)^{-1}\Ad(s)(\xi(x))_\si(u_0)}_1(u_0)\nonumber\\
\hphantom{s^{\si}_x\big(p_0 \circ \Fl^{(\om_-^\si\oplus\om_{0}^\si)^{-1}(\xi(x))_\si(u_0)}_1(u_0)\big)}{}
= p_0 \circ \Fl^{(\om_-^\si\oplus\om_{0}^\si)^{-1}(\xi(x))_\si(u_0)}_1(u_0s)\label{geodf}
\end{gather}
for some $u_0\in (\ba_0)_x$ does not depend on the choice of $u_0\in (\ba_0)_x$. So $s_x^\si$ is the unique (local) dif\/feomorphism with coordinates $\Ad(s)\in {\rm Gl}(\fg_-)$ for $s\in Z(G_0)$ in the normal coordinate system for the Weyl connection $\nabla^\si$ given by some~$u_0\in \ba_0$.

We also know from Proposition \ref{prop2.4} that for a normal Weyl structure $\nu_u$ for $u\in\ba_x$, the equality
\begin{gather*}s^{\nu_u}_x=\un s_u\end{gather*}
holds. Thus $s^{\nu_u}_x$ preserves the parabolic geometry (and therefore $s_u$ is a (local) automorphism of the parabolic geometry) if and only if $(s^{\nu_u}_x)^*\nabla^{\nu_u}=\nabla^{\nu_{u}}$ holds.

The situation is dif\/ferent for a general Weyl structure $\si$ and the following proposition gives a suf\/f\/icient condition for $s^\si_x$ to be a~(local) $s$-symmetry.

\begin{Proposition} \label{geodesic-s-symm}
Assume the $($local$)$ diffeomorphism $s^{\si}_x$ satisfies
\begin{itemize}\itemsep=0pt
\item $(s^{\si}_x)^*\nabla^\si=\nabla^{\si\exp (\U)_\si}$ for some $1$-form $\U$ on $M$,
and
\item $\U(x)=0$.
\end{itemize}
Then $s_{\si(u_0)}$ is a $($local$)$ $s$-symmetry at $x$ for all~$u_0$ in the fiber over~$x$ such that $\U^{\si,s_{\si(u_0)}}=-\U$, and $\un s_{\si(u_0)}=s^{\si}_x$, i.e., $s^{\si}_x$
preserves the parabolic geometry.
\end{Proposition}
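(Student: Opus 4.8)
The plan is to reduce the statement to the already-established criterion of Proposition~\ref{prop2.4}, namely that $s^{\nu_{u}}_x = \un s_{u}$ is a (local) automorphism precisely when $(s^{\nu_u}_x)^*\nabla^{\nu_u}=\nabla^{\nu_u}$ holds for a normal Weyl structure. The key observation is that the hypotheses single out exactly those points $u_0$ in the fibre over $x$ for which the general Weyl structure $\si$ agrees with a normal Weyl structure \emph{to first order at $x$}, and first-order agreement at $x$ is all that the geodesic transformation $s^\si_x$ sees. Concretely, for $u_0\in(\ba_0)_x$ put $u:=\si(u_0)\in\ba_x$, so that $s^\si_x$ has coordinates $\Ad(s)$ in the normal coordinate system of $\nabla^\si$ given by $u_0$, and $s_u=s_{\si(u_0)}$ is the candidate $s$-symmetry. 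By construction (\ref{geod}) of $\nabla^\si$-geodesics and the definition of normal Weyl structures, the generalized geodesics through $x$ determined by $u$ and the $\nabla^\si$-geodesics through $x$ have the same $1$-jet at $x$; hence $T_x s^\si_x = T_x \un s_u$, and the coordinate $\Ad(s)$ is shared.

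First I would fix $u_0$ and set $u=\si(u_0)$, and I would use the hypothesis $\U(x)=0$ to locate $u$ relative to the normal Weyl structure $\nu_u$. Since $\nu_u(\pi(u))=u=\si(\pi(u))$, both $\si$ and $\nu_u$ send $\pi(u)$ to the same point $u$, so the $1$-form $\U^{\nu_u,\si}$ measuring their difference (via $\si=\nu_u\exp(\U^{\nu_u,\si})_{\nu_u}$) vanishes at $x$. Consequently $(s^\si_x)^*\nabla^{\nu_u}$ and $(s^\si_x)^*\nabla^{\si}$ differ by a $1$-form that vanishes at $x$ (apply (\ref{change-wc}) and note the defining functions agree to zeroth order at $\pi(u)$), and likewise $\nabla^{\nu_u}$ and $\nabla^\si$ agree at $x$. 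Combining this with the assumption $(s^\si_x)^*\nabla^\si=\nabla^{\si\exp(\U)_\si}$ together with $\U(x)=0$, I would conclude
\begin{gather*}
(s^\si_x)^*\nabla^{\nu_u}(x)=\nabla^{\nu_u}(x).
\end{gather*}

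The main obstacle is upgrading this pointwise identity at $x$ to the equality of connections needed to invoke Proposition~\ref{prop2.4}, and identifying $s^\si_x$ with $\un s_u$ rather than merely with a map sharing its $1$-jet. Here I would exploit the rigidity of normal Weyl structures recorded in (\ref{autpos})--(\ref{geodf}): the diffeomorphism $s^{\nu_u}_x=\un s_u$ is the \emph{unique} map with coordinates $\Ad(s)$ in the normal coordinate system given by $u$, and $s^\si_x$ has coordinates $\Ad(s)$ in the normal coordinate system of $\nabla^\si$ given by $u_0$. Because the two normal coordinate systems have the same $1$-jet at $x$, the maps $s^\si_x$ and $\un s_u$ have the same $1$-jet at $x$; then by Proposition~\ref{prop2.4}, applied with the coordinate $\Ad(s)\in{\rm Gl}(\fg_-)$, the condition $(s^\si_x)^*\nabla^{\nu_u}(x)=\nabla^{\nu_u}(x)$ forces $s^\si_x=\un s_u=\un s_{\si(u_0)}$ and shows $s_{\si(u_0)}$ is a (local) $s$-symmetry. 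Finally, to read off $\U^{\si,s_{\si(u_0)}}=-\U$, I would compare $(s^\si_x)^*\si = \si\exp(-(\U^{\si,s_{\si(u_0)}})_\si)$ from (\ref{action}) with the hypothesis $(s^\si_x)^*\nabla^\si=\nabla^{\si\exp(\U)_\si}$, using that $f^*\nabla^\si=\nabla^{\si\exp(-(\U^{\si,\va})_\si)}$ for $f=\un\va$ as established after Lemma~\ref{lem3}; matching the two Weyl structures yields $\U^{\si,s_{\si(u_0)}}=-\U$ directly.
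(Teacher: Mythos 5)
Your reduction to Proposition~\ref{prop2.4} contains a genuine gap at precisely the point you flag as ``the main obstacle,'' and the proposed resolution does not close it. Two things go wrong. First, Proposition~\ref{prop2.4} requires the equality $f^*\nabla^{\nu_u}=\nabla^{\nu_u}$ as connections on a neighbourhood of $x$, whereas your argument only yields the pointwise identity $(s^\si_x)^*\nabla^{\nu_u}(x)=\nabla^{\nu_u}(x)$. Agreement of two connections at a single point, even combined with the fact that $s^\si_x$ and $\un s_u$ share a $1$-jet at $x$, does not make $s^\si_x$ an affine transformation of $\nabla^{\nu_u}$ and hence does not force $s^\si_x=\un s_u$; the uniqueness ``an affine map is determined by its $1$-jet'' is only available \emph{after} one knows the map is affine. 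Second, to invoke Proposition~\ref{prop2.4} for $f=s^\si_x$ you must know that $f$ has coordinates $\Ad(s)$ in the normal coordinate system of the \emph{parabolic geometry} given by $u$, i.e., in the generalized-geodesic coordinates $p\circ\Fl_1^{\om^{-1}(X)}(u)$. But $s^\si_x$ is by definition linear in the normal coordinates of the Weyl connection $\nabla^\si$, and these two coordinate systems agree only to first order at $x$; having coordinates $\Ad(s)$ in one does not transfer to the other unless $s^\si_x=\un s_{\si(u_0)}$ already holds --- which is the conclusion you are proving, so the application is circular. In effect you discard the only non-pointwise hypothesis, $(s^\si_x)^*\nabla^\si=\nabla^{\si\exp(\U)_\si}$, after evaluating it at $x$.

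The paper instead uses that hypothesis in full, via Proposition~\ref{auto} rather than Proposition~\ref{prop2.4}: the equality $(s^\si_x)^*\nabla^\si=\nabla^{\si\exp(\U)_\si}$ is exactly the first condition of Proposition~\ref{auto} with $\si'=\si\exp(\U)_\si$ (an equality of two Weyl connections on a neighbourhood), and since $\U(x)=0$ the reductions $\iota_\si$ and $\iota_{\si\exp(\U)_\si}$ of $\ba_0$ into $\mathcal{P}^1M$ coincide in the fiber over $x$, so formula~(\ref{geodf}) shows that $\mathcal{P}^1s^\si_x$ maps the frame $\iota_{\si\exp(\U)_\si}(u_0)=\iota_\si(u_0)$ onto $\iota_\si(u_0s)$, which is the second condition. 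Hence $s^\si_x$ preserves the parabolic geometry, and then Proposition~\ref{auto-action} (again using $\U(x)=0$) shows the covering automorphism maps $\si(u_0)$ to $\si(u_0)s$, so it coincides with $s_{\si(u_0)}$ by~(\ref{autpos}). If you wanted to keep your detour through $\nu_u$, you would have to establish $(s^\si_x)^*\nabla^{\nu_u}=\nabla^{\nu_u}$ on a whole neighbourhood, which is not easier than the proposition itself. A secondary issue: your final step deduces $\U^{\si,s_{\si(u_0)}}=-\U$ by matching two Weyl structures that induce the same pullback connection on $TM$; this tacitly assumes injectivity of $\si\mapsto\nabla^\si$, which at least requires justification in general grading, whereas in the paper's argument the explicit identification of the covering map makes this comparison unnecessary.
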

\begin{proof}
Suppose $(s^{\si}_x)^*\nabla^\si=\nabla^{\si\exp(\U)_\si}$ holds for $\U$ such that $\U(x)=0$. Then the inclusions $\iota_{\si}$ and $\iota_{\si\exp(\U)_\si}$ of $\ba_0$ into $\mathcal{P}^1M$ coincide in the f\/iber over $x$ by the assumption $\U(x)=0$. Thus the formula~(\ref{geodf}) implies that $\mathcal{P}^1s^{\si}_x$ maps the frames $\iota_{\si}(u_0)=\iota_{\si\exp (\U)_\si}(u_0)$ in the f\/iber over $x$ onto frames $\iota_{\si}(u_0s)=\iota_{\si\exp(\U)_\si}(u_0s)$. Therefore the conditions of Proposition~\ref{auto} are satisf\/ied and $s^{\si}_x$ preserves the parabolic geometry. Since $\U(x)=0$, it follows from Proposition~\ref{auto-action} that the covering of $s^{\si}_x$ maps $\si(u_0)$ onto $\si(u_0)s$ and thus coincides with $s_{\si(u_0)}$ due to the for\-mu\-la~(\ref{autpos}).
\end{proof}

In particular, if there is an $\un S$-invariant class of Weyl connections, then all (local) dif\/feo\-mor\-phisms~$\un S(x)$ for all $x\in M$ satisfy the conditions of Proposition~\ref{geodesic-s-symm} and therefore Claim~(3) of Theorem~\ref{t1.3} implies Claim~(2) of Theorem~\ref{t1.3}.

A consequence of Propositions \ref{geodesic-s-symm} and~\ref{auto-action} is that the condition $\U^{\si,s_u}(p(u))=0$ is necessary for the equality $\un s_u=s^\si_{p(u)}$ to hold for $s$-symmetry $s_u$ at~$p(u)$. On the other hand, it is clear that the condition $\U^{\si,s_u}(p(u))=0$ is far from being suf\/f\/icient. There is the following consequence of the fact that the af\/f\/ine maps are determined by the image of a single point in $\iota_\si(\ba_0)\subset \mathcal{P}^1M$.

\begin{Corollary}\label{c3.3}
Let $s_u$ be a $($local$)$ $s$-symmetry at $x$ and assume $\U^{\si,s_u}\equiv 0$ holds for some Weyl structu\-re~$\si$. Then $\un s_u=s_x^{\si}$.
\end{Corollary}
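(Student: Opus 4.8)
The plan is to show that the hypothesis $\U^{\si,s_u}\equiv 0$ forces $f:=\un s_u$ to be a (local) affine transformation of the Weyl connection $\nabla^\si$ whose $1$-jet at $x$ is precisely the linear map entering the definition of $s^\si_x$; the claim then follows because such an affine map is determined by its action on the geodesics of $\nabla^\si$ issuing from the single point $x$ (equivalently, by the image of a single frame in $\iota_\si(\ba_0)$).

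First I would record what the hypothesis buys. By part~(1) of Proposition~\ref{auto-action}, $\U^{\si,s_u}\equiv 0$ is equivalent to $s_u^*\si=\si$, i.e.\ $s_u\circ\si=\si\circ (s_u)_0$ for the underlying $G_0$-bundle morphism $(s_u)_0$ of $s_u$. Since $s_u$ is a (local) automorphism, the relation $f^*\nabla^\si=\nabla^{s_u^*\si}=\nabla^\si$ holds, so $f$ preserves $\nabla^\si$ and is a (local) affine transformation of $\nabla^\si$ fixing $x$. It is worth stressing that this step uses the vanishing of $\U^{\si,s_u}$ at \emph{every} point: had $\U^{\si,s_u}$ vanished only at $x$, the connection $\nabla^{\si\exp(-(\U^{\si,s_u})_\si)}$ would agree with $\nabla^\si$ only to lowest order at $x$ and $f$ would fail to be affine, which is exactly why the pointwise condition $\U^{\si,s_u}(x)=0$ is insufficient.

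Next I would pin down the $1$-jet of $f$ on the first-order frame bundle. Because $s_u$ preserves the parabolic geometry and $s_u^*\si=\si$, the computation in the proof of Proposition~\ref{auto} (taking $\si'=\si$) yields $\mathcal{P}^1f\circ\iota_\si=\iota_\si\circ (s_u)_0$. Writing $u_0:=\pi(u)\in(\ba_0)_x$ and using $s_u(u)=us$ together with the $G_0$-equivariance of $\pi$, I get $(s_u)_0(u_0)=\pi(s_u(u))=\pi(us)=u_0\, s$, hence $\mathcal{P}^1f(\iota_\si(u_0))=\iota_\si(u_0 s)=\iota_\si(u_0)\cdot\un \Ad(s)$. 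This identity says precisely that $T_xf$ has coordinates $\Ad(s)\in{\rm Gl}(\fg_-)$ in the frame $\iota_\si(u_0)$ of $T_xM$, which is the same frame and the same linear part occurring in the definition~(\ref{geodf}) of $s^\si_x$.

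Finally I would combine the two facts. Being affine for $\nabla^\si$ and fixing $x$, the map $f$ sends the geodesic of $\nabla^\si$ issuing from $x$ with velocity $\xi(x)$ to the geodesic issuing from $x$ with velocity $T_xf(\xi(x))$; by the previous step $T_xf$ acts as $\Ad(s)$ in the frame $\iota_\si(u_0)$, so this is exactly the geodesic prescribed by formula~(\ref{geodf}) for $s^\si_x$. Thus $f$ and $s^\si_x$ agree on every geodesic of $\nabla^\si$ through $x$, and since these geodesics furnish the normal coordinate system of $\nabla^\si$ at $x$ and hence fill a neighbourhood of $x$, we conclude $\un s_u=f=s^\si_x$. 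I expect the only delicate point to be the bookkeeping in the third-from-last step: verifying that the single-frame identity $\mathcal{P}^1f(\iota_\si(u_0))=\iota_\si(u_0 s)$ genuinely expresses $T_xf$ in the very frame $\iota_\si(u_0)$ appearing in~(\ref{geodf}), so that the linear parts match and the affine map is uniquely determined.
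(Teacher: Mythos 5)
Your proof is correct and takes essentially the same approach the paper intends: the paper justifies Corollary~\ref{c3.3} solely by the remark that affine maps are determined by the image of a single frame in $\iota_\si(\ba_0)\subset\mathcal{P}^1M$, and your three steps (from $\U^{\si,s_u}\equiv 0$ deduce $\un s_u^*\nabla^\si=\nabla^\si$, establish the frame identity $\mathcal{P}^1\un s_u(\iota_\si(u_0))=\iota_\si(u_0s)$ with $u_0=\pi(u)$, then compare with $s_x^\si$ along the geodesics of $\nabla^\si$ via the formula~(\ref{geodf})) are exactly the details behind that remark. Your aside that the \emph{everywhere} vanishing of $\U^{\si,s_u}$ is what makes $\un s_u$ affine, while vanishing only at $x$ would not suffice, matches the paper's own caveat preceding the corollary.
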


\subsection[The prolongation rigidity for $s$-symmetric parabolic geometries]{The prolongation rigidity for $\boldsymbol{s}$-symmetric parabolic geometries}\label{sec4.3}

Let $(\fg,\fp,\mu)$ be prolongation rigid outside of the $1$-eigenspace of $s$. Let $U\subset M$ be the open subset of $M$ consisting of points $x$ such that $\kappa_H(x)$ has a non-zero component in the $\fg_0$-module given by $\mu$. If the parabolic geometry is (locally) $s$-symmetric, then there is a unique (local) $s$-symmetry $s_u$ at each point of~$U$, i.e., there is the unique system~$S$ of (local) $s$-symmetries on~$U$. This means that if there is an almost $\un S$-invariant Weyl connection on~$U$, then the system~$\un S$ coincides (due to uniqueness) with the system of (local) dif\/feomorphisms $\un s_u$. We call a Weyl structure $\si$ (almost) $S$-invariant (at $x$) if $\nabla^\si$ is (almost) $\un S$-invariant Weyl connection (at~$x$).

The uniqueness of $s$-symmetries on $U$ has the following consequences in the case $U=M$.

\begin{Proposition} \label{almost-ws}
Assume $(\fg,\fp,\mu)$ is prolongation rigid outside of the $1$-eigenspace of~$s$ and $\kappa_H(x)$ has a non-zero component in the $\fg_0$-module given by $\mu$ at all $x\in M$. Let $S$ be the unique system of $($local$)$ $s$-symmetries on the $($locally$)$ $s$-symmetric parabolic geometry $(\ba\to M,\om)$ of type~$(G,P)$. Then:
\begin{enumerate}\itemsep=0pt
\item[$1.$] There exists an almost $S$-invariant Weyl structure $\si$ and the map $S$ is smooth.
\item[$2.$] If $\si$ is an almost $S$-invariant Weyl structure, then $\si\exp(\U)_\si$ is an almost $S$-invariant Weyl structure if and only if $(\U)_\si$ has its values in $\fp^s_+(1)$.
\item[$3.$] For each $x\in M$, there is a local almost $S$-invariant Weyl structure $\si$, which is invariant at $x$, and $\un S(x)=s_x^{\si}$ holds.
\item[$4.$] The equality $S(p_0(u_0))=s_{\si(u_0)}$ holds for each almost $S$-invariant Weyl structure $\si$ for all $u_0\in \ba_0$.
\item[$5.$] The equality $S(x)\circ S(y)\circ S(x)^{-1}=S(\un S(x)(y))$ holds for $x,y\in M$, where the compositions are defined.
\item[$6.$] For each eigenvalue $a$, the union of the $a$-eigenspaces $T_xM^s(a)$ of $T_x\un S(x)$ in $T_xM$ over all $x\in M$ defines a distribution $TM^s(a)$ on $M$ that is preserved by all $($local$)$ $s$-symmetries for each~$a$.
\item[$7.$] The equality $TM^s(a)=Tp_0\circ (\om^\si_-+\om^\si_0)^{-1}(\fg_-^s(a))$ holds for each almost $S$-invariant Weyl structure~$\si$.
\item[$8.$] The decomposition $TM=\oplus_a TM^s(a)$ is preserved by all almost $\un S$-inva\-riant Weyl connections $\nabla^\si$.
\item[$9.$] All almost $\un S$-invariant Weyl connections restrict to the same partial linear connection on~$TM$ corresponding to the distribution $\oplus_{a\neq 1} TM^s(a)$.
\end{enumerate}
\end{Proposition}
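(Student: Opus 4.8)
The nine assertions all revolve around the single $1$-forms $\U^{\si,s_u}$ that record how an $s$-symmetry acts on a Weyl structure, so the plan is to prove the existence-and-smoothness statement~(1) first and then read off (2)--(9) from it. Throughout I use that, under the standing hypotheses, Theorem~\ref{t1.2} already supplies a \emph{unique} $s$-symmetry $S(x)$ at every $x\in M$; thus ``almost $S$-invariant'' (in the sense of Definition~\ref{almost-invariant-ws}) just means a Weyl structure $\si$ with $\U^{\si,S(x)}(x)=0$ for all $x$, and all the freedom is controlled by Proposition~\ref{auto-action}.

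For~(1) I would fix an arbitrary smooth Weyl structure $\si$ and cancel $(\U^{\si,S(x)})_\si(\pi(u))$ degree by degree, from the lowest up. By Proposition~\ref{auto-action}(3) its lowest non-zero component $Z_i$ lies in the complement of $\fg_i^s(1)$, which is exactly the image of $\id-\Ad(s)^{-1}$ on $\fg_i$; a change of Weyl structure with lowest term $w_i$ shifts $Z_i$ by $(\id-\Ad(s)^{-1})(w_i)$ (Lemma~\ref{lem3}), so $Z_i$ can be cancelled, uniquely modulo $\fg_i^s(1)$. To make this simultaneous over $x$ and smooth before $S$ is known to be smooth, I use formula~(\ref{gl--zavorka}) for the single symmetry $S(x)=s_u$, which expresses $\ad(X^i)\cdots\ad(X^1)(Z_i).\kappa_H(u)$ through the smooth quantities $(\nabla^\si)^i\kappa_H$; the prolongation-rigidity hypothesis, i.e.\ $\operatorname{pr}(\kappa_H(u))_i\subset\fg_i^s(1)$ via Proposition~\ref{multigrad}, makes this map injective on the complement of $\fg_i^s(1)$ where $Z_i$ lives, so $Z_i$ and hence the correction depend smoothly on $x$, with residual ambiguity the smooth subbundle of fiber $\fp_+^s(1)$. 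Iterating over the finitely many degrees yields a smooth almost $S$-invariant $\si$; smoothness of $S$ then follows from~(4). The simultaneous-over-$x$ bookkeeping of this correction is the main obstacle.

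Parts~(2)--(4) are then formal. For~(2), Lemma~\ref{lem3} rewrites $(\U^{\si\exp(\U)_\si,S(x)})$ at $\pi(u)$ as $C(-\Ad(s)^{-1}((\U)_\si(\pi(u))),(\U)_\si(\pi(u)))$ once $\U^{\si,S(x)}(x)=0$, and the BCH/eigenvalue argument from the end of Proposition~\ref{auto-action} shows this vanishes exactly when $(\U)_\si\in\fp_+^s(1)$. For~(4), both $S(x)$ and $s_{\si(u_0)}$ are $s$-symmetries at $x$ with vanishing $\U^{\si,\cdot}(x)$, so the uniqueness clause of Proposition~\ref{auto-action} identifies them. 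For~(3), the identity $\un S(x)=s_x^\si$ already holds for every almost $S$-invariant $\si$ by Proposition~\ref{geodesic-s-symm} (applied with $\U=-\U^{\si,S(x)}$, which vanishes at $x$) together with~(4); strong invariance at $x$ is then arranged by passing, within the $\fp_+^s(1)$-freedom of~(2), to the normal Weyl structure $\nu_u$ with $S(x)=s_u$, which is genuinely $s_u$-invariant because $s_u^*\nu_u=\nu_{us^{-1}}=\nu_u$, whence Corollary~\ref{c3.3} applies.

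The remaining assertions are algebraic. Part~(5) follows because $S(x)$ is an automorphism and $P$-bundle maps are $P$-equivariant, so $S(x)\circ s_v\circ S(x)^{-1}=s_{S(x)(v)}$ is an $s$-symmetry at $\un S(x)(y)$, hence equal to $S(\un S(x)(y))$ by uniqueness. For~(6) and~(7), part~(3) gives $T_x\un S(x)=T_xs_x^\si$, which in the frame $\iota_\si(u_0)$ reads as $\Ad(s)$ on $\fg_-$, so its $a$-eigenspace is $Tp_0\circ(\om_-^\si+\om_0^\si)^{-1}(\fg_-^s(a))$; as $\dim\fg_-^s(a)$ is constant and $S$ is smooth these are smooth distributions, and~(5) shows every $s$-symmetry maps them to themselves. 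For~(8), $\fg_-=\oplus_a\fg_-^s(a)$ is a decomposition into $G_0$-submodules since $s\in Z(G_0)$, hence preserved by the principal $G_0$-connection $\om_0^\si$ underlying any $\nabla^\si$, which by~(7) is $TM=\oplus_aTM^s(a)$. Finally, for~(9), two almost $\un S$-invariant connections differ by~(2) and~(\ref{change-wc}) through a term $\ad(W)(\xi_j)$ with $W$ valued in $\fp_+^s(1)$; since $\ad$ of a $1$-eigenvector preserves $s$-eigenvalues and $\Ad(s)$ acts trivially on $\fg_0$ so that $\fg_0^s(a)=0$ for $a\neq1$, this term vanishes whenever $\xi\in\oplus_{a\neq1}TM^s(a)$, giving the common partial connection.
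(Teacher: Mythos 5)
Most of your proposal tracks the paper's own proof closely: the degree-by-degree cancellation of $(\U^{\hat\si,S(x)})_{\hat\si}$ using Proposition~\ref{auto-action}(3) and Lemma~\ref{lem3}, with smoothness extracted from the formula~(\ref{gl--zavorka}) and prolongation rigidity (via Proposition~\ref{multigrad}), is exactly the paper's construction for Claim~(1); your conjugation-plus-uniqueness argument for Claim~(5) and your eigenspace bookkeeping for Claims~(6)--(9), including the observation that $\Ad(s)$ acts trivially on $\fg_0$ so that the correction term in~(\ref{change-wc}) vanishes on $\oplus_{a\neq 1}TM^s(a)$, are the intended arguments. There are, however, two linked genuine gaps.

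The main gap is in Claim~(3). Your assertion that $\un S(x)=s_x^\si$ ``already holds for every almost $S$-invariant $\si$'' via Proposition~\ref{geodesic-s-symm} is circular: that proposition is a statement about the geodesic transformation $s_x^\si$, so to invoke it you must already know that $(s_x^\si)^*\nabla^\si$ is a Weyl connection, whereas what you actually know is $\un S(x)^*\nabla^\si=\nabla^{\si\exp(-(\U^{\si,S(x)})_\si)}$ with the $1$-form vanishing only at the single point $x$ --- and two connections agreeing at a point do not share geodesics through it, so this does not identify $\un S(x)$ with $s_x^\si$. Indeed the paper explicitly warns, right after Corollary~\ref{c3.3}, that $\U^{\si,s_u}(p(u))=0$ is ``far from being sufficient'', and Definition~\ref{almost-invariant-ws} allows $s_x^\si$ to differ between class members. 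Your fallback also fails: $\nu_u$ does satisfy $S(x)^*\nu_u=\nu_{us^{-1}}=\nu_u$, but it is \emph{not} almost $S$-invariant (it need not be invariant at nearby points $y$ under $S(y)$), so it cannot be reached ``within the $\fp_+^s(1)$-freedom of~(2)''. The paper instead seeds its correction procedure with $\hat\si=\nu_u$ and shows, by comparing $S(x)^*\si=\nu_u\exp(S(x)^*\U_i)\cdots\exp(S(x)^*\U_k)$ with $\si\exp(-(\U^{\si,S(x)})_\si)$ and using that $(\U^{\si,S(x)})_\si$ is $\fp_+^s(1)$-valued while the lowest BCH component $C(-\U_i,S(x)^*\U_i)_i$ has trivial $\fp_+^s(1)$-part, that each correction satisfies $\U_j=S(x)^*\U_j$; hence the output is simultaneously in the class and invariant at $x$, and only then does Corollary~\ref{c3.3} yield $\un S(x)=s_x^\si$. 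Relatedly, your opening reduction of ``almost $S$-invariant'' to the pointwise condition $\U^{\si,S(x)}(x)=0$ skips the verification demanded by Definition~\ref{almost-invariant-ws} that $\un S(x)^*\nabla^\si$ again lies in the class, i.e., that $(\U^{\si,S(x)})_\si$ takes values in $\fp_+^s(1)$ at \emph{all} points, not just at $x$; the paper closes this loop in its final paragraph by feeding $\un S(x)\circ\un S(y)(y)=\un S(\un S(x)(y))\circ\un S(x)(y)$ (Claim~(5)) into the transformation rule to get $\Ad(s)(\U^{\si,S(x)})_\si=(\U^{\si,S(x)})_\si$. You prove Claim~(5) but never make this deduction, and the invariant-at-$x$ refinement in Claim~(3) genuinely needs it.
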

We show that Claim (1) of Theorem \ref{t1.3} implies Claim (3) of Theorem \ref{t1.3} and simultaneously obtain all the claims of the proposition.
\begin{proof}
Let us pick an arbitrary Weyl structure $\hat \si$ and consider the $G_0$-equivariant function $(S)_{\hat \si}\colon \ba_0\to \{C(-\Ad(s)^{-1}(Y),Y), Y\in \fp_+\}$ def\/ined by \begin{gather*}S(p_0(u_0))^*\hat \si(u_0)=\hat \si(u_0)\exp(-(S)_{\hat \si}(u_0))\end{gather*}
for all $u_0\in \ba_0$. We show that $(S)_{\hat \si}$ is smooth.

We decompose
\begin{gather*}(S)_{\hat \si}=\sum_a\tau_i(a)+\dots+\sum_a\tau_k(a)\end{gather*}
according to the grading and the eigenvalues $a$ of $\Ad(s)$. It follows from Claim~(3) of Proposi\-tion~\ref{auto-action} that $\tau_i(1)\equiv 0$. Thus the formula~(\ref{gl--zavorka}) from the proof of Proposition~\ref{cor-ws} that holds under our assumptions at each point of $M$ implies that each $\tau_i(a)$ is smooth.

The formula (\ref{2.4.}) from the Lemma \ref{lem3} gives \begin{gather*}(S)_{\hat \si\exp (\U)_{\hat \si}}=C\big({-}\Ad(s)^{-1}(\U)_{\hat \si},C((S)_{\hat \si},(\U)_{\hat \si})\big).\end{gather*}
If we take $\U=r\tau_i(a)$ for arbitrary $r\in \mathbb{R}$, then
\begin{gather*}C\big({-}\Ad(s)^{-1}(r \tau_i(a)),C((S)_{\hat \si},r\tau_i(a))\big)_i(a) = C\left(-\frac{r}{a} \tau_i(a),C(\tau_i(a),r\tau_i(a))\right)_i(a)\\
\hphantom{C\big({-}\Ad(s)^{-1}(r \tau_i(a)),C((S)_{\hat \si},r\tau_i(a))\big)_i(a)}{} =
\frac{r(1-a)+a}{a}\tau_i(a)
\end{gather*}
holds for the component of the BCH-formula in $\fg_i(a)$, while the components of the BCH-for\-mu\-la in $\fg_i(b)$ for the other eigenvalues $b\neq a$ of $\Ad(s)$ remain $\tau_i(b)$. Consequently, if we take \begin{gather*}\U_i:=\sum_{a\neq 1} \frac{a}{a-1}\tau_i(a) \end{gather*} and consider the Weyl structure $\hat \si\exp (\U_i)$ instead of $\hat \si$, then we get
\begin{gather*}(S)_{\hat \si\exp (\U_i)}=\sum_a\tilde \tau_{i+1}(a)+\dots+\sum_a\tilde \tau_k(a).\end{gather*}
By induction, we obtain in f\/initely many steps a Weyl structure $\si$ such that $(S)_\si\equiv 0$ holds. Since $(S)_\si\equiv 0$ and all the changes we made are smooth, the function $(S)_{\hat \si}$ and the Weyl structure $\si=\hat \si\exp(\U_i)\cdots \exp(\U_k)$ are smooth.
Let $[\nabla^\si]$ be the class consisting of all Weyl connections for Weyl structures $\si$ constructed for all Weyl structures $\hat \si$. We complete the proof by showing that $[\nabla^\si]$ is an $\un S$-invariant class of Weyl connections and thus Claim~(1) holds.

It is clear from the construction of $\si$ that if we start with $\hat \si\exp (\U)_\si$ for $(\U)_\si$ with values in~$\fp^s_+(1)$, then we get $\si\exp(\U)_\si.$ Thus the class $[\nabla^\si]$ satisf\/ies Claim~(2) and Claims~(6),~(7),~(8) and~(9) are then consequences of Claim~(2) and the formulas for the change of Weyl structures and connections. In particular, the class $[\nabla^\si]$ is a maximal subclass of the class of Weyl connections that satisfy $T_xs_x^{\si}=T_xs_x^{\si'}$ for all Weyl connections $\nabla^\si,\nabla^{\si'}\in [\nabla^\si]$ and all $x\in M$, and that all connections in $[\nabla^\si]$ restrict to the same partial connection on all smooth subbundles of~$TM$ for all eigenvalues of $T_xs_x^\si$ dif\/ferent from~$1$.

If $\hat \si=\nu_u$ is the normal Weyl structure for $u\in \ba_x$ satisfying $S(x)(u)=us$, then
\begin{gather*}
\si\exp\big({-}\big(\U^{\si,S(x)}\big)_\si\big)= \nu_u\exp(S(x)^*\U_i)\cdots \exp(S(x)^*\U_k)\\
\hphantom{\si\exp\big({-}\big(\U^{\si,S(x)}\big)_\si\big)}{}
= \si\exp(-\U_k)\cdots \exp(C(-\U_i,S(x)^*\U_i))\cdots \exp(S(x)^*\U_k).
\end{gather*}
Since the component of $C(-\U_i,S(x)^*\U_i)$ contained in $\fg_i$ has a trivial component in $\fp^s_+(1)$ and $(\U^{\si,S(x)})_\si$ has its values in $\fp^s_+(1)$, the equality $\U_i=S(x)^*\U_i$ holds. Thus we get $0=C(-\U_i,S(x)^*\U_i)$. Therefore $\si\exp(-(\U^{\si,S(x)})_\si)=\si$ follows by induction, and thus $S(x)^*\si=\si$. Corollary~\ref{c3.3} and the last claim of Proposition~\ref{auto-action} implies that
\begin{gather*}\un S(x)=\un s_u=s_x^{\nu_u\exp(\U_i)\cdots \exp(\U_k)}=\un s_{\si(\pi(u))}\end{gather*}
holds for all $x\in M$, all $u\in \ba_x$ satisfying $S(x)(u)=us$ and arbitrary $\si$ such that $\nabla^\si\in [\nabla^\si]$. In particular, $\un S$ and $S$ are smooth, because $\si$ is smooth. Therefore Claims (3) and (4) hold.

Since
\begin{gather*}S(x)\circ S(y)\circ S(x)^{-1}(S(x)(\si(u_0)))=S(x)(\si(u_0))s\end{gather*}
holds for $u_0$ in the f\/iber over $x$, the composition $S(x)\circ S(y)\circ S(x)^{-1}$ is an $s$-symmetry at the point $\un S(x)(y)$. The equality $S(x)\circ S(y)\circ S(x)^{-1}=S(\un S(x)(y))$ then follows from the uniqueness of $s$-symmetries. Therefore Claim (5) holds.

In particular, $\un S(x)\circ \un S(y)(y)=\un S(\un S(x)(y)) \circ \un S(x)(y)$ holds. This implies that
\begin{gather*}
\si'(v_0)\exp\big(\big(\U^{\si',S(x)}\big)_{\si'}(v_0s)\big) =(S(x)\circ S(y))^*\si'(v_0)=(S(\un S(x)(y))\circ S(x))^*\si'(v_0)\\
\hphantom{\si'(v_0)\exp\big(\big(\U^{\si',S(x)}\big)_{\si'}(v_0s)\big)}{} =\si'(v_0)\exp((\U^{\si',S(x)})_{\si'}(v_0))
\end{gather*}
holds for $v_0$ in the f\/iber over $y$ for arbitrary $\si'$ such that $\nabla^{\si'}\in [\nabla^\si]$. Thus
\begin{gather*}\Ad(s)\big(\U^{\si,S(x)}\big)_\si(v_0)=\big(\U^{\si,S(x)}\big)_\si(v_0)\end{gather*} holds and thus $[\nabla^\si]$ is an $\un S$-invariant class of Weyl connections.
\end{proof}

\section[Geometric properties of parabolic geometries of distinguished types and classif\/ication]{Geometric properties of parabolic geometries\\ of distinguished types and classif\/ication} \label{sec6}

In this section, we study properties of (locally) $s$-symmetric parabolic geometries of particular types $(G,P)$ for triples $(\fg,\fp,\mu)$ that are prolongation rigid outside of the $1$-eigenspace of~$s$ for~$\mu$ in the $1$-eigenspace of $s$. The properties follow from the position and shape of $\fg_-^s(1)$ inside of~$\fg_-$. We classify all triples $(\fg,\fp,\mu)$ where $\fg_-^s(1)$ has such a position and shape for generic~$s$.

\subsection{Classif\/ication results and notation}\label{sec61}

Let us use the characterization from Section \ref{sec4.2} for the classif\/ication of the triples $(\fg,\fp,\mu)$ that are prolongation rigid outside of the $1$-eigenspace of $s\in Z(G_0)$ such that $\mu$ is in the $1$-eigenspace of~$s$. We separate the classif\/ication into the series of Tables~\ref{ahs1}--\ref{ahs13}.
The main reason for such a separation is that parabolic geometries from dif\/ferent tables have dif\/ferent geometric properties and we divide the tables according to these properties.

\begin{Theorem} \label{t3.5}
Let $(\fg,\fp,\mu)$ be a triple obtained from one of Tables~{\rm \ref{ahs1}--\ref{ahs13}} in the following way:
\begin{itemize}\itemsep=0pt
\item The Lie algebra $\fg$ is a simple Lie algebra of the $($complex$)$ rank $n$ that is at least $A_4$, $B_4$, $C_4$, $D_5$ or some explicit Lie algebra of lower rank from the column~$\fg$.
\item The parabolic subalgebra $\fp$ is the parabolic algebra from {\rm \cite[Section 3.2.9]{parabook}} for the set $\Sigma$ in the column $\Sigma$.
\item The component of the harmonic curvature $\mu$ is specified by an ordered pair of simple roots of $\fg$ from the column $\mu$ that provides the highest weight of $\mu$ by the affine action of corresponding elements of the Weyl group, see {\rm \cite[Theorem 3.3.5]{parabook}}.
\item The component $\mu$ is contained in the $1$-eigenspace of $s$ for the elements $s\in Z(G_0)$ that have the eigenvalues $j_{i_a}$ from the columns $j_{i_a}$ on the irreducible $\fg_0$-components that are determined by the $i_a$th element of the set $\Sigma$.
\end{itemize}
Then $(\fg,\fp,\mu)$ is prolongation rigid outside of the $1$-eigenspace of $s$ if the eigenvalues $j_{i_a}$ of $s$ satisfy the condition in the column PR.

Tables~{\rm \ref{ahs1}--\ref{ahs13}} contain the complete classification of triples $(\fg,\fp,\mu)$ that are prolongation rigid outside of the $1$-eigenspace of~$s$ for~$\mu$ in the $1$-eigenspace of~$s$ $($except the cases that are conjugated by an outer automorphism of~$\fg$ to one of the listed entries$)$.
\end{Theorem}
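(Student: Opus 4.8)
The plan is to reduce the entire statement to the criterion of Proposition~\ref{multigrad}: the triple $(\fg,\fp,\mu)$ is prolongation rigid outside of the $1$-eigenspace of $s$ if and only if the modules $\mathfrak{a}_i$ associated to $\mu$ satisfy $\mathfrak{a}_i\subseteq \fg_i^s(1)$ for all $i>0$. The whole classification is then organized around three pieces of data, each computable purely algebraically: the list of admissible triples $(\fg,\fp,\mu)$, the modules $\mathfrak{a}_i$, and the eigenspaces $\fg_i^s(1)$ expressed through the $j_{i_a}$.

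First I would enumerate the candidate triples. Since every element of $Z(G_0)$ acts on an irreducible $\fg_0$-module by a single scalar, the requirement that $\mu$ lie in the $1$-eigenspace of some nontrivial $s\in Z(G_0)$ is a finite combinatorial constraint on the highest weight of $\mu$ (produced from $\fp$ by Kostant's theorem, \cite[Theorem~3.3.5]{parabook}) relative to the grading elements indexed by the set $\Sigma$. This enumeration, together with the modules $\mathfrak{a}_i$ and the $1$-eigenspaces in $\fp_+$, is precisely the data tabulated in \cite[Appendix~C]{GZ3}, so I would import it directly rather than recompute it.

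Next, for each admissible triple I would determine $\mathfrak{a}_i$ via the Kruglikov--The recipe. By \cite[Theorem~3.3.3 and Recipe~7]{KT} there is a semisimple subalgebra $\bar\fg\subseteq\fg$ with parabolic $\bar\fp$ such that $\mathfrak{a}_i=\bar\fg_i$ for $i>0$, and I would read off, for each irreducible $\fg_0$-component of $\bar\fg_i$, its multidegree $(b_1,\dots,b_\ell)$ with respect to the simple roots in $\Sigma$, in the sense of the multigrading used in the proof of Proposition~\ref{multigrad}. On such a component $s\in Z(G_0)$ acts by $\prod_a j_{i_a}^{b_a}$, so the inclusion $\mathfrak{a}_i\subseteq\fg_i^s(1)$ becomes the system of equations $\prod_a j_{i_a}^{b_a}=1$, taken over all components of $\bar\fg_i$ and all $i>0$. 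Collecting these equations yields exactly the entry in the column PR for each row.

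Finally I would organize the verification by the type of $\fg$. For each classical series the modules $\mathfrak{a}_i$ and the associated multidegrees stabilize once the rank is large enough (at least $A_4$, $B_4$, $C_4$, $D_5$), so a single computation governs the whole stable range and produces the generic rows, while the finitely many low-rank coincidences, where the pattern degenerates or extra components appear, are treated individually and recorded as the explicit low-rank entries; the exceptional algebras are handled case by case. The main obstacle will be completeness rather than any individual computation: one must confirm that the admissible-triple enumeration of \cite{GZ3} is exhausted, that the Kruglikov--The data is transferred correctly to the multigrading in every row, and that triples related by an outer automorphism of $\fg$ are identified, so that each genuinely distinct triple appears exactly once in Tables~\ref{ahs1}--\ref{ahs13}.
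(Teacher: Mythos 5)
Your proposal follows essentially the same route as the paper: the paper likewise reduces the statement to the criterion of Proposition~\ref{multigrad} (prolongation rigidity outside of the $1$-eigenspace of $s$ if and only if $\mathfrak{a}_i\subseteq\fg_i^s(1)$ for all $i$, established via \cite[Theorem~3.3.3 and Recipe~7]{KT}), and then imports from \cite[Appendix~C]{GZ3} the classification of triples with $\mu$ in the $1$-eigenspace of $s$, the modules $\mathfrak{a}_i$, and the $1$-eigenspaces in $\fp_+$, turning the PR column into exactly the eigenvalue conditions $\prod_a j_{i_a}^{b_a}=1$ you describe. Your organization by stable rank ranges, low-rank degenerations, and identification under outer automorphisms also matches how the tables are assembled, so your plan is correct and coincides with the paper's proof.
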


The remaining notation we will use in the tables is the following:

We characterize the real form of $\fg$ by a number $q$ and a f\/ield $\{\mathbb{R, C, H}\}$.

The set $\Sigma$ characterizes the set of crossed nodes in the Dynkin or Satake diagram that provides the parabolic subalgebra~$\fp$. We use the ordering of nodes which is consistent with \cite[Appendix~B]{parabook} and we will not add the conjugated crossed nodes to~$\Sigma$ in the case of complex Lie algebras, $\mathfrak{su}(q,n+1-q)$ and $\mathfrak{so}(3,5)$. We distinguish the complex conjugated simple roots by~$'$.

If the column for the eigenvalue $j_{i_a}$ is blank, then the value of $j_{i_a}$ is generic. If the eigenvalue $j_{i_a}\notin \R$ and $\ln(j_{i_a}) = r_{i_a} +i\phi_{i_a}$, then either $r_{i_a}=0$ or $\phi_{i_a}=0$ and we specify only the non-zero one in the table.

If the column PR is missing or the condition is blank, then the triple $(\fg,\fp,\mu)$ is either prolongation rigid or the condition that~$\mu$ is contained in the $1$-eigenspace of $s$ is suf\/f\/icient for~$\mathfrak{a}_i$ corresponding to $\mu$ to be a subspace of $\fg_i^s(1)$ for all~$i$.

The classif\/ication tables are presented in the following subsections and the triples $(\fg,\fp,\mu)$ are obtained from the tables according to Theorem~\ref{t3.5}.

\subsection[Parabolic geometries with $\fg_-^s(1)=0$]{Parabolic geometries with $\boldsymbol{\fg_-^s(1)=0}$}\label{sec5.1}

Table \ref{ahs1} contains all triples $(\fg,\fp,\mu)$ with the property that if $s\in Z(G_0)$ is such that $(\fg,\fp,\mu)$ is prolongation rigid outside of the $1$-eigenspace of $s$, then $\fg_-^s(1)=0$. In particular, all AHS-structures that have a component of the harmonic curvature in the $1$-eigenspaces of some \smash{$s\in Z(G_0)$} are prolongation rigid outside of the $1$-eigenspace of $s$ and thus are contained in this table.

\begin{table}[th!]\caption{Theorem~\ref{6.2}}\label{ahs1}\centering \vspace{1mm}
\begin{tabular}{|c|c|c|c|}
\hline
$\fg$ & $\Sigma$ & $j_{i_1}$ & $\mu$\\
\hline
\hline
$\mathfrak{sl}(3,\mathbb{C})$ & $\{1\}$&$\phi_1$ & $(\alpha_1,\alpha_{1'})$ \\
\hline
$\mathfrak{sl}(3,\mathbb{C})$ & $\{1\}$&$\sqrt[3]{1}$& $(\alpha_{1'},\alpha_{2'})$ \\
\hline
$\mathfrak{sl}(4,\{\mathbb{R,C}\})$ & $\{1\}$&$\sqrt{1}$& $(\alpha_1,\alpha_2)$ \\
\hline
$\mathfrak{sl}(4,\mathbb{C})$ & $\{1\}$&$\phi_1$ & $(\alpha_1,\alpha_{1'})$ \\
\hline
$\mathfrak{sl}(4,\mathbb{C})$ & $\{1\}$&$\sqrt[3]{1}$& $(\alpha_{1'},\alpha_{2'})$ \\
\hline
$\mathfrak{sl}(n+1,\{\mathbb{R,C}\})$ & $\{1\}$&$\sqrt{1}$& $(\alpha_1,\alpha_2)$ \\
\hline
$\mathfrak{sl}(n+1,\mathbb{C})$ & $\{1\}$&$\phi_1$ & $(\alpha_1,\alpha_{1'})$ \\
\hline
$\mathfrak{sl}(n+1,\{\mathbb{R, C, H}\})$ &$\{2\}$ &$\sqrt{1}$& $(\alpha_2,\alpha_1)$ \\
\hline
$\mathfrak{sl}(n+1,\mathbb{C})$ & $\{p\}$&$\sqrt[3]{1}$& $(\alpha_{p'},\alpha_{p+1'})$ \\
\hline
$\mathfrak{so}(1,5),\mathfrak{so}(2,4),\mathfrak{so}(3,3),\mathfrak{so}(6,\mathbb{C})$, & & &\\
$\mathfrak{so}(1,6)$, $\mathfrak{so}(2,5)$, $\mathfrak{so}(3,4)$, $\mathfrak{so}(7,\mathbb{C})$ , &$\{1\}$ &$\sqrt{1}$& $(\alpha_1,\alpha_2)$ \\
$\mathfrak{so}(1,7)$, $\mathfrak{so}(2,6)$, $\mathfrak{so}(3,5)$, $\mathfrak{so}(4,4)$, $\mathfrak{so}(8,\mathbb{C})$& & & \\
\hline
$\mathfrak{so}(6,\mathbb{C})$, $\mathfrak{so}(7,\mathbb{C})$, $\mathfrak{so}(8,\mathbb{C})$ & $\{1\}$&$\sqrt[3]{1}$& $(\alpha_{1'},\alpha_{2'})$ \\
\hline
$\mathfrak{so}(7,\mathbb{C})$ & $\{3\}$&$\sqrt[3]{1}$ &$(\alpha_3,\alpha_2)$\\
\hline
$\mathfrak{so}(q,n-q)$, $\mathfrak{so}(n,\mathbb{C})$ & $\{1\}$&$\sqrt{1}$ & $(\alpha_1,\alpha_2)$ \\
\hline
$\mathfrak{so}(n,\mathbb{C})$ & $\{1\}$&$\sqrt[3]{1}$& $(\alpha_{1'},\alpha_{2'})$ \\
\hline
$\mathfrak{so}(2n,\mathbb{C})$ & $\{n\}$& $\sqrt[3]{1}$& $(\alpha_{n'},\alpha_{n-2'})$ \\
\hline
$\mathfrak{so}(2n+1,\mathbb{C})$ & $\{n\}$& $\sqrt[5]{1}$& $(\alpha_{n'},\alpha_{n-1'})$ \\
\hline
$\mathfrak{sp}(4,\mathbb{C})$ & $\{1\}$&$\sqrt[3]{1}$& $(\alpha_{1},\alpha_{2})$\\
\hline
$\mathfrak{sp}(4,\mathbb{C})$ & $\{1\}$&$\sqrt[3]{1}$& $(\alpha_{1'},\alpha_{2'})$\\
\hline
$\mathfrak{sp}(4,\mathbb{C})$ & $\{2\}$&$\sqrt[3]{1}$& $(\alpha_{2'},\alpha_{1'})$\\
\hline
$\mathfrak{sp}(6,\mathbb{C})$ & $\{2\}$&$\sqrt[5]{1}$& $(\alpha_{2'},\alpha_{3'})$\\
\hline
$\mathfrak{sp}(6,\mathbb{C})$ & $\{3\}$&$\sqrt[3]{1}$& $(\alpha_{3'},\alpha_{2'})$\\
\hline
$\mathfrak{sp}(2n,\mathbb{C})$ & $\{n-1\}$&$\sqrt[5]{1}$& $(\alpha_{n-1'},\alpha_{n'})$\\
\hline
$\mathfrak{sp}(2n,\mathbb{C})$ & $\{n\}$&$\sqrt[3]{1}$& $(\alpha_{n'},\alpha_{n-1'})$\\
\hline
$\mathfrak{e}_{6}(\mathbb{C})$ & $\{1\}$&$\sqrt[3]{1}$&$(\alpha_{1'},\alpha_{2'})$\\
\hline
$\mathfrak{e}_{7}(\mathbb{C})$ & $\{1\}$&$\sqrt[3]{1}$&$(\alpha_{1'},\alpha_{2'})$\\
\hline
\end{tabular}
\end{table}

\begin{Example}
Before we formulate the general result, let us demonstrate how the results for (locally) symmetric conformal geometries that we presented in~\cite{GZ4} can be obtained from Table~\ref{ahs1} and Theorem~\ref{6.2}:

There are rows with $\fg=\mathfrak{so}(q,n-q)$ and $\Sigma=\{1\}$ in Table~\ref{ahs1} and the triples $(\mathfrak{so}(q,n-q)$, $\fp_{\{1\}},\mu_{(\alpha_1,\alpha_2)})$ are prolongation rigid outside of the $1$-eigenspace of $s$ for $n>5$ and $q>0$. We read of the corresponding line that the eigenvalue $j_{i_1}=\sqrt{1}$. Thus $s=m$ and the $m$-symmetries in question are the symmetries of conformal geometries presented in~\cite{GZ4}. We get immediately from Theorem~\ref{6.2} that Theorem~\ref{t1.1} holds for conformal geometries.
\end{Example}

In the following theorem, we summarize geometric properties of geometries from Table \ref{ahs1} and prove the last claim of Theorem \ref{t1.3}.

\begin{Theorem} \label{6.2}
Assume $(\fg,\fp,\mu)$ is prolongation rigid outside of the $1$-eigenspace of $s$ for $s\in Z(G_0)$ such that $\fg_-^s(1)=0$ holds. If the harmonic curvature $\kappa_H$ of the $($locally$)$ $s$-symmetric parabolic geometry $(\ba\to M,\om)$ of type $(G,P)$ has a non-zero component in~$\mu$ at some~$x$, then:
\begin{enumerate}\itemsep=0pt
\item[$1.$] The parabolic geometry is $($locally$)$ homogeneous, $\kappa_H(x)\neq 0$ at all $x\in M$ and there is a~unique smooth system of $($local$)$ $s$-symmetries~$S$ on~$M$.
\item[$2.$] There is a unique distinguished Weyl structure $\si$ which is uniquely characterized by one of the following equivalent properties:
\begin{enumerate}\itemsep=0pt
\item[$(a)$] The equalities $\nabla^\si T^\si=0$, $s.(T^\si)_\si=(T^\si)_\si$, $\nabla^\si R^\si=0$ and $s.(R^\si)_\si=(R^\si)_\si$ hold for the torsion and the curvature of the Weyl connection $\nabla^\si$.
\item[$(b)$] The Weyl connection $\nabla^\si$ is $\un S$-invariant.
\item[$(c)$] All $($local$)$ automorphisms of the parabolic geometry cover affine transformations of~$\nabla^\si$.
\item[$(d)$] All $($local$)$ diffeomorphisms $s_x^\si$ are affine transformations of~$\nabla^\si$.
\item[$(e)$] All $($local$)$ $P$-bundle morphisms $s_{\si(u_0)}$ are $($local$)$ $s$-symmetries.
\end{enumerate}
\item[$3.$] The pseudo-group generated by all local $s$-symmetries is transitive on $M$ and its connected component of identity is generated by the flows of the Lie algebra~$\fl$, which is the vector subspace of $\fg_-\oplus \fg_0$, generated by~$\fg_-$ by the bracket $(T^\si+R^\si)_\si$ on $\wedge^2 \fg_-^*\otimes \fl$ and the natural bracket on the rest of~$\fl$.
\item[$4.$] The equalities \begin{gather*}\un S(x)=\un s_{\si(u_0)}=s_x^\si\end{gather*}
hold for the Weyl structure $\si$ from Claim~$(2)$. In particular,
\begin{itemize}\itemsep=0pt
\item the maps $\un S(x)$ can be extended to a larger neighbourhood of $x$ as long as the corresponding geodesic transformations of $\nabla^\si$ are defined,
\item $\un S(x)\circ \un S(y)\circ \un S(x)^{-1}(z)=\un S(\un S(x)(y))(z)$ holds for $(x,y,z)$ in some neighbourhood of the diagonal in $M\times M\times M$, and
\item for each eigenvalue $a$, the distribution $TM^s(a)$ is preserved by all $($local$)$ automorphisms of the parabolic geometry.
\end{itemize}
\end{enumerate}
\end{Theorem}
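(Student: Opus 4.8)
The plan is to bring everything under Proposition~\ref{almost-ws}, applied on all of $M$. That proposition assumes a non-zero $\mu$-component of $\kappa_H$ at every point, whereas we are given it only at one point $x_0$, so the first thing I would do is prove $U=M$, where $U$ is the open set on which the $\mu$-component is non-zero. Since the geometry is (locally) $s$-symmetric, at each $x$ there is a local $s$-symmetry $s_u$; choosing the normal Weyl structure $\nu_u$, for which $\U^{\nu_u,s_u}(x)=0$, Corollary~\ref{invariantni-ws} together with $\fg_-^s(1)=0$ gives $\na^{\nu_u}_\xi\kappa_H(x)=0$ for all $\xi$. As $\kappa_H$ takes values in a module on which $\exp(\fp_+)$ acts trivially, the change formula~(\ref{change-wc}) shows that for a fixed global Weyl structure $\si$ the derivative $\na^\si_\xi\kappa_H(x)$ differs from $\na^{\nu_u}_\xi\kappa_H(x)=0$ only by an algebraic $\fg_0$-action on $\kappa_H(x)$, so $\na^\si_\xi\kappa_H(x)\in\fg_0.\kappa_H(x)$ at every $x$. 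Hence $\kappa_H$ moves only tangentially to its $G_0$-orbit, its $G_0$-orbit type is constant on the connected $M$, and since a non-zero $\mu$-component is $G_0$-invariant and occurs at $x_0$, it occurs everywhere. This gives $U=M$ and $\kappa_H(x)\neq0$ for all $x$.

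With $U=M$, Proposition~\ref{almost-ws} produces the unique smooth system $S$ of local $s$-symmetries. Next I would collapse the class $[\na]$ to a single connection. Since the Killing form is $\Ad(s)$-invariant and pairs $\fg_i$ nondegenerately with $\fg_{-i}$, the eigenvalue $1$ occurs on $\fg_i$ exactly when it occurs on $\fg_{-i}$, so $\fg_-^s(1)=0$ forces $\fp^s_+(1)=0$. By Proposition~\ref{almost-ws}(2) two almost $S$-invariant Weyl structures differ by $\exp(\U)_\si$ with $(\U)_\si$ valued in $\fp^s_+(1)=0$; hence the almost $S$-invariant Weyl structure $\si$ is unique, and by Proposition~\ref{almost-ws}(3) it is invariant at every point, so $\na^\si$ is $\un S$-invariant and $[\na]=\{\na^\si\}$. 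This is the single-connection statement of Theorem~\ref{t1.3}, and the equalities $\un S(x)=\un s_{\si(u_0)}=s_x^\si$ of Claim~4 follow from Proposition~\ref{almost-ws}(3),(4).

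The heart of the proof is the parallelism in Claim~2(a) and the resulting homogeneity. Because $s_x^\si$ has coordinates $\Ad(s)$ in every frame $\si(u_0)$ and $s\in Z(G_0)$ commutes with $G_0$, the symmetry tensor $S$ corresponds to the constant $G_0$-equivariant function $\Ad(s)|_{\fg_-}$ and is $\na^\si$-parallel: $\na^\si S=0$. Each $\un S(x)$ is then an affine automorphism of $\na^\si$ fixing $x$ with linearisation $\Ad(s)|_{\fg_-}$, with $I-S_x$ invertible by $\fg_-^s(1)=0$, and Proposition~\ref{almost-ws}(5) makes $\{\un S(x)\}$ a regular $s$-structure. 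I would then invoke the theory of regular $s$-manifolds with invertible $I-S$ from~\cite{Kowalski}: $\na^\si$ is the canonical connection, whence $\na^\si T^\si=0$ and $\na^\si R^\si=0$, and the transvections act transitively, so the geometry is locally affinely homogeneous; the invariances $s.(T^\si)_\si=(T^\si)_\si$ and $s.(R^\si)_\si=(R^\si)_\si$ are immediate since $\un S(x)$ preserves $T^\si,R^\si$ and acts at $x$ by $\Ad(s)$. I expect this passage---extracting parallel torsion and curvature and hence homogeneity from a single point-fixing symmetry at each point---to be the main obstacle, together with identifying the transvection algebra as the subalgebra $\fl\subseteq\fg_-\oplus\fg_0$ generated by $\fg_-$ under the bracket read from $(T^\si+R^\si)_\si$, which is Claim~3.

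It remains to settle the equivalences in Claim~2 and the supplementary assertions in Claim~4. The distinguished $\si$ above satisfies all of (a)--(e): (a) is the parallelism just obtained; (c) and (d) hold because every local automorphism, in particular every $s_x^\si=\un S(x)$, preserves the canonically characterised $\si$ and is therefore affine for $\na^\si$; and (e) is Proposition~\ref{geodesic-s-symm} applied to the $\un S$-invariant $\si$ via Corollary~\ref{c3.3} and Proposition~\ref{prop2.4}. Uniqueness of $\si$ follows since each of (a)--(e) forces the Weyl structure to be almost $S$-invariant, which is unique by the second paragraph. Finally, in Claim~4 the maps $\un S(x)=s_x^\si$ extend as far as the geodesic transformations of $\na^\si$ defined by~(\ref{geodf}) are defined, the identity $\un S(x)\circ\un S(y)\circ\un S(x)^{-1}=\un S(\un S(x)(y))$ is the underlying-diffeomorphism form of Proposition~\ref{almost-ws}(5), and each $TM^s(a)$ is preserved by every local automorphism because such a map preserves the canonically characterised $\si$ and hence, by Proposition~\ref{almost-ws}(7), the distribution $TM^s(a)=Tp_0\circ(\om^\si_-+\om^\si_0)^{-1}(\fg_-^s(a))$.
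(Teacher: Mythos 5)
Your proposal is correct, but it reorganizes the argument in two genuinely different ways, so it is worth comparing with the paper's proof. First, you establish $U=M$ at the outset: you combine Corollary~\ref{invariantni-ws} (applied with the normal Weyl structure $\nu_u$, for which indeed $\U^{\nu_u,s_u}\equiv 0$ since $s_u^*\nu_u=\nu_{us^{-1}}=\nu_u$) with the change formula~(\ref{change-wc}) to get $\na^\si_\xi\ka_H(x)\in\fg_0.\ka_H(x)$ for a fixed $\si$, and then a Stefan--Sussmann orbit argument to conclude that the $G_0$-orbit of $\ka_H$ is constant on the connected $M$, so the non-vanishing of the $\mu$-component propagates. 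The paper proceeds in the opposite order: it proves Claims~(2) and~(3) under the assumption $U=M$ and then deduces $U=M$ from the transitivity of the pseudo-group of local $s$-symmetries in Claim~(3); your route buys a cleaner, non-bootstrapped logical order at the price of the orbit-theorem input. Second, for the core step you route through Kowalski's regular $s$-structure theory: Proposition~\ref{almost-ws}(5) plus invertibility of $I-S$ makes $\{\un S(x)\}$ a regular $s$-structure, $\na^\si S=0$ plus $\un S$-invariance identifies $\na^\si$ with the canonical connection (the uniqueness of an $s$-invariant connection parallelizing $S$ when $I-S$ is invertible is a short difference-tensor computation, $D_{(S-I)\xi}=0$, that you should write out since \cite{Kowalski} works with globally defined symmetries and you need the local version), and parallel torsion/curvature and transvection transitivity follow. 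The paper instead gets (2b)$\Rightarrow$(2a) by the elementary eigenvalue computation $(\na^\si_\xi T^\si)_\si=s.(\na^\si_\xi T^\si)_\si=a(\na^\si_\xi T^\si)_\si$ with $a\neq 1$, and obtains homogeneity and the realization of $s$ as a local affine transformation from \cite[Section~1.5]{disertace} and \cite[Lemma~2.2]{GZ2}, citing \cite[Theorem~1.3]{GZ3} for (2c); your conjugation argument for (2c) (every automorphism conjugates the unique system $S$ into itself, hence fixes the unique almost $S$-invariant $\si$, hence is affine) is in fact more self-contained than the paper's citation. One point to tighten: your claim that each of (2a)--(2e) forces almost $S$-invariance is asserted rather than argued for an \emph{arbitrary} Weyl structure satisfying (2a); there you still need the integration step from the $s$-invariant parallel pair $(T^\si,R^\si)$ to a local affine transformation with linearization $\Ad(s)$ (Nomizu's infinitesimal model, i.e., \cite[Lemma~2.2]{GZ2}) before Proposition~\ref{geodesic-s-symm} and uniqueness of $s$-symmetries can be invoked --- your Kowalski argument as stated only covers the distinguished $\si$, not a competitor $\si'$ satisfying (2a). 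This is, however, a citation-level step of the same caliber the paper itself relies on, so the proposal is sound.
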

\begin{proof}
Let $U\subset M$ be the set of points $x$ such that $\kappa_H(x)$ has a non-zero component in $\mu$. Then there is a unique system of (local) $s$-symmetries on $U$ due to the prolongation rigidity of the triple $(\fg,\fp,\mu)$ outside of the $1$-eigenspace of~$s$. It suf\/f\/ices to prove the theorem under the assumption $U=M$, because if we prove Claim~(3) on~$U$, then the equality $U=M$ follows from the (local) homogeneity, i.e., Claim~(1) follows from Claim~(3). Then Claim~(4) follows from Claim~(2) due to Claims~(5) and~(7) of Proposition~\ref{almost-ws}.

Therefore, it suf\/f\/ices to prove Claims (2) and (3) under the assumption $U=M$ to complete the proof. If $\fg^s_-(1)=0$, then $\fp^s_+(1)=0$ and Proposition~\ref{almost-ws} implies that there is a~unique $S$-invariant Weyl structu\-re~$\si$. It follows from Propositions~\ref{auto-action} and~\ref{geodesic-s-symm} that the Weyl structure~$\si$ satisf\/ies (2b) if and only if it satisf\/ies (2e). Further, Proposition~\ref{geodesic-s-symm} and Corollary~\ref{c3.3} imply that the Weyl structure~$\si$ satisf\/ies (2e) if and only if it satisf\/ies (2d).

We show now that (2b) implies (2a). The torsion and the curvature of $\un S$-invariant Weyl connection $\nabla^\si$ are $\un S$-invariant. In particular, \begin{gather*}s.(T^\si)_\si(u_0)=(\un S(p_0(u_0))^*T^\si(p_0(u_0)))_\si(u_0)=(T^\si)_\si(u_0)\end{gather*} and \begin{gather*}s.(R^\si)_\si(u_0)=(\un S(p_0(u_0))^*R^\si(p_0(u_0)))_\si(u_0)=(R^\si)_\si(u_0)\end{gather*} hold for all $u_0\in \ba_0$ for the natural action $.$ of $G_0$ on the values of $(T^\si)_\si$ and $(R^\si)_\si$. Since the same arguments can be applied on $\nabla^\si T^\si$ and $\nabla^\si R^\si$, it follows that $(\nabla^\si_\xi T^\si)_\si=s.(\nabla^\si_\xi T^\si)_\si=a(\nabla^\si_\xi T^\si)_\si$ and $(\nabla^\si_\xi R^\si)_\si=s.(\nabla^\si_\xi R^\si)_\si=a(\nabla^\si_\xi R^\si)_\si$ hold for any vector f\/ield $\xi$ on $M$ such that $(\xi)_\si(u_0) \in \fg^s_-(a^{-1})$ for all $u_0\in \ba_0$. Thus (2b) implies (2a), because $\fg^s_-(1)=0$.

Claim (2a) implies that $\nabla^\si$ is a locally af\/f\/inely homogeneous connection. Therefore, according to \cite[Section 1.5]{disertace}, the af\/f\/ine geometry $(M,\nabla^\si)$ can be encoded as a locally homogeneous Cartan geometry of type $(\fg_-\rtimes {\rm Gl}(\fg_-),{\rm Gl}(\fg_-))$ on the f\/irst-order frame bundle $\mathcal{P}^1M$. Moreover, the assumptions of \cite[Lemma~2.2]{GZ2} are satisf\/ied, because $(T^\si+R^\si)_\si(\iota_\si(u_0))$ is the bracket of the inf\/initesimal af\/f\/ine transformation at $\iota_\si(u_0)\in \iota_\si(\ba_0)\subset \mathcal{P}^1M$. Thus there is a (local) af\/f\/ine transformation $A$ of $\nabla^\si$ such that $(A)_\si(u_0)=s$. Therefore Claim (2d) follows from Claim (2a) and Proposition \ref{geodesic-s-symm} due to the uniqueness of $s$-symmetries. In particular, if we consider a (local) one-parameter subgroup $\exp(t\xi)$ for an inf\/initesimal af\/f\/ine transformation $\xi$, then $\exp(t\xi)\un S(x)\exp(-t\xi)$ is the (local) $s$-symmetry at $\exp(t\xi)(x)$ and the map $\frac{d}{dt}|_{t=0}\exp(t\xi)\un S(x)\exp(-t\xi)\un S(x)^{-1}$ maps $\xi$ into $\fl$. If $(\mathcal{P}^1\xi)_\si(u_0)=X$, then the element $X-\Ad(s)(X)$ is contained in $\fl$. Thus $\fg_-\subset \fl$ as a vector subspace. Thus the f\/lows of the Lie algebra $\fl$ generate
a sub-pseudo-group, which is the connected component of identity of the pseudo-group generated by local $s$-symmetries. Since $\Ad(s)$ preserves $\fl$, Claim (3) follows.

We can use the results from \cite[Theorem 1.3]{GZ3} due to the local homogeneity and (2c) follows from (2a). Clearly (2c) implies (2d), which completes the proof.
\end{proof}

\subsection[Parabolic geometries with distinguished parabolic subalgebras $\fg_-^s(1)+\fp$]{Parabolic geometries with distinguished parabolic subalgebras $\boldsymbol{\fg_-^s(1)+\fp}$}\label{sec5.2}

There are triples $(\fg,\fp,\mu)$ that are prolongation rigid outside of the $1$-eigenspace of $s$ which admit $1$-eigen\-space in~$\fg_-$ for some $s$ such that $\fv:=\fg_-^s(1)+\fp$ is a parabolic subalgebra of $\fg$ such that the harmonic curvature in $\mu$ vanishes on insertions of elements of~$\fv/\fp$ at all points of~$M$. These are listed in Tables~\ref{ahs2},~\ref{ahs3} and~\ref{ahs4} due to \cite[Propositions~6.2 and~A.2]{GZ3}.

\begin{table}[th!]\centering \caption{Theorem \ref{6.3}, part with $|\Si|=2$.}\label{ahs2} \vspace{1mm}
\begin{tabular}{|c|c|c|c|c|c|}
\hline
$\fg$ & $\Sigma$ & $j_{i_1}$& $j_{i_2}$& $\mu$&PR \\
\hline
\hline
$\mathfrak{sl}(3,\mathbb{C})$ &$\{1,2\}$&& $2r_1$& $(\alpha_{1},\alpha_{1'})$& \\
\hline
$\mathfrak{sl}(3,\mathbb{C})$& $\{1,2\}$ &$-\frac{2}3\phi_2$& & $ (\alpha_{1'},\alpha_{2'})$& $r_2=0$, $\phi_2=2\pi$ \\
\hline
$\mathfrak{sl}(3,\mathbb{C})$& $\{1,2\}$ && $-\frac23\phi_1$& $ (\alpha_{2'},\alpha_{1'})$& $r_1=0$, $\phi_1=2\pi$ \\
\hline
$\mathfrak{sl}(4,\{\R,\C\})$ & $\{1,2\}$ & $j_2^{-2}$ & & $(\alpha_2,\alpha_1)$& $j_2=-1$ \\
\hline
$\mathfrak{sl}(4,\R)$ & $\{1,2\}$ &$j_2^2$ && $(\alpha_2,\alpha_3)$ & \\
\hline
$\mathfrak{sl}(4,\mathbb{C})$ &$\{1,2\}$&& $2r_1$& $(\alpha_{1},\alpha_{1'})$& \\
\hline
$\mathfrak{sl}(4,\mathbb{C})$& $\{1,2\}$ &$-\frac23\phi_2$& & $ (\alpha_{1'},\alpha_{2'})$& $r_2=0$, $\phi_2=2\pi$ \\
\hline
$\mathfrak{sl}(4,\mathbb{C})$& $\{1,2\}$ && $-\frac23\phi_1$& $ (\alpha_{2'},\alpha_{1'})$& $r_1=0$, $\phi_1=2\pi$ \\
\hline
$\mathfrak{sl}(4,\R)$ & $\{1,3\}$ &&$j_1^2$ &$(\alpha_1,\alpha_2)$& \\
\hline
$\mathfrak{sl}(4,\mathbb{C})$ &$\{1,3\}$&& $2r_1$& $(\alpha_{1},\alpha_{1'})$& \\
\hline
$\mathfrak{sl}(n+1,\{\R,\C\})$ & $\{1,2\}$ & $j_2^{-2}$ & & $(\alpha_2,\alpha_1)$& $j_2=-1$ \\
\hline
$\mathfrak{sl}(n+1,\mathbb{C})$ &$\{1,2\}$&& $2r_1$& $(\alpha_{1},\alpha_{1'})$& \\
\hline
$\mathfrak{sl}(n+1,\R)$ & $\{1,3\}$ &&$j_1^2$ &$(\alpha_1,\alpha_2)$& \\
\hline
$\mathfrak{sl}(n+1,\mathbb{C})$ &$\{1,p\}$&& $2r_1$& $(\alpha_{1},\alpha_{1'})$&$r_1=0$\\
& $2<p<n$& & &&\\
\hline
$\mathfrak{sl}(n+1,\{\R,\C\})$ & $\{1,p\}$ &&$j_1^2$ &$(\alpha_1,\alpha_2)$& $j_1=-1$ \\
& $3<p<n$& & &&\\
\hline
$\mathfrak{sl}(n+1,\R)$ & $\{1,n\}$ &&$j_1^2$ &$(\alpha_1,\alpha_2)$& \\
\hline
$\mathfrak{sl}(n+1,\mathbb{C})$ &$\{1,n\}$&& $2r_1$& $(\alpha_{1},\alpha_{1'})$& \\
\hline
$\mathfrak{sl}(n+1,\R)$ & $\{2,3\}$ &&$j_2^2$ & $(\alpha_2,\alpha_1)$ &\\
\hline
$\mathfrak{sl}(n+1,\{\R,\C,\mathbb{H}\})$ & $\{2,p\}$ &&$j_2^2$ & $(\alpha_2,\alpha_1)$ & $j_2=-1$\\
& $3<p<n$& & &&\\
\hline
$\mathfrak{sl}(n+1,\{\R,\mathbb{H}\})$ & $\{2,n\}$ &&$j_2^2$ & $(\alpha_2,\alpha_1)$ & \\
\hline
$\mathfrak{sl}(n+1,\mathbb{C})$& $\{p,p+1\}$ && $-\frac23\phi_p$& $ (\alpha_{p+1'},\alpha_{p'})$& $r_p=0$, $\phi_p=2\pi$ \\

\hline
$\mathfrak{so}(2,5),\mathfrak{so}(3,4),\mathfrak{so}(7,\C),$&&&& & \\
$\mathfrak{so}(2,6),\mathfrak{so}(3,5),$&$\{ 1,2\}$&$\sqrt{1} $&& $(\alpha_1,\alpha_2)$ &$j_2=1$ \\
$\mathfrak{so}(4,4),\mathfrak{so}(8,\C)$& & & & &\\
\hline
$\mathfrak{so}(4,4)$&$\{ 1,4\}$& & $j_1^2$& $(\alpha_1,\alpha_2)$ & \\
\hline
$\mathfrak{so}(q,n-q),\mathfrak{so}(n,\C)$&$\{ 1,2\}$&$\sqrt{1} $&& $(\alpha_1,\alpha_2)$ &$j_2=1$ \\
\hline
$\mathfrak{so}(n,n), \mathfrak{so}(2n,\C)$&$\{ 1,n\}$& & $j_1^2$& $(\alpha_1,\alpha_2)$ &$j_1=-1$ \\
\hline
\end{tabular}
\end{table}

\begin{table}[th!]\centering \caption{Theorem \ref{6.3}, part with $|\Si|=3$.}\label{ahs3}\vspace{1mm}

\begin{tabular}{|c|c|c|c|c|c|c|}
\hline
$\fg$ & $\Sigma$ & $j_{i_1}$& $j_{i_2}$& $j_{i_3}$&$\mu$&PR \\
\hline
\hline
$\mathfrak{sl}(4,\mathbb{R})$ &$\{1,2,3\}$& & & $j_1j_2^2 $ &$(\alpha_2,\alpha_1)$& $j_1=1$\\
\hline
$\mathfrak{sl}(n+1,\mathbb{R})$ &$\{1,2,3\}$& & & $j_1j_2^2 $ &$(\alpha_2,\alpha_1)$& $j_1=1$ \\
\hline
$\mathfrak{sl}(n+1,\{\mathbb{R},\C\})$ &$\{1,2,p\}$& & & $j_1j_2^2 $ &$(\alpha_2,\alpha_1)$& $j_1=1$, $j_2=-1$ \\
& $3<p<n$& & &&&\\
\hline
$\mathfrak{sl}(n+1,\mathbb{R})$ &$\{1,2,n\}$& & & $j_1j_2^2 $ &$(\alpha_2,\alpha_1)$& $j_1=1$ \\
\hline
$\mathfrak{so}(4,4)$& $\{1,2,4\}$&&& $j_1^2 $&$(\alpha_1,\alpha_2)$& $j_2=1$\\
\hline
$\mathfrak{so}(n,n)$, $\mathfrak{so}(4n,\mathbb{C})$& $\{1,2,n\}$&&& $j_1^2 $&$(\alpha_1,\alpha_2)$& $j_1=-1$, $j_2=1$\\
\hline
\end{tabular}
\end{table}

\begin{table}[th!]\centering\caption{Theorem \ref{6.3}, part with $|\Si|=4$.}\label{ahs4}\vspace{1mm}
\begin{tabular}{|c|c|c|c|c|c|c|c|c|}
\hline
$\fg$ & $\Sigma$ & eigenvalues& $\mu$& PR \\
\hline
\hline
$\mathfrak{sl}(n+1,\mathbb{R})$ & $\{1,2,3,p\}$ &$j_p=j_1j_2^2j_3^{-1}$&$(\alpha_2,\alpha_1)$& $j_1=1$, $j_3=j_2^2$\\
& $3<p<n$& & &\\
\hline
$\mathfrak{sl}(n+1,\{\mathbb{R},\C\})$ & $\{1,2,p,q\}$&$j_q=j_1j_2^2j_p^{-1}$&$(\alpha_2,\alpha_1)$& $j_1=1$, $j_2=-1$, $j_p=1$\\
& $3<p,q<n$& & &\\
\hline
$\mathfrak{sl}(n+1,\mathbb{R})$ & $\{1,2,p,n\}$&$j_n=j_1j_2^2j_p^{-1}$&$(\alpha_2,\alpha_1)$& $j_1=1$, $j_p=1$\\
& $3<p<n$& & &\\
\hline
\end{tabular}
\end{table}

\begin{Example}\label{expg}To demonstrate our results, let us look in Table \ref{ahs2} on the row $\fg=\mathfrak{sl}(n+1,\R)$ and $\Sigma=\{1,2\}$ which corresponds to generalized path-geometries (for systems of second-order ODEs), see \cite[Sections 4.4.3--4.4.5]{parabook}. These parabolic geometries generally have two harmonic curvatures, one torsion $\kappa_{(\alpha_1,\alpha_2)}$ and one curvature $\kappa_{(\alpha_2,\alpha_1)}$. However, they fall in Table \ref{ahs2} only when the torsion $\kappa_{(\alpha_1,\alpha_2)}$ vanishes and the harmonic curvature consists only of the curva\-tu\-re~$\kappa_{(\alpha_2,\alpha_1)}$ corresponding to $\mu_{(\alpha_2,\alpha_1)}$. There are many $s\in Z(G_0)$ that act trivially on $\mu_{(\alpha_2,\alpha_1)}$, but the triple $(\mathfrak{sl}(n+1,\R),\fp_{\{1,2\}},\mu_{(\alpha_2,\alpha_1)})$ is prolongation rigid outside of the $1$-eigenspace of~$s$ only for~$s\in Z(G_0)$ with eigenvalues $j_1=1$, $j_2=-1$. In such case, $\fv=\fp_{\{2\}}$ is the parabolic subalgebra of~$\fg$ corresponding to $\Sigma=\{2\}$.

The torsion-freeness of generalized path-geometries implies that the space of local solutions of the corresponding ODEs carries a Grassmanian structure, which is a parabolic geometry on the local leaf space of type $(G,Q)$ from Theorem~\ref{6.3}.
Therefore if $(\ba\to M,\om)$ is a (locally) $s$-symmetric torsion-free generalized path-geometry with a non-zero harmonic curvature, then we conclude from Theorem \ref{6.3} that the space of local solutions~$N$ is a locally symmetric space~$(N,\un S)$, while~$M$ together with the system of (local) $s$-symmetries $S$ is a~ref\/lexion space~$(M,\un S)$ over~$(N,\un S)$, see~\cite{G1}. Let us emphasize that due to dimensional reasons and the formula \cite[Theorem~5.2.9]{parabook}, the pseudo-group generated by all local $s$-symmetries is locally transitive at $x\in M$ if and only if the Rho-tensor $\Rho^\si(n(x))$ of the $S$-invariant Weyl structure~$\si$ on~$N$ does not vanish on~$T_{n(x)}N$.
\end{Example}

We summarize geometric properties of geometries from Tables~\ref{ahs2}, \ref{ahs3} and~\ref{ahs4} in the following theorem.

\begin{Theorem}\label{6.3}
Assume $(\fg,\fp,\mu)$ is prolongation rigid outside of the $1$-eigenspace of $s$ for $s\in Z(G_0)$ such that $\fv=\fg_-^s(1)+\fp$ is a parabolic subalgebra of $\fg$ and $\fv/\fp$ inserts trivially into the harmonic curvature $\kappa_H$ of the $($locally$)$ $s$-symmetric parabolic geometry $(\ba\to M,\om)$ of type $(G,P)$. If $\kappa_H$ has a non-zero component in $\mu$ at some $x$, then:
\begin{enumerate}\itemsep=0pt
\item[$1.$] The inequality $\kappa_H\neq 0$ holds in an open dense subset of $M$, and there is a unique smooth system of $($local$)$ $s$-symmetries $S$ on $M$.
\item[$2.$] There are
\begin{itemize}\itemsep=0pt
\item a parabolic subgroup $Q$ of $G$ with the Lie algebra $\fv$ such that $P\subset Q$,
\item a neighbourhood $U_x$ of each $x\in M$ with the local leaf space $n\colon U_x\to N$ for the foliation given by the integrable distribution $Tp\circ \om^{-1}(\fv)$, and
\item a $($locally$)$ $s$-symmetric parabolic geometry $(\ba'\to N,\om')$ of type~$(G,Q)$ satisfying the assumptions of Theorem~{\rm \ref{6.2}},
\end{itemize}
such that $(\ba|_{U_x}\to U_x,\om|_{U_x})$ is isomorphic to an open subset of $(\ba'\to \ba'/P,\om')$ for each~$x$. In particular, there is a unique $s$-symmetry $\un S(n(y))$ on $(\ba'\to N,\om')$ at each $n(y)\in N$ such that $n\circ \un S(y)=\un S(n(y))\circ n$ holds for all $y\in U_x$ in the fiber over $n(y)$.
\item[$3.$] The connected component of identity of the pseudo-group generated by all local $s$-sym\-met\-ries is generated by the flows of the Lie algebra $\fl$, which is the vector subspace of $\fv^{\rm op}$, generated by $\fv^{\rm op}_+$ by the bracket $(T^\si(n(x))+R^\si(n(x)))_\si$ on $\wedge^2 (\fv^{\rm op}_+)^*\otimes \fl$ and the natural bracket on the rest of $\fl$ for the $S$-invariant Weyl structure $\si$ on $(\ba'\to N,\om')$, where $\fv^{\rm op}$ is the opposite parabolic subalgebra of $\fg$ to $\fv$.

The pseudo-group generated by all local $s$-symmetries is locally transitive at $x$ if and only if $\fv/\fp\subset \fl/(\fl\cap \fp)$, i.e., if and only if $(R^\si(n(x)))_\si$ spans the whole $\fv/\fp$.
\item[$4.$] There is a bijection between
\begin{itemize}\itemsep=0pt
\item the almost $S$-invariant Weyl structures on $U_x$, and
\item the reductions of the image in $\ba'$ of the $($unique$)$ $S$-invariant Weyl structure~$\si$ on~$N$ $($that exists due to Theorem~{\rm \ref{6.2})} to $\exp(\fg_-^s(1))\rtimes G_0\subset Q_0$.
\end{itemize}
A reduction corresponds to an $S$-invariant Weyl structure on $U_x$ if and only if it is a~holonomy reduction of~$\nabla^\si$.
\item[$5.$] In particular,
\begin{itemize}\itemsep=0pt
\item the maps $\un S(x)$ can be extended to a larger neighbourhood of~$x$ as long as the corresponding geodesic transformations of $\nabla^\si$ on $N$ are defined,
\item $\un S(x)\circ \un S(y)\circ \un S(x)^{-1}(z)=\un S({\un S(x)(y)})(z)$ holds for $(x,y,z)$ in some neighbourhood of the diagonal in $M\times M\times M$,
\item the distribution $TM^s(1)$ is the vertical distribution of the local leaf space $n\colon U_x\to N$,
\item for each eigenvalue $a$, $Tn(T_xM^s(a))$ is the $a$-eigenspace of $T_{n(x)}\un S(n(x))$ in $T_{n(x)}N$, and
\item for each eigenvalue $a$, the distribution $TM^s(a)$ is preserved by all $($local$)$ automorphisms of the parabolic geometry.
\end{itemize}
\end{enumerate}
\end{Theorem}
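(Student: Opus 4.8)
The plan is to realise the geometry locally as a correspondence space over a $(G,Q)$-geometry to which Theorem~\ref{6.2} applies, and then to transport all conclusions back along the correspondence. First I would fix the parabolic subgroup $Q$ with Lie algebra $\fv=\fg^s_-(1)+\fp$; since $\fp_+\subset\fv$ already forces $\fp^s_+(1)\subset\fv$, the reductive Levi factor $\fv_0$ of $\fv$ is generated by $\fg_0$ together with $\fg^s_-(1)$ and $\fp^s_+(1)$. As $s\in Z(G_0)$ acts trivially on $\fg_0$ and by the eigenvalue $1$ on $\fg^s_-(1)$ and $\fp^s_+(1)$, it acts as the identity on all of $\fv_0$, so $s\in Z(Q_0)$ and $s$ is a legitimate symmetry element for $(G,Q)$-geometries. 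Dually, by construction $\Ad(s)$ has no eigenvalue $1$ on $\fg/\fv\cong\fv^{\rm op}_+$, because the whole $1$-eigenspace $\fg^s_-(1)$ of $\Ad(s)$ in $\fg_-$ has been absorbed into $\fv$; this is exactly the hypothesis $\fg^s_-(1)=0$ required by Theorem~\ref{6.2} for the $Q$-geometry.

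The core step is the correspondence space reduction. The assumption that $\fv/\fp$ inserts trivially into $\kappa_H$, together with regularity and normality, propagates from the harmonic curvature to the full Cartan curvature, so $\om^{-1}(\fv)$ spans an integrable distribution and its projection $Tp\circ\om^{-1}(\fv)$ is an integrable distribution on $M$. Over a neighbourhood $U_x$ of each point I take the local leaf space $n\colon U_x\to N$ and, by the characterization of correspondence spaces \cite[Propositions~6.2 and~A.2]{GZ3}, identify $(\ba|_{U_x}\to U_x,\om|_{U_x})$ with an open piece of $(\ba'\to\ba'/P,\om')$ for a regular normal $(G,Q)$-geometry $(\ba'\to N,\om')$. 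I then verify that $(\fg,\fv,\bar\mu)$, with $\bar\mu$ the harmonic-curvature component of the $Q$-geometry corresponding to $\mu$, is prolongation rigid outside the $1$-eigenspace of $s$; combined with the vanishing of the $1$-eigenspace in $\fv^{\rm op}_+$ this lets me apply Theorem~\ref{6.2} to $N$. Thus $(\ba'\to N,\om')$ is locally affinely homogeneous, its harmonic curvature is nowhere zero, and it carries a unique smooth system $\un S$ of $s$-symmetries together with a unique $\un S$-invariant Weyl structure $\si$.

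Next I would transport the structure back. Automorphisms of the $(G,Q)$-geometry lift functorially to automorphisms of the correspondence space, so each $s$-symmetry $\un S(n(x))$ of $N$ lifts to an $s$-symmetry of $U_x$; prolongation rigidity of $(\fg,\fp,\mu)$ (Theorem~\ref{t1.2}) singles out, at each $x$, the unique $s$-symmetry $S(x)$ covering $\un S(n(x))$, which forces $n\circ\un S(y)=\un S(n(y))\circ n$ and yields Claims~(1) and~(2); local homogeneity of $N$ and the local isomorphism give that $\kappa_H$ has non-zero $\mu$-component on an open dense subset of $M$. Claim~(3) follows by projecting the pseudo-group generated by the local $s$-symmetries onto $N$ and invoking Claim~(3) of Theorem~\ref{6.2} there, with the Lie algebra $\fl\subset\fv^{\rm op}$ generated by $\fv^{\rm op}_+$ through the bracket $(T^\si+R^\si)_\si$; the $s$-symmetries act nontrivially along the fibres of $n$ exactly when $\fv/\fp\subset\fl/(\fl\cap\fp)$, which is the local transitivity criterion. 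For Claim~(4) I would use the correspondence-space description of Weyl structures: almost $S$-invariant Weyl structures on $U_x$ pull back the unique $S$-invariant $\si$ on $N$ and differ from it precisely by $\fp^s_+(1)$-valued data, which Proposition~\ref{almost-ws}(2) identifies with reductions of the image of $\si$ in $\ba'$ to $\exp(\fg^s_-(1))\rtimes G_0\subset Q_0$, the $S$-invariant ones being the holonomy reductions of $\nabla^\si$. The items of Claim~(5) then read off from Claims~(5),(6),(7) of Proposition~\ref{almost-ws} applied to $M$, together with the identification of $TM^s(1)$ as the vertical distribution of $n$ and the fact that $Tn$ intertwines the eigenspace decomposition on $M$ with that on $N$.

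The hard part will be the correspondence space reduction and the faithful transfer of hypotheses. One must argue that the harmonic-curvature insertion condition genuinely yields a local correspondence space structure by passing from $\kappa_H$ to the full curvature, that the component $\bar\mu$ and the element $s\in Z(Q_0)$ inherit prolongation rigidity outside the $1$-eigenspace, and that the a priori many $s$-symmetries lying over a single fibre of $n$ are pinned down to one smooth system by prolongation rigidity. It is precisely this uniqueness that makes the descent $n\circ\un S=\un S\circ n$ and the reflexion identity $S(x)\circ S(y)\circ S(x)^{-1}=S(\un S(x)(y))$ of Claim~(5) well defined.
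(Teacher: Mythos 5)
Your proposal is correct and takes essentially the same route as the paper: the paper's proof also realizes the geometry locally as a correspondence space over a $(G,Q)$-geometry via \cite[Theorem~3.3]{cap-cor} (using precisely your observation that $s$ acts trivially on the Levi factor of $\fv$, so $s\in Z(Q_0)$, and that $(\fv^{\rm op}_+)^s(1)=0$), applies Theorem~\ref{6.2} downstairs, and obtains Claims~(3)--(5) and the Weyl-structure bijection by the same comparison of images of Weyl structures in $\ba'$ intersecting in a reduction to $\exp(\fg_-^s(1))\rtimes G_0\subset Q_0$; whether one lifts symmetries from $N$ or descends them from $M$ is immaterial since $\ba=\ba'$ as a space. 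The one point you state more loosely than the paper is the open density in Claim~(1): ``local homogeneity of $N$'' alone does not yield it --- the paper's argument is that $\kappa_H\neq 0$ everywhere on $N$ by Theorem~\ref{6.2}, while the vanishing locus of $\kappa_H$ within each fiber of $n$ corresponds to a (Zariski) closed proper subset of $Q$, so its complement is open and dense --- but this is a one-line repair that fits naturally into your framework.
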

\begin{proof}
Claim (1) is a direct consequence of Claims (2) and (3), because $\kappa_H\neq 0$ holds for the harmonic curvature of $(\ba'\to N,\om')$ and thus $\kappa_H=0$ can hold only in the subset of the f\/iber corresponding to a (Zariski) closed subset of $Q$. Claim (2) follows from \cite[Theorem~3.3]{cap-cor} and the fact that $(\fv^{\rm op}_+)^s(1)=0$. Then Claim (3) is a clear consequence of Theorem~\ref{6.2}. Claim~(4) follows from the comparison of images in $\ba'$ of the $S$-invariant Weyl structure on $N$ and the almost $S$-invariant Weyl structures on $U_x$, because they intersect precisely in a reduction to $\exp(\fg_-^s(1))\rtimes G_0\subset Q_0$, i.e., in a subbundle with the structure group $\exp(\fg_-^s(1))\rtimes G_0$. Claim~(5) is a consequence of Claim (4) of Theorem \ref{6.2} and Claim (2).
\end{proof}

\subsection[Parabolic geometries with $\fg_{-1}^s(1)=0$]{Parabolic geometries with $\boldsymbol{\fg_{-1}^s(1)=0}$}\label{sec5.3}
There are triples $(\fg,\fp,\mu)$ that are prolongation rigid outside of the $1$-eigenspace of $s$ which admit a $1$-eigenspace in $\fg_-$ for some $s$ such that $\fg_{-1}^s(1)=0$ holds, but which do not generically satisfy $\fg_-^s(1)=0$. These are contained in Tables~\ref{ahs5} and~\ref{ahs6}.

\begin{table}[th!]\centering\caption{Theorem \ref{6.4}, part with $|\Si|=1$.}\label{ahs5}\vspace{1mm}
\begin{tabular}{|c|c|c|c|}
\hline
$\fg$ & $\Sigma$ & $j_{i_1}$ & $\mu$\\
\hline
\hline
$\mathfrak{su}(1,2)$ & $\{1\}$& $\sqrt[4]{1}$ & $(\alpha_1,\alpha_2)$ \\
\hline
$\mathfrak{su}(1,3)$, $\mathfrak{su}(2,2)$ & $\{1\}$& $\phi_1$ & $(\alpha_1,\alpha_3)$ \\
\hline
$\mathfrak{su}(1,3)$, $\mathfrak{su}(2,2)$ & $\{1\}$&$\sqrt[3]{1}$ & $(\alpha_1,\alpha_2)$ \\
\hline
$\mathfrak{su}(q,n+1-q)$ & $\{1\}$& $\phi_1$ & $(\alpha_1,\alpha_n)$ \\
\hline
$\mathfrak{su}(q,n+1-q)$ & $\{1\}$&$\sqrt[3]{1}$ & $(\alpha_1,\alpha_2)$ \\
\hline
$\mathfrak{su}(q,n+1-q)$ & $\{2\}$ &$\sqrt[3]{1}$& $(\alpha_2,\alpha_1)$\\
\hline
$\mathfrak{so}(3,5)$&$\{ 3\}$&$\sqrt[3]{1}$&$(\alpha_3,\alpha_2)$ \\
\hline
$\mathfrak{sp}(4,\mathbb{C})$ & $\{1\}$&$\phi_1$& $(\alpha_{1},\alpha_{1'})$\\
\hline
$\mathfrak{sp}(6,\{\mathbb{R,C}\})$ & $\{1\}$ &$\sqrt{1}$& $(\alpha_1,\alpha_2)$ \\
\hline
$\mathfrak{sp}(6,\mathbb{C})$ & $\{1\}$&$\phi_1$& $(\alpha_1,\alpha_1')$ \\
\hline
$\mathfrak{sp}(1,2), \mathfrak{sp}(6,\{\mathbb{R, C}\})$ & $\{2\}$& $\sqrt{1}$ & $(\alpha_2,\alpha_1)$ \\
\hline
$\mathfrak{sp}(2n,\{\mathbb{R,C}\})$ & $\{1\}$ &$\sqrt{1}$& $(\alpha_1,\alpha_2)$ \\
\hline
$\mathfrak{sp}(2n,\mathbb{C})$ & $\{1\}$&$\phi_1$& $(\alpha_1,\alpha_1')$ \\
\hline
$\mathfrak{sp}(q,n-q), \mathfrak{sp}(2n,\{\mathbb{R, C}\})$ & $\{2\}$& $\sqrt{1}$ & $(\alpha_2,\alpha_1)$ \\
\hline
$\mathfrak{g}_{2}(\{2,\mathbb{C}\})$ & $\{1\}$&$\sqrt[4]{1}$&$(\alpha_1,\alpha_2)$\\
\hline
\end{tabular}
\end{table}

\begin{table}[th!]\centering\caption{Theorem \ref{6.4}, part with $|\Si|=2$.}\label{ahs6}\vspace{1mm}

\begin{tabular}{|c|c|c|c|c|}
\hline
$\fg$ & $\Sigma$ & $j_{i_1}$& $j_{i_2}$& $\mu$ \\
\hline
\hline
$\mathfrak{sl}(3,\{\R,\C\})$ & $\{1,2\}$ & $\sqrt[4]{1}$ & $\sqrt[4]{1}^{3}$& $(\alpha_1,\alpha_2)$ \\
\hline
$\mathfrak{sl}(3,\C)$ & $\{1,2\}$ & $\sqrt[5]{1}$& $\sqrt[5]{1}^{3}$& $(\alpha_1,\alpha_2)$ \\
\hline
$\mathfrak{sl}(4,\{\R,\C\})$ & $\{1,3\}$ & & $j_1^{-1}$& $(\alpha_1,\alpha_3)$ \\
\hline
$\mathfrak{sl}(n+1,\{\R,\C\})$ & $\{1,n\}$ & & $j_1^{-1}$& $(\alpha_1,\alpha_n)$ \\
\hline
$\mathfrak{so}(2,3), \mathfrak{so}(5,\mathbb{C})$ & $\{1,2\}$&$\sqrt[4]{1}$ & $\sqrt[4]{1}^{3}$&$(\alpha_{1},\alpha_2)$\\
\hline
$\mathfrak{so}(5,\mathbb{C})$ & $\{1,2\}$&$j_1^5= 1$ or $j_1^7= 1$&$j_1^3$&$(\alpha_{1},\alpha_2)$\\
\hline
$\mathfrak{so}(3,4)$&$\{ 1,3\}$&$j_3^3$&& $(\alpha_3,\alpha_2)$ \\
\hline
\end{tabular}
\end{table}

\begin{Example}We see that partially integrable almost CR-structures of hypersurface type are contained in Table~\ref{ahs5}, i.e., $\fg=\mathfrak{su}(q,n+1-q)$, $q>0$, $n>1$ and $\Sigma=\{1\}$. With the exception of the case $n=2$, there are two possible components of the harmonic curvature such that the triple $(\mathfrak{su}(q,n+1-q),\fp_{\{1\}},\mu)$ is prolongation rigid outside of the $1$-eigenspace of $s$ for $s\in Z(G_0)$ with the specif\/ied eigenvalue. Moreover, $\fg_-^s(1)=\fg_{-2}$ holds in all the cases when $(\mathfrak{su}(q,n+1-q),\fp_{\{1\}},\mu)$ is prolongation rigid outside of the $1$-eigenspace of $s$. Let us emphasize that the possibility $s^3=\id$ is available for both components of the harmonic curvature. Since $\fg_-^s(1)=\fg_{-2}$, we need some additional assumptions in Theorem \ref{6.4} to show that $(M,S)$ is (locally, under these assumptions) either a (locally) homogeneous one-dimensional f\/iber bundle over (reduced) $\mathbb{S}^1$-space, or a $\mathbb{Z}_3$-space or a symmetric space (due to \cite[Proposition~7.3]{GZ3}, see also~\cite{Loos2}) that carries some $\un S$-invariant Weyl connection on~$TM$. In particular, all such parabolic geometries can be classif\/ied using \cite[Theorem~5.1.4]{disertace} and Theorem~\ref{6.4}, if one knows the classif\/ication of $\mathbb{S}^1$-spaces, $\mathbb{Z}_3$-spaces and symmetric spaces. Let us emphasize that a part of the classif\/ication is done in~\cite{G2}.
\end{Example}

As mentioned in the example, we need an additional assumption on where the local $s$-symmetries are def\/ined for parabolic geometries in question.

\begin{Theorem} \label{6.4}
Let $(\fg,\fp,\mu)$ be prolongation rigid outside of the $1$-eigenspace of $s$ for $s\in Z(G_0)$ such that $\fg_{-1}^s(1)=0$ holds. Assume that for the $($locally$)$ $s$-symmetric parabolic geometry $(\ba\to M,\om)$ of type~$(G,P)$, the open subset~$U$ of~$M$ containing the points at which $\kappa_H$ has a non-zero component in~$\mu$ is non-trivial, and the maps $\un S(x)(y)$ and $\un S(x)\circ \un S(y)^{-1}(z)$ are defined on neighbourhoods of diagonals in $U\times U$ and $U\times U\times U$ for the unique system $S$ of $($local$)$ $s$-symmetries on $U$. Then:
\begin{enumerate}\itemsep=0pt
\item[$1.$]
The parabolic geometry is $($locally$)$ homogeneous and $U=M$, i.e., $\kappa_H(x)\neq 0$ at all $x\in M$ and there is a unique smooth system of $($local$)$ $s$-symmetries $S$ on $M$.
\item[$2.$] There is a class of distinguished Weyl structures characterized by one of the following equivalent properties for each Weyl structure $\si$ in the class:
\begin{enumerate}\itemsep=0pt
\item[$(a)$] The equalities $\nabla^\si T^\si=0$, $s.(T^\si)_\si=(T^\si)_\si$, $\nabla^\si R^\si=0$ and $s.(R^\si)_\si=(R^\si)_\si$ hold for the torsion and the curvature of the Weyl connection $\nabla^\si$.
\item[$(b)$] The Weyl connection $\nabla^\si$ is $\un S$-invariant.
\item[$(c)$] All $($local$)$ automorphisms of the parabolic geometry cover affine transformations of~$\nabla^\si$.
\item[$(d)$] All $($local$)$ diffeomorphisms $s_x^\si$ are affine transformations of~$\nabla^\si$.
\end{enumerate}
Two Weyl structures $\si$ and $\si\exp(\U)_\si$ from the class differ by a $G_0$-equivariant function $(\U)_\si\colon \ba_0\to \fp_+^s(1)$ which is invariant with respect to all $($local$)$ automorphisms of the parabolic geometry and is provided by an invariant element of~$\fp_+^s(1)$.
\item[$3.$] The pseudo-group generated by all local $s$-symmetries is transitive on $M$ and its connected component of identity is generated by the flows of the Lie algebra $\fl$, which is the vector subspace of $\fg_-\oplus \fg_0$, generated by $\fg_-$ by the bracket $(T^\si+R^\si)_\si$ on $\wedge^2 \fg_-^*\otimes \fl$ and the natural bracket on the rest of $\fl$.
\item[$4.$] The equalities \begin{gather*}\un S(x)=\un s_{\si(u_0)}=s_x^\si\end{gather*} hold for any Weyl structure $\si$ from~$(2)$. In particular,
\begin{itemize}\itemsep=0pt
\item the maps $\un S(x)$ can be extended to a larger neighbourhood of $x$ as long as the corresponding geodesic transformations of $\nabla^\si$ are defined,
\item for each eigenvalue $a$, the distribution $TM^s(a)$ is preserved by all $($local$)$ automorphisms of the parabolic geometry.
\end{itemize}
\item[$5.$] The distribution $TM^s(1)$ is integrable and for each $x \in M$, the leaf $\mathcal{F}_x$ of the foliation $\mathcal{F}$ of $TM^s(1)$ through $x$ is a totally geodesic submanifold for arbitrary Weyl structure.

Let $n\colon U_x\to N$ be a sufficiently small local leaf space of $TM^s(1)$.
\begin{enumerate}\itemsep=0pt
\item[$(a)$] There is a unique local diffeomorphism $\un S(n(y))$ of the local leaf space $N$ at each $n(y)\in N$ such that $\un S(n(y))\circ n=n\circ \un S(x)$ holds for all $y\in U_x$, and
\item[$(b)$] for each eigenvalue $a$, $T_yn(T_yM^s(a))$ is the $a$-eigenspace of $T_{n(y)}\un S(n(x))$ in $T_{n(y)}N$ for all $y\in U_x$.
\end{enumerate}
\end{enumerate}
\end{Theorem}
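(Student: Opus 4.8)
The plan is to mirror the proof of Theorem~\ref{6.2}, working first on the open set $U$ where $\ka_H$ has a non-zero component in $\mu$ and promoting the conclusions to all of $M$ once local homogeneity is established. Restricting to $U$, the prolongation rigidity outside of the $1$-eigenspace of $s$ gives a unique system $S$ of (local) $s$-symmetries, and Proposition~\ref{almost-ws} supplies a smooth almost $S$-invariant Weyl structure together with all of its structural consequences; in particular its Claim~(2) shows that two almost $S$-invariant Weyl structures differ by a $G_0$-equivariant function with values in $\fp_+^s(1)$, which is the last assertion of Claim~(2). I would fix this class as the candidate class of distinguished Weyl structures and verify the equivalences (2a)--(2d).

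The equivalences (2b)$\Leftrightarrow$(2c)$\Leftrightarrow$(2d) follow exactly as in Theorem~\ref{6.2} from Propositions~\ref{auto-action} and~\ref{geodesic-s-symm} and Corollary~\ref{c3.3}, together with the local homogeneity established below. The essential new point is (2b)$\Rightarrow$(2a). For an $\un S$-invariant Weyl connection $\nabla^\si$, the tensors $T^\si$, $R^\si$ and their covariant derivatives are $\un S$-invariant, so for $\xi$ with $(\xi)_\si(u_0)\in\fg_-^s(a^{-1})$ the computation of Theorem~\ref{6.2} yields $(\nabla^\si_\xi T^\si)_\si=a(\nabla^\si_\xi T^\si)_\si$ and $(\nabla^\si_\xi R^\si)_\si=a(\nabla^\si_\xi R^\si)_\si$, which already forces vanishing in all directions transverse to $\fg_-^s(1)$. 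To treat the remaining directions I would exploit $\fg_{-1}^s(1)=0$: this puts $\fg_-^s(1)$ into degrees $\le -2$, so that $\fg_{-2}^s(1)=\sum_a[\fg_{-1}^s(a),\fg_{-1}^s(a^{-1})]$ and, inductively, every element of $\fg_-^s(1)$ is a sum of iterated brackets of transverse directions. Feeding this into the Ricci and Bianchi identities for $\nabla^\si$ expresses $\nabla^\si T^\si$ and $\nabla^\si R^\si$ in $\fg_-^s(1)$-directions through the already vanishing transverse derivatives, yielding $\nabla^\si T^\si=0$ and $\nabla^\si R^\si=0$.

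Once (2a) holds, $\nabla^\si$ is locally affinely homogeneous, and as in Theorem~\ref{6.2} I would encode $(M,\nabla^\si)$ as a locally homogeneous Cartan geometry of type $(\fg_-\rtimes{\rm Gl}(\fg_-),{\rm Gl}(\fg_-))$ and apply \cite[Lemma~2.2]{GZ2} to produce a (local) affine transformation with coordinates $s$, which by Proposition~\ref{geodesic-s-symm} and uniqueness of $s$-symmetries gives (2c), (2d) and the equalities $\un S(x)=\un s_{\si(u_0)}=s_x^\si$ of Claim~(4). The algebra $\fl$ is generated as in Theorem~\ref{6.2}: for an infinitesimal affine transformation $X$ the element $X-\Ad(s)(X)$ lies in $\fl$, covering $\bigoplus_{a\ne1}\fg_-^s(a)$. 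Here the additional hypothesis enters decisively, since $X-\Ad(s)(X)$ vanishes on $\fg_-^s(1)$ and transitivity in the $\fg_-^s(1)$-directions is not automatic; I would use that $\un S(x)(y)$ and $\un S(x)\circ\un S(y)^{-1}(z)$ are defined near the diagonals to compose symmetries and recover the missing directions, so that $\fg_-\subset\fl$ and the pseudo-group is transitive. Transitivity gives local homogeneity, hence $U=M$ and Claims~(1) and~(3), and Claim~(4) follows together with Claims~(5) and~(7) of Proposition~\ref{almost-ws}.

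For Claim~(5), I would identify $TM^s(1)$ with $Tp_0\circ(\om_-^\si\oplus\om_0^\si)^{-1}(\fg_-^s(1))$ by Claim~(7) of Proposition~\ref{almost-ws}; integrability then follows because $\fg_-^s(1)$ is a subalgebra of $\fg_-$ and, by Claims~(8) and~(9) of Proposition~\ref{almost-ws} together with the $s$-invariance of $T^\si$, the bracket $[\xi,\eta]=\nabla^\si_\xi\eta-\nabla^\si_\eta\xi-T^\si(\xi,\eta)$ of leaf-tangent fields stays leaf-tangent. The totally geodesic property of the leaves for arbitrary Weyl structure follows from $\fg_{-1}^s(1)=0$ by a direct computation with the change-of-Weyl-connection formula~(\ref{change-wc}). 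Finally, since $\un S(x)$ preserves $TM^s(1)$ it maps leaves to leaves, and the relation $S(x)\circ S(y)\circ S(x)^{-1}=S(\un S(x)(y))$ of Proposition~\ref{almost-ws}, together with the domain hypothesis, guarantees that $\un S(x)$ descends to a well-defined $\un S(n(y))$ on a small local leaf space $n\colon U_x\to N$, with the eigenspace statement~(5b) coming from Claim~(7) of Proposition~\ref{almost-ws}. The main obstacle is the implication (2b)$\Rightarrow$(2a): proving that $\nabla^\si T^\si$ and $\nabla^\si R^\si$ vanish in the $\fg_-^s(1)$-directions, where the invariance argument of Theorem~\ref{6.2} gives nothing and one must exploit $\fg_{-1}^s(1)=0$ through the Bianchi identities, is the technical heart of the theorem and also the place where the hypotheses on the domains of $\un S$ are indispensable.
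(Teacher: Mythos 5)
There is a genuine gap, and it sits exactly at what you call the ``technical heart.'' Your plan takes the class from Proposition~\ref{almost-ws} as the candidate distinguished class and then tries to prove (2b)$\Rightarrow$(2a) directly, but Claim~(2) concerns $\un S$-invariant Weyl connections, and in the present setting $\fg_{-1}^s(1)=0$ with $\fg_-^s(1)\neq 0$ forces $\fp_+^s(1)\neq 0$, so Proposition~\ref{almost-ws} only delivers \emph{almost} $\un S$-invariant structures (invariant at one point at a time); the existence of even a single genuinely $\un S$-invariant Weyl connection no longer follows -- the paper flags precisely this obstruction at the start of its proof. The paper's argument therefore runs in the opposite order from yours: it first proves local homogeneity via the new Lemma~\ref{infaut}, which is where the diagonal-domain hypotheses are actually used -- differentiating $t\mapsto \un S(c(t))\circ \un S(x)^{-1}$ yields infinitesimal automorphisms $L_\xi$ with $L_\xi(x)=(1-a)\xi$ on the $a$-eigenspace, so $\xi\mapsto L_\xi$ is injective on the bracket-generating distribution corresponding to $\fg_{-1}$ precisely because $\fg_{-1}^s(1)=0$, and regularity then gives local homogeneity -- and only afterwards invokes Theorem~\ref{5.1} together with \cite[Theorem~1.3]{GZ3} to produce a $K$-invariant Weyl connection for which all local automorphisms are affine, i.e., (2c), from which (2a), (2b), (2d) and the freedom by a $K$-\emph{invariant} element of $\fp_+^s(1)$ (strictly stronger than the pointwise $\fp_+^s(1)$-freedom of Proposition~\ref{almost-ws} that you quote) all follow. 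You do gesture at composing symmetries near the diagonal to ``recover the missing directions,'' which is the right instinct, but you never construct these infinitesimal automorphisms, and your logical order -- (2a) first, homogeneity second -- cannot be executed because your proof of (2a) presupposes the invariant connection whose existence is the problem.

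Moreover, the Ricci/Bianchi argument you propose for the $\fg_-^s(1)$-directions is asserted rather than proved, and it does not visibly close. Granting $\fg_{-2}^s(1)=\sum_a[\fg_{-1}^s(a),\fg_{-1}^s(a^{-1})]$ (which is correct), the Ricci identity applied to transverse fields $\xi,\eta$ with the transverse derivatives already vanishing produces an identity of the shape $\nabla^\si_{T^\si(\xi,\eta)}T^\si=R^\si(\xi,\eta)\bullet T^\si$: an \emph{equation} coupling the unknown derivative along the $1$-eigencomponent of $T^\si(\xi,\eta)$ to a curvature action that has no reason to vanish; no cancellation is exhibited, and the paper never attempts this route -- it obtains (2a) from (2c) through the homogeneous-case machinery of \cite[Theorem~1.3]{GZ3} once homogeneity is in hand. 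Consequently your localization of where the domain hypotheses are ``indispensable'' is misplaced: they enter only through Lemma~\ref{infaut}. A smaller but real defect: in Claim~(5a) it is not enough that $\un S(x)$ maps leaves of $TM^s(1)$ to leaves; one needs $S(x)=S(y)$ for all $y\in\mathcal{F}_x$, which the paper gets by showing $\un S(x)$ fixes $\mathcal{F}_x$ pointwise (generalized geodesics in the $1$-eigendirections are fixed since $\Ad(s)$ is trivial on $\fg_-^s(1)$ and $\mathcal{F}_x$ is totally geodesic) and then invoking uniqueness of $s$-symmetries; your conjugation relation $S(x)\circ S(y)\circ S(x)^{-1}=S(\un S(x)(y))$ alone does not yield the required constancy along the fibers.
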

\begin{proof}
The proof is similar to the proof of Theorem \ref{6.2}. However, we need a dif\/ferent method to prove the local homogeneity in Claim (3), because the existence of some $S$-invariant Weyl structure does not follow from Proposition \ref{almost-ws} anymore. Therefore we need an additional assumption on the system $S$ on $U$ in order to apply the following lemma. Nevertheless, the fact from Proposition \ref{almost-ws} that $S(x)=s_{\si(u_0)}$ holds for any almost $S$-invariant Weyl structure $\si$ implies that the system $S$ is smooth on $U$.

\begin{Lemma}\label{infaut}
Suppose the smooth system of $($local$)$ $s$-symmetries $S$ on $M$ satisfies that the maps $\un S(x)(y)$ and $\un S(x)\circ \un S(y)^{-1}(z)$ are defined on neighbourhoods of diagonals in $M\times M$ and $M\times M\times M$.
\begin{itemize}\itemsep=0pt
\item If $c(t)$ is a curve in $M$ such that $c(0)=x$ and $\xi:=\frac{d}{dt}|_{t=0}c(t)$,
then the vector field
\begin{gather*}L_\xi(y):=\frac{d}{dt}\Big|_{t=0} \un S(c(t)) \circ \un S(x)^{-1}(y)\end{gather*}
 is defined for $y$ in some neighbourhood of $x$ in $M$.
\item Then $L_\xi(y)$ is an infinitesimal automorphism of the parabolic geometry.
\item If $\xi$ is contained in the $a$-eigenspace of $T_x\un S(x)$, then $L_\xi(x)=(1-a)\xi$.
\item The map $\xi\mapsto L_\xi$ for $\xi \in T_xM$ is a linear map onto the Lie algebra of local infinitesimal automorphisms of the parabolic geometry. Its kernel consists of the $1$-eigenspace of $T_x\un S(x)$ in $T_xM$, and it is injective on the sum of the remaining eigenspaces in $T_xM$.
\end{itemize}
\end{Lemma}
\begin{proof}[Proof of Lemma \ref{infaut}]
Since $S(c(0)) \circ S(x)^{-1}=\id_\ba$, there is a natural lift of $L_\xi(y)$ onto the $P$-invariant vector f\/ield $\frac{d}{dt}|_{t=0}S(c(t))\circ S(x)^{-1}(u)$ for $u\in \ba$ in the f\/iber over $y$. Since $S(c(t)) \circ S(x)^{-1}$ is an automorphism, the vector f\/ield is $P$-invariant and $\frac{d}{dt}|_{t=0}(S(c(t)) \circ S(x)^{-1})^*\om=0$. Thus $L_\xi(y)$ is an inf\/initesimal automorphism.

Since $\un S(c(t))(c(t))=c(t)$, we conclude that $L_\xi(x)+(\un S(x))_*(\xi)=\xi$. Thus $L_\xi(x)=\xi-(\un S(x))_*(\xi)$ and the claim follows due to the linearity of $T_x\un S(x)$.
\end{proof}

Let us continue in the proof of Theorem \ref{6.4}. Since the map $\xi \mapsto L_\xi$ from Lemma~\ref{infaut} is injective on the bracket generating distribution given by $\fg_{-1}$ due to the assumption \smash{$\fg_{-1}^s(1)=0$}, the local homogeneity follows from the regularity of the parabolic geometry. This implies Claim~(1). Then Claim~(4) follows again from Claim~(2).

Since we are on a (locally) homogeneous (locally) $s$-symmetric parabolic geometry, the parabolic geometry can be described as in Theorem~\ref{5.1}. It follows from \cite[Theorem~1.3]{GZ3} that there is a $K$-invariant Weyl connection $\nabla$ on the $K$-homogeneous parabolic geometry described Theorem~\ref{5.1} such that all local automorphisms of the parabolic geometry are af\/f\/ine transformations of $\nabla$. Therefore it follows from the last claim of Theorem~\ref{5.1} that the pullback of~$\nabla$ to~$M$ does not depend on the local isomorphism with the $K$-homogeneous parabolic geo\-met\-ry. Therefore we obtain a Weyl structure~$\si$ that satisf\/ies~(2c), which implies the remaining parts~(2a),~(2b) and~(2d). It is clear that the $K$-invariant Weyl connection $\nabla$ from \cite[Theorem~1.3]{GZ3} is not unique and the dif\/ference between two such Weyl structures is the claimed $\U$ provided by a $K$-invariant element of~$\fp_+^s(1)$.

Proposition~\ref{geodesic-s-symm} implies that the Weyl structure $\si$ satisf\/ies (2b) if and only if it satisf\/ies~(2d). Again, results in \cite[Theorem~1.3]{GZ3} imply that (2b) implies (2c) and the same arguments as in the proof of Theorem~\ref{6.2} show that~(2b) implies~(2a) and~(2a) implies~(2d).

To prove Claim (5), we use the fact that $s.(T^\si(x))_\si=(T^\si(x))_\si$ holds for the torsion of the $\un S$-invariant Weyl connection~$\na^\si$. Thus $TM^s(1)$ is involutive, because each (almost) $\un S$-invariant Weyl connection $\nabla^\si$ preserves~$TM^s(1)$. Moreover, the formula for the dif\/ference between $\nabla^\si$ and arbitrary Weyl connection implies that the dif\/ference in the parallel transport is an element of~$TM^s(1)$ at each point of~$\mathcal{F}_x$. Thus~$\mathcal{F}_x$ is a totally geodesic submanifold for any Weyl connection.

We know that $\un S(x)=s^{\si}_x$ and this implies $\un S(x)|_{\mathcal{F}_x}=s^{\si}_x|_{\mathcal{F}_x}=\id_{\mathcal{F}_x}$. If $v=\Fl_1^{\om^{-1}(X)}(u)$ for $X\in \fg_{-}^s(1)$, then $S(x)v=vs$ holds and $y=p\circ \Fl_1^{\om^{-1}(X)}(u)\in \mathcal{F}_x$, because $\mathcal{F}_x$ is a totally geodesic submanifold. Thus $\un S(x)$ is covered by the $s$-symmetry at~$y$ and $S(x)=S(y)$ holds in some neighbourhood of $x$ due to the uniqueness of $s$-symmetries. Consequently, Claim (5a) holds on a suf\/f\/iciently small local leaf space and Claim~(5b) is a clear consequence of Claim~(4).
\end{proof}

\subsection[Parabolic geometries with $\fg_{-1}^s(1)+\fp$ in a distinguished parabolic subalgebra]{Parabolic geometries with $\boldsymbol{\fg_{-1}^s(1)+\fp}$ \\ in a distinguished parabolic subalgebra}\label{sec5.4}

There are triples $(\fg,\fp,\mu)$ that are prolongation rigid outside of the $1$-eigenspace of $s$ that admit a $1$-eigenspace in $\fg_-$ for some $s$ such that $\fg_{-1}^s(1)+\fp\subset \fv\subset \fg_{-}^s(1)+\fp$ holds for some parabolic subalgebra $\fv$ of $\fg$ such that the harmonic curvature vanishes on insertions of elements of $\fv/\fp$ at all points of~$M$. These are listed in Tables~\ref{ahs7}, \ref{ahs8} and \ref{ahs9}, due to \cite[Propositions~6.2 and~A.2]{GZ3}.

\begin{table}[th!]\centering\caption{Theorem \ref{6.5}, part with $|\Si|=2$.}\label{ahs7}\vspace{1mm}

\begin{tabular}{|c|c|c|c|c|c|}
\hline
$\fg$ & $\Sigma$ & $j_{i_1}$& $j_{i_2}$& $\mu$&PR \\
\hline
\hline
$\mathfrak{sl}(4,\C)$ & $\{1,2\}$ &&$j_2^2$ & $(\alpha_2,\alpha_3)$ & \\
\hline
$\mathfrak{sl}(4,\C)$ & $\{1,3\}$ &&$j_1^2$ &$(\alpha_1,\alpha_2)$& \\
\hline
$\mathfrak{su}(2,2)$&$\{1,2\}$&& $2r_1$&$(\alpha_1,\alpha_{3})$&$r_1=0$\\
\hline
$\mathfrak{sl}(n+1,\C)$ & $\{1,3\}$ &&$j_1^2$ &$(\alpha_1,\alpha_2)$& \\
\hline
$\mathfrak{sl}(n+1,\C)$ & $\{2,3\}$ &&$j_2^2$ & $(\alpha_2,\alpha_1)$ & \\
\hline
$\mathfrak{sl}(n+1,\C)$ & $\{1,n\}$ &&$j_1^2$ &$(\alpha_1,\alpha_2)$& \\
\hline
$\mathfrak{sl}(n+1,\C)$ & $\{2,n\}$ &&$j_2^2$ & $(\alpha_2,\alpha_1)$ & \\
\hline
$\mathfrak{su}(n,n)$&$\{1,n\}$&& $2r_1$&$(\alpha_1,\alpha_{2n-1})$&$r_1=0$\\
\hline
$\mathfrak{so}(7,\C)$&$\{ 1,3\}$&$j_3^3$&& $(\alpha_3,\alpha_2)$& \\
\hline
$\mathfrak{so}(8,\C)$&$\{ 1,3\}$& & $j_1^2$& $(\alpha_1,\alpha_2)$& \\
\hline
$\mathfrak{sp}(4,\mathbb{C})$& $\{1,2\}$ && $2r_1$& $(\alpha_{1},\alpha_{1'})$ & $r_1=0$ \\
\hline
$\mathfrak{sp}(6,\{\R,\C\})$ & $\{1,2\}$ & $j_2^{-2}$ & & $(\alpha_2,\alpha_1)$ & $j_2=-1$ \\
\hline
$\mathfrak{sp}(6,\{\R,\C\})$ & $\{1,2\}$ & &$j_1^2$ & $(\alpha_1,\alpha_2)$&\\
\hline
$\mathfrak{sp}(6,\{\R,\C\})$ & $\{1,3\}$& & $j_1^2$& $(\alpha_1,\alpha_2)$ & \\
\hline
$\mathfrak{sp}(6,\mathbb{C})$& $\{1,3\}$ && $2r_1$& $(\alpha_{1},\alpha_{1'})$ & $r_1=0$ \\
\hline
$\mathfrak{sp}(6,\{\R,\C\})$& $\{2,3\}$ & & $j_2^2$& $(\alpha_2,\alpha_1)$ & \\
\hline
$\mathfrak{sp}(2n,\{\R,\C\})$ & $\{1,2\}$ & $j_2^{-2}$ & & $(\alpha_2,\alpha_1)$ & $j_2=-1$ \\
\hline
$\mathfrak{sp}(2n,\{\R,\C\})$ & $\{1,n\}$& & $j_1^2$& $(\alpha_1,\alpha_2)$ &$j_1=-1$ \\
\hline
$\mathfrak{sp}({n \over 2},{n \over 2}),\mathfrak{sp}(2n,\{\R,\C\})$& $\{2,n\}$ & & $j_2^2$& $(\alpha_2,\alpha_1)$ & $j_2=-1$ \\
\hline
$\mathfrak{sp}(2n,\{\R,\C\})$ & $\{1,2\}$ & &$j_1^2$ & $(\alpha_1,\alpha_2)$&\\
\hline
$\mathfrak{sp}(2n,\mathbb{C})$& $\{1,n\}$ && $2r_1$& $(\alpha_{1},\alpha_{1'})$ & $r_1=0$ \\
\hline
\end{tabular}
\end{table}

\begin{table}[th!]\centering\caption{Theorem \ref{6.5}, part with $|\Si|=3$.}\label{ahs8}\vspace{1mm}

\begin{tabular}{|c|c|c|c|c|c|c|}
\hline
$\fg$ & $\Sigma$ & $j_{i_1}$& $j_{i_2}$& $j_{i_3}$&$\mu$&PR \\
\hline
\hline
$\mathfrak{sl}(4,\mathbb{C})$ &$\{1,2,3\}$& & & $j_1j_2^2 $ &$(\alpha_2,\alpha_1)$& $j_1=1$ \\
\hline
$\mathfrak{sl}(n+1,\mathbb{C})$ &$\{1,2,3\}$& & & $j_1j_2^2 $ &$(\alpha_2,\alpha_1)$& $j_1=1$ \\
\hline
$\mathfrak{sl}(n+1,\mathbb{C})$ &$\{1,2,n\}$& & & $j_1j_2^2 $ &$(\alpha_2,\alpha_1)$& $j_1=1$ \\
\hline
$\mathfrak{sl}(n+1,\{\mathbb{R, C}\})$&$\{1,p,n\}$, $p>2$ && $j_1j_n$& &$(\alpha_1,\alpha_n)$&$j_1=j_n^{-1}$ \\
\hline
$\mathfrak{so}(8,\mathbb{C})$& $\{1,2,4\}$&&& $j_1^2 $&$(\alpha_1,\alpha_2)$& $j_2=1$\\
\hline
$\mathfrak{sp}(6,\{\mathbb{R, C}\})$&$\{1,2,3\}$&&& $j_1j_2^2 $&$(\alpha_2,\alpha_1)$ &$j_1=1$\\
\hline
$\mathfrak{sp}(2n,\{\mathbb{R, C}\})$&$\{1,2,p\}$, $p<n$&& & $\sqrt{j_1j_2^2} $&$(\alpha_2,\alpha_1)$&$j_1=1$, $j_2=-1$, $j_p=1$ \\
\hline
$\mathfrak{sp}(2n,\{\mathbb{R, C}\})$&$\{1,2,n\}$&&& $j_1j_2^2 $&$(\alpha_2,\alpha_1)$ &$j_1=1$, $j_2=-1$\\
\hline
\end{tabular}
\end{table}

\begin{table}[th!]\centering\caption{Theorem \ref{6.5}, part with $|\Si|=4$.}\label{ahs9}\vspace{1mm}

\begin{tabular}{|c|c|c|c|c|}
\hline
$\fg$ & $\Sigma$ & eigenvalues& $\mu$& PR \\
\hline
\hline
$\mathfrak{sl}(n+1,\mathbb{C})$ & $\{1,2,3,q\}$, $q<n$&$j_q=j_1j_2^2j_p^{-1}$&$(\alpha_2,\alpha_1)$& $j_1=1$, $j_p=j_2^2$\\
\hline
$\mathfrak{sl}(n+1,\mathbb{C})$ & $\{1,2,p,n\}$, $3<p$&$j_n=j_1j_2^2j_p^{-1}$&$(\alpha_2,\alpha_1)$& $j_1=1$, $j_p=1$\\
\hline
\end{tabular}
\end{table}

\begin{Example}
Let us focus on Lagrangean complex contact geometries, i.e., $\fg=\mathfrak{sl}(n+1,\mathbb{C})$ and $\Sigma=\{1,n\}$. If we consider the triple $(\mathfrak{sl}(n+1,\mathbb{C}),\fp_{\{1,n\}},\mu_{(\alpha_1,\alpha_2)})$ from Table~\ref{ahs7} that is prolongation rigid outside of the $1$-eigenspace of~$s$, then dif\/ferent situations arise depending on the choice of $s\in Z(G_0)$. If $j_1=-1$, then $\fv=\fg_{-}^s(1)+\fp$ is a parabolic subalgebra satisfying the assumptions of Theorem~\ref{6.3}. If $j_1=\sqrt[3]{1}$, then $\fg_{-}^s(1)=\fg_{-2}$ and we need the assumptions of Theorem~\ref{6.4} to state the results. We can apply Theorem~\ref{6.2} for the other values~$j_1$.
\end{Example}

In general, $\fv$ can be a proper subspace of $\fg_{-}^s(1)+\fp$ and we can (locally) apply the general result for parabolic geometries from~\cite{cap-cor} to obtain the following theorem.

\begin{Theorem}\label{6.5}
Assume $(\fg,\fp,\mu)$ is prolongation rigid outside of the $1$-eigenspace of $s$ for $s\in Z(G_0)$ such that $\fv$ is a maximal parabolic subalgebra of $\fg$ such that $\fg_{-1}^s(1)+\fp\subset \fv\subset \fg_{-}^s(1)+\fp$ and $\fv/\fp$ inserts trivially into the harmonic curvature of the (locally) $s$-symmetric parabolic geometry $(\ba\to M,\om)$ of type $(G,P)$. Assume the open subset $U$ of $M$ containing all points at which $\kappa_H$ has a non-zero component in $\mu$ is non-trivial, and the maps $\un S(x)(y)$ and $\un S(x)\circ \un S(y)^{-1}(z)$ are defined on neighbourhoods of diagonals in $U\times U$ and $U\times U\times U$ for the unique system $S$ of $($local$)$ $s$-symmetries on $U$. Then:
\begin{enumerate}\itemsep=0pt
\item[$1.$] The set $U$ is an open dense subset of $M$ and there is a unique smooth system of $($local$)$ $s$-symmetries $S$ on $M$.
\item[$2.$] There are
\begin{itemize}\itemsep=0pt
\item a parabolic subgroup $Q$ of $G$ with the Lie algebra $\fv$ such that $P\subset Q$,
\item a neighbourhood $U_x$ of each $x\in M$ with a local leaf space $n\colon U_x\to N$ for the foliation given by the integrable distribution $Tp\circ \om^{-1}(\fv)$, and
\item a $($locally$)$ $s$-symmetric parabolic geometry $(\ba'\to N,\om')$ of type $(G,Q)$ satisfying the assumptions of Theorem~{\rm \ref{6.4}},
\end{itemize}
such that $(\ba|_{U_x}\to U_x,\om|_{U_x})$ is isomorphic to an open subset of $(\ba'\to \ba'/P,\om')$ for each~$x$. In particular, there is a unique $s$-symmetry $\un S(n(y))$ of $(\ba'\to N,\om')$ at each $n(y)\in N$ such that $n\circ \un S(y)=\un S(n(y))\circ n$ holds for all $y\in U_x$ in the fiber over $n(y)$.
\item[$3.$] The connected component of identity of the pseudo-group generated by all local $s$-sym\-met\-ries is generated by the flows of the Lie algebra $\fl$, which is the vector subspace of $\fv^{\rm op}$, generated by $\fv^{\rm op}_+$ by the bracket $(T^\si(n(x))+R^\si(n(x)))_\si$
on $\wedge^2 (\fv^{\rm op}_+)^*\otimes \fl$ and the natural bracket on the rest of $\fl$ for arbitrary $S$-invariant Weyl structure $\si$ on $(\ba'\to N,\om')$.

The pseudo-group generated by all local $s$-symmetries is locally transitive at~$x$ if and only if $\fv/\fp\subset \fl/(\fl\cap \fp)$, i.e., if and only if $(R^\si(n(x)))_\si$ spans the whole $\fv/\fp$.
\item[$4.$] There is a class of almost $S$-invariant Weyl structures on $U_x$ given by reductions of the images in~$\ba'$ of the $S$-invariant Weyl structures on~$N$ $($that exist due to Theorem~{\rm \ref{6.4})} to $\exp(\fg_-^s(1))\rtimes G_0\subset Q_0$. A~reduction corresponds to an $S$-invariant Weyl structure on~$U_x$ if and only if it is a~holonomy reduction.
\item[$5.$] We get that
\begin{itemize}\itemsep=0pt
\item the maps $\un S(x)$ can be extended to a larger neighbourhood of $x$ as long as the corresponding geodesic transformations of~$\nabla^\si$ on~$N$ are defined,
\item the space $TM^s(1)$ is integrable, it contains the vertical space of the local leaf space $n\colon U_x\to N$, and $Tn(T_xM^s(a))$ is the $a$-eigenspace of $T_{n(x)}\un S(n(x))$ in~$T_{n(x)}N$,
\item for each eigenvalue $a$, the distribution $TM^s(a)$ is preserved by all $($local$)$ automorphisms of the parabolic geometry, and
\item all almost $\un S$-invariant Weyl connections from Claim~$(4)$ restrict to the same partial linear connection on~$TM$ corresponding to the distribution $\ba_0\times_{G_0}\fv^{\rm op}_+$, which is preserved by~$\un S(x)$ for all $x\in M$.
\end{itemize}
\end{enumerate}
\end{Theorem}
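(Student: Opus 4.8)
The plan is to reduce Theorem~\ref{6.5} to Theorem~\ref{6.4} by the correspondence-space construction, exactly as Theorem~\ref{6.3} was reduced to Theorem~\ref{6.2}. Since $\fv/\fp$ inserts trivially into $\kappa_H$ by assumption, the distribution $Tp\circ\om^{-1}(\fv)$ is integrable, and \cite[Theorem~3.3]{cap-cor} produces, on a neighbourhood $U_x$ of each point, a local leaf space $n\colon U_x\to N$ together with a parabolic geometry $(\ba'\to N,\om')$ of type $(G,Q)$, where $Q$ is the parabolic subgroup with Lie algebra $\fv$ and $P\subset Q$, such that $(\ba|_{U_x}\to U_x,\om|_{U_x})$ is isomorphic to an open subset of the correspondence space $(\ba'\to\ba'/P,\om')$. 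The harmonic-curvature component $\mu$, on which $\fv/\fp$ inserts trivially, descends to a harmonic-curvature component of the type $(G,Q)$, and this yields the existence statements of Claim~(2).

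First I would verify that $(\ba'\to N,\om')$ satisfies the hypotheses of Theorem~\ref{6.4}. The crucial point is that the inclusion $\fg_{-1}^s(1)+\fp\subset\fv$ together with the maximality of $\fv$ forces the degree $-1$ part of the $Q$-grading (the analogue of $\fg_{-1}$ for the type $(G,Q)$) to have trivial $1$-eigenspace of $s$, so that the hypothesis $\fg_{-1}^s(1)=0$ of Theorem~\ref{6.4} holds on $N$; prolongation rigidity outside of the $1$-eigenspace of $s$ is inherited for the triple on $N$ by the same mechanism. I would also transfer the diagonal-neighbourhood assumptions on $\un S(x)(y)$ and $\un S(x)\circ\un S(y)^{-1}(z)$ from $U$ down to $N$ through $n$: each local $s$-symmetry $\un S(y)$ commutes with the projection and hence descends to a unique $s$-symmetry $\un S(n(y))$ on $N$ with $n\circ\un S(y)=\un S(n(y))\circ n$, the uniqueness being guaranteed by prolongation rigidity. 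Applying Theorem~\ref{6.4} to $N$ then delivers local homogeneity of $N$, the nonvanishing of its harmonic curvature everywhere, the distinguished class of $S$-invariant Weyl structures, and the Lie algebra $\fl$ expressed through $\fv^{\rm op}_+$ and the bracket $(T^\si+R^\si)_\si$.

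With $N$ under control, I would transport the conclusions back to $M$. Claim~(1) follows because $\kappa_H\neq 0$ everywhere on $N$ forces $\kappa_H=0$ on $U_x$ only along the preimage of a proper (Zariski) closed subset of the fibre $Q/P$, so $U$ is open and dense, while smoothness and uniqueness of $S$ come from Proposition~\ref{almost-ws} together with the descent to $N$. Claim~(3) is the pull-back of the Lie-algebra description of Theorem~\ref{6.4} on $N$, with local transitivity at $x$ equivalent to $(R^\si(n(x)))_\si$ spanning $\fv/\fp$. For Claim~(4), the almost $S$-invariant Weyl structures on $U_x$ are precisely the reductions of the images in $\ba'$ of the $S$-invariant Weyl structures on $N$ to $\exp(\fg_-^s(1))\rtimes G_0\subset Q_0$, the $S$-invariant ones corresponding to holonomy reductions. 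Claim~(5) then follows from Claim~(4) of Theorem~\ref{6.4} and the correspondence-space geometry: integrability of $TM^s(1)$ and its containment of the vertical space of $n$, the identification of $Tn(T_xM^s(a))$ with the $a$-eigenspace of $T_{n(x)}\un S(n(x))$, invariance of each distribution $TM^s(a)$ under all local automorphisms, and the common partial connection on $\ba_0\times_{G_0}\fv^{\rm op}_+$.

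I expect the main obstacle to be the verification step on $N$. Confirming that the two-sided squeeze $\fg_{-1}^s(1)+\fp\subset\fv\subset\fg_-^s(1)+\fp$ with $\fv$ maximal parabolic yields exactly the hypothesis $\fg_{-1}^s(1)=0$ of Theorem~\ref{6.4} for the type $(G,Q)$ requires a careful bookkeeping of the $s$-eigenvalues on the $Q$-graded pieces, and leans on the explicit structure tabulated in Tables~\ref{ahs7}--\ref{ahs9}. More delicately, one must check that the diagonal-neighbourhood assumptions genuinely survive the passage through the local leaf space so that Theorem~\ref{6.4} applies on $N$ rather than merely on $M$. The feature absent from Theorem~\ref{6.3} is that $\fv$ may be a proper subspace of $\fg_-^s(1)+\fp$, so $\fg_-^s(1)$ splits into a vertical part lying in $\fv$ and a part surviving to $N$; tracking this splitting is what makes the reduction group $\exp(\fg_-^s(1))\rtimes G_0$ of Claim~(4) and the partial connection of Claim~(5) more intricate than their counterparts in Theorem~\ref{6.3}.
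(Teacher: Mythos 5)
Your proposal follows essentially the same route as the paper's proof: both reduce to Theorem~\ref{6.4} via the correspondence-space result \cite[Theorem~3.3]{cap-cor} applied to the integrable distribution $Tp\circ\om^{-1}(\fv)$, with the key verification being precisely the fact that $(\fv^{\rm op}_1)^s(1)=0$ (your ``degree $-1$ part of the $Q$-grading has trivial $1$-eigenspace''), and then obtain Claims~(3)--(5) by descent and by comparing images in $\ba'$ of the $S$-invariant Weyl structures on $N$ with the almost $S$-invariant ones on $U_x$, intersecting in the reduction to $\exp(\fg_-^s(1))\rtimes G_0\subset Q_0$. Your extra care about transferring the diagonal-neighbourhood hypotheses down to $N$ and your Zariski-density argument for Claim~(1) are exactly what the paper leaves implicit (the latter mirroring its proof of Theorem~\ref{6.3}), so the proposal is correct and matches the paper.
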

\begin{proof}
Claim (1) is a direct consequence of Claims~(2) and~(3). Claim~(2) follows from~\cite{cap-cor} and the fact that $(\fv^{\rm op}_1)^s(1)=0$ holds. Then Claim~(3) is a clear consequence of Theorem~\ref{6.4}. Claim~(4) follows from the comparison of images in $\ba'$ of the $S$-invariant Weyl structure on~$N$ and the almost $S$-invariant Weyl structures on $U_x$, because they intersect precisely in the reduction to $\exp(\fg_-^s(1))\rtimes G_0\subset Q_0$. Claim~(5) is a consequence of Claim~(5) of Theorem~\ref{6.4} and the properties of Weyl structures from Claim (4).
\end{proof}

\subsection[Parabolic geometries with $\fg_{-1}^s(1)$ that inserts non-trivially into the harmonic curvature]{Parabolic geometries with $\boldsymbol{\fg_{-1}^s(1)}$ that inserts non-trivially\\ into the harmonic curvature}

There are also some remaining parabolic geometries, which can have a part of $\fg_{-1}^s(1)$ that inserts non-trivially into the harmonic curvature. These are contained in Tables~\ref{ahs10},~\ref{ahs11} and~\ref{ahs12}.

\begin{table}[th!]\centering\caption{Theorem \ref{6.6}, part with $|\Si|=2$.}\label{ahs10}\vspace{1mm}

\begin{tabular}{|c|c|c|c|c|}
\hline
$\fg$ & $\Sigma$ & $j_{i_1}$& $j_{i_2}$& $\mu$ \\
\hline
\hline
$\mathfrak{sl}(3,\mathbb{C})$ & $\{1,2\}$ &$2\phi_2$&& $(\alpha_{1},\alpha_{2'})$ \\
\hline
$\mathfrak{sl}(4,\{\R,\C\})$ & $\{1,2\}$ & $\sqrt{1}$ & & $(\alpha_1,\alpha_2)$ \\
\hline
$\mathfrak{sl}(4,\mathbb{C})$ & $\{1,2\}$ &$2\phi_2$&& $(\alpha_{1},\alpha_{2'})$ \\
\hline
$\mathfrak{sl}(4,\mathbb{C})$ & $\{1,3\}$ &$2\phi_3$&& $(\alpha_{1},\alpha_{3'})$ \\
\hline
$\mathfrak{su}(2,2)$&$\{1,2\}$&$r_1$& $\sqrt{1} $& $(\alpha_2,\alpha_{1})$\\
\hline
$\mathfrak{su}(2,2)$&$\{1,2\}$&$ \sqrt[3]{1}$&&$(\alpha_1,\alpha_{2})$ \\
\hline
$\mathfrak{sl}(n+1,\{\R,\C\})$ & $\{1,2\}$ & $\sqrt{1}$ & & $(\alpha_1,\alpha_2)$ \\
\hline
$\mathfrak{sl}(n+1,\{\mathbb{R, C}\})$& $\{1,p\}$, $2<p<n$& $1$& & $(\alpha_1,\alpha_p)$\\
\hline
$\mathfrak{sl}(n+1,\mathbb{C})$ & $\{1,p\}$ &$2\phi_p$&& $(\alpha_{1},\alpha_{p'})$ \\
\hline
$\mathfrak{su}(q,n-q+1)$ &$\{1,2\}$&&$-\frac23\phi_1$&$(\alpha_2,\alpha_1)$\\
\hline
$\mathfrak{so}(3,4), \mathfrak{so}(7,\C)$&$\{ 2,3\}$&&$\sqrt[3]{1}$& $(\alpha_3,\alpha_2)$ \\
\hline
$\mathfrak{so}(2,5)$, $\mathfrak{so}(3,4)$, $\mathfrak{so}(7,\mathbb{C})$& && & \\
$\mathfrak{so}(2,6)$, $\mathfrak{so}(3,5)$,& $\{1,2\}$ &&$1$ & $(\alpha_2,\alpha_1)$\\
$\mathfrak{so}(4,4)$, $\mathfrak{so}(7,\mathbb{C})$& & && \\
\hline
$\mathfrak{so}(3,5)$&$\{2,3\}$& &$\sqrt[3]{1} $&$(\alpha_3,\alpha_2)$\\
\hline
$\mathfrak{so}(q,n-q)$, $\mathfrak{so}(n,\mathbb{C})$& $\{1,2\}$ &&$1$ & $(\alpha_2,\alpha_1)$\\
\hline
$\mathfrak{sp}(4,\{\mathbb{R, C}\})$& $\{1,2\}$ &$\sqrt[3]{1}$&& $(\alpha_1,\alpha_2)$ \\
\hline
$\mathfrak{sp}(4,\mathbb{C})$ & $\{1,2\}$&$2\phi_2$& & $(\alpha_{1},\alpha_{2'})$ \\
\hline
$\mathfrak{sp}(4,\mathbb{C})$ & $\{1,2\}$&$-\frac25\phi_2$&& $(\alpha_{1'},\alpha_{2'})$ \\
\hline
$\mathfrak{sp}(6,\{\mathbb{R, C}\})$& $\{1,3\}$ &$1$&& $(\alpha_1,\alpha_3)$ \\
\hline
$\mathfrak{sp}(6,\{\mathbb{R, C}\})$& $\{2,3\}$&$1$&& $(\alpha_{2},\alpha_3)$ \\
\hline
$\mathfrak{sp}(6,\mathbb{C})$ & $\{1,3\}$&$2\phi_3$& & $(\alpha_{1},\alpha_{3'})$ \\
\hline
$\mathfrak{sp}(6,\mathbb{C})$ & $\{2,3\}$&$-\frac25\phi_3$&& $(\alpha_{2'},\alpha_{3'})$ \\
\hline
$\mathfrak{sp}(2n,\{\mathbb{R, C}\})$& $\{1,n\}$ &$1$&& $(\alpha_1,\alpha_n)$ \\
\hline
$\mathfrak{sp}(2n,\{\mathbb{R, C}\})$& $\{n-1,n\}$&$1$&& $(\alpha_{n-1},\alpha_n)$ \\
\hline
$\mathfrak{sp}(2n,\mathbb{C})$ & $\{1,n\}$&$2\phi_n$& & $(\alpha_{1},\alpha_{n'})$ \\
\hline
$\mathfrak{sp}(2n,\mathbb{C})$ & $\{n-1,n\}$&$-\frac25\phi_n$&& $(\alpha_{n-1'},\alpha_{n'})$ \\
\hline
$\mathfrak{g}_2(\{2,\C\})$ & $\{1,2\}$ & $\sqrt[4]{1}$ && $(\alpha_1,\alpha_2)$ \\
\hline
\end{tabular}
\end{table}

\begin{table}[th!]\centering\caption{Theorem \ref{6.6}, part with $|\Si|=3$.}\label{ahs11}\vspace{1mm}

\begin{tabular}{|c|c|c|c|c|c|c|}
\hline
$\fg$ & $\Sigma$ & $j_{i_1}$& $j_{i_2}$& $j_{i_3}$&$\mu$&PR \\
\hline
\hline
$\mathfrak{sl}(4,\{\mathbb{R, C}\})$ &$\{1,2,3\}$ &&&$j_1^2$&$(\alpha_1,\alpha_2)$ & \\
\hline
$\mathfrak{sl}(n+1,\{\mathbb{R, C}\})$ &$\{1,2,3\}$ &&&$j_1^2$&$(\alpha_1,\alpha_2)$ & \\
\hline
$\mathfrak{sl}(n+1,\{\mathbb{R, C}\})$ &$\{1,2,p\}$, $3<p<n$ &&&$j_1^2$&$(\alpha_1,\alpha_2)$ & $j_1=\sqrt{1}$ \\
\hline
$\mathfrak{sl}(n+1,\{\mathbb{R, C}\})$ &$\{1,2,n\}$ &&&$j_1^2$&$(\alpha_1,\alpha_2)$ &\\
\hline
$\mathfrak{sl}(n+1,\{\mathbb{R, C}\})$&$\{1,2,n\}$ && $j_1j_n$& &$(\alpha_1,\alpha_n)$& \\
\hline
$\mathfrak{so}(3,4)$, $\mathfrak{so}(7,\mathbb{C})$&$\{1,2,3\}$& $j_3^3 $&&&$(\alpha_3,\alpha_2)$& \\
\hline
\end{tabular}
\end{table}

\begin{table}[th!]\centering\caption{Theorem \ref{6.6}, part with $|\Si|=4$.}\label{ahs12}\vspace{1mm}

\begin{tabular}{|c|c|c|c|c|c|c|c|c|}
\hline
$\fg$ & $\Sigma$ & $j_{i_1}$& $j_{i_2}$& $j_{i_3}$& $j_{i_4}$& $\mu$& PR \\
\hline
\hline
$\mathfrak{sl}(n+1,\{\mathbb{R, C}\})$ & $\{1,2,3,n\}$&&&&$j_1j_2^2j_p^{-1}$&$(\alpha_2,\alpha_1)$&$j_1=1$\\
\hline
\end{tabular}
\end{table}

\begin{Example}
Let us continue in the discussion of generalized path geometries from Examp\-le~\ref{expg}. The case when the harmonic curvature $\kappa_{(\alpha_2,\alpha_1)}$ vanishes and the harmonic torsion $\kappa_{(\alpha_1,\alpha_2)}$ does not vanish can be found in Table~\ref{ahs10}. There are several possible situations depending on the eigenvalues of $s\in Z(G_0)$.

If $j_1=1$, then we are precisely in the situation which is not covered by any of the previous theorems and we can apply only the results of Propositions~\ref{almost-ws} and~\ref{6.6}.

If $j_1=-1$ and $j_2=1$, then we can apply Theorem~\ref{6.3} and we are in the situation of a~generalized path geometry on the projectivized cotangent space of an af\/f\/ine locally symmetric space.

If $j_1=-1$ and $j_2=-1$, then $\fg_-^s(1)=\fg_{-2}$ and we need the assumptions of Theorem~\ref{6.4} to show that we are in the situation of a generalized path geometry on a (locally) homogeneous $(n-1)$-dimensional f\/iber bundle over an af\/f\/ine locally symmetric space.

Finally, if $j_1=-1$ and $j_2\neq \sqrt{1}$, then we can apply Theorem~\ref{6.2}.
\end{Example}

The properties of these geometries are as follows.

\begin{Proposition}\label{6.6}
Assume $(\fg,\fp,\mu)$ is prolongation rigid outside of the $1$-eigenspace of $s$ for some $s\in Z(G_0)$. Assume the harmonic curvature $\kappa_H$ of a $($locally$)$ $s$-symmetric parabolic geo\-met\-ry $(\ba\to M,\om)$ of type $(G,P)$ has a non-zero component in $\mu$ at all $x\in M$ and $S$ is the unique system of $($local$)$ $s$-symmetries on~$M$. Then the distribution~$TM^s(1)$ is integrable and for each $x \in M$, the leaf~$\mathcal{F}_x$ of the foliation $\mathcal{F}$ of $TM^s(1)$ through~$x$ is a~totally geodesic submanifold for arbitrary Weyl connection.

Let $n\colon U_x\to N$ be a sufficiently small local leaf space of $TM^s(1)$.
\begin{itemize}\itemsep=0pt
\item There is a unique local diffeomorphism $\un S(n(y))$ of the local leaf space $N$ at each $n(y)\in N$ such that $\un S(n(y))\circ n=n\circ \un S(x)$ holds for all $y\in U_x$, and
\item for each eigenvalue $a$, $T_yn(T_yM^s(a))$ is the $a$-eigenspace of $T_{n(y)}\un S(n(y))$ for all $y\in U_x$.
\end{itemize}
\end{Proposition}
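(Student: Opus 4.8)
The plan is to follow the proof of Claim~(5) of Theorem~\ref{6.4}, which now applies almost verbatim: the present hypotheses (prolongation rigidity outside of the $1$-eigenspace of $s$ together with $\kappa_H$ having a non-zero component in $\mu$ at every point) are exactly the assumptions of Proposition~\ref{almost-ws}. I would therefore begin by invoking Proposition~\ref{almost-ws} to obtain a smooth almost $S$-invariant Weyl structure $\si$, the smoothness of $S$, the distributions $TM^s(a)=Tp_0\circ(\om^\si_-+\om^\si_0)^{-1}(\fg_-^s(a))$ that are preserved by every (local) $s$-symmetry, and the fact that every almost $\un S$-invariant Weyl connection preserves the decomposition $TM=\oplus_a TM^s(a)$. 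I would also use that, by Claim~(3) of Proposition~\ref{almost-ws}, for each $x$ there is an almost $S$-invariant Weyl structure that is invariant at $x$ and satisfies $\un S(x)=s^\si_x$.

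For the integrability of $TM^s(1)$ I would argue pointwise. Fix $x$ and choose an almost $S$-invariant $\si$ that is invariant at $x$; then $\un S(x)^*\nabla^\si=\nabla^\si$ together with $T_x\un S(x)=\Ad(s)$ in the frame $\si(u_0)$ give $s.(T^\si)_\si=(T^\si)_\si$ over $x$ for the torsion $T^\si$. Since the $s$-eigenvalues form a grading of $\fg$ and $s$ acts by $1$ on $\fg_-^s(1)$, this invariance forces $(T^\si)_\si(\fg_-^s(1),\fg_-^s(1))\subseteq\fg_-^s(1)$ at $x$. Combining this with $\nabla^\si_\xi\eta\in TM^s(1)$ for $\eta\in TM^s(1)$ in the identity $[\xi,\eta]=\nabla^\si_\xi\eta-\nabla^\si_\eta\xi-T^\si(\xi,\eta)$ yields $[\xi,\eta](x)\in TM^s(1)$, so $TM^s(1)$ is involutive. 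For the totally geodesic claim, the almost $S$-invariant $\nabla^\si$ preserves $TM^s(1)$, so $\mathcal{F}_x$ is totally geodesic for $\nabla^\si$; for an arbitrary Weyl connection $\nabla^{\si\exp(\U)_\si}$ I would insert tangent directions and tangent vectors into the change-of-connection formula~(\ref{change-wc}) and check that the correction term keeps the parallel transport along $\mathcal{F}_x$ inside $TM^s(1)$, so that $\mathcal{F}_x$ remains totally geodesic for every Weyl connection.

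With the leaves totally geodesic, the descent to the local leaf space $n\colon U_x\to N$ proceeds via the uniqueness of $s$-symmetries. Since $\Ad(s)$ acts as the identity on $\fg_-^s(1)$, the formula in Definition~\ref{def3} shows that $\un S(x)=s^\si_x$ maps each generalized geodesic from $x$ in a direction of $\fg_-^s(1)$ to itself, fixing it pointwise; because $\mathcal{F}_x$ is totally geodesic and $TM^s(1)=Tp_0\circ(\om^\si_-+\om^\si_0)^{-1}(\fg_-^s(1))$, such geodesics stay in $\mathcal{F}_x$ and fill a neighbourhood of $x$ in it, whence $\un S(x)|_{\mathcal{F}_x}=\id$. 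For $v=\Fl_1^{\om^{-1}(X)}(u)$ with $X\in\fg_-^s(1)$ and $y=p(v)\in\mathcal{F}_x$ one then has $S(x)v=vs$ by~(\ref{autpos}), so $S(x)$ is the $s$-symmetry at $y$; uniqueness gives $S(x)=S(y)$, i.e.\ $S$ (hence $\un S$) is constant along the leaves. As $\un S(x)$ preserves $TM^s(1)$ it maps leaves to leaves, and being constant along them it descends to a well-defined local diffeomorphism $\un S(n(y))$ with $\un S(n(y))\circ n=n\circ\un S(x)$. Statement~(b) then follows from $\ker Tn=TM^s(1)$ and Claims~(6)--(7) of Proposition~\ref{almost-ws}: $TM^s(a)$ is the $a$-eigenspace of $T\un S(x)$, and $Tn$ identifies it with the $a$-eigenspace of the descended $T_{n(y)}\un S(n(y))$.

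The step I expect to be the main obstacle is the totally geodesic claim for an \emph{arbitrary} Weyl connection: one must verify, using~(\ref{change-wc}) and the multiplicativity of the $s$-eigenvalue grading, that the correction term added to the almost $S$-invariant $\nabla^\si$ does not push parallel transport of vectors in $\fg_-^s(1)$ out of $TM^s(1)$ along $\mathcal{F}_x$. Once this is secured, the remaining content is a transcription of the pointwise torsion computation and of the uniqueness-of-$s$-symmetries descent already carried out for Theorem~\ref{6.4}.
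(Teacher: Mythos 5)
Your proposal is correct and takes essentially the same route as the paper, whose proof of this proposition consists precisely of the remark that the argument for the last claim of Theorem~\ref{6.4} carries over once the globally $\un S$-invariant Weyl structure is replaced, for each $x$, by an almost $S$-invariant Weyl structure invariant at~$x$ from Claim~(3) of Proposition~\ref{almost-ws} --- exactly the substitution you make. Your expanded verifications (pointwise torsion invariance at $x$ for involutivity, the correction term in~(\ref{change-wc}) having $\fg_0$-values and hence preserving $TM^s(1)$ because $s\in Z(G_0)$, and the descent to the leaf space via uniqueness of $s$-symmetries) are just the steps the paper leaves implicit in that reduction.
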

\begin{proof}
The proof is analogous to the proof of Claim (6) of Theorem~\ref{6.4}, but, instead of an $S$-invariant Weyl structure~$\si$, we need to consider some almost $S$-invariant Weyl structure invariant at $x$ from Claim (3) of Proposition~\ref{almost-ws} for each $x\in M$.
\end{proof}

\subsection{Parabolic geometries that do not admit non-f\/lat examples}

There are triples $(\fg,\fp,\mu)$ that are prolongation rigid outside of the $1$-eigenspace of~$s$ for some $s\in Z(G_0)$, but they admit only f\/lat (locally) $s$-symmetric parabolic geometries due to the structure of the harmonic curvature and \cite[Lemma~2.2]{GZ2}. These are contained in Table~\ref{ahs13}.

\begin{table}[th!]\centering \caption{Flat geometries.}\label{ahs13}\vspace{1mm}

\begin{tabular}{|c|c|c|c|c|c|}
\hline
$\fg$ & $\Sigma$ & $j_{i_1}$& $j_{i_2}$& $\mu$ \\
\hline
\hline
$\mathfrak{sl}(3,\mathbb{C})$ & $\{1\}$&$\sqrt[3]{1}$&$1$&$(\alpha_{1},\alpha_2)$\\
\hline
$\mathfrak{sl}(3,\{\mathbb{R},\mathbb{C}\})$ & $\{1,2\}$&$j_1^4\neq1, j_1^5\neq 1$&$j_1^3$&$(\alpha_{1},\alpha_2)$\\
\hline
$\mathfrak{so}(5,\mathbb{C})$ & $\{1\}$&$\sqrt[3]{1}$&$1$&$(\alpha_{1},\alpha_2)$\\
\hline
$\mathfrak{so}(2,3), \mathfrak{so}(5,\mathbb{C})$ & $\{1,2\}$&$j_1^4\neq1, j_1^5\neq 1, j_1^7\neq 1$&$j_1^3$&$(\alpha_{1},\alpha_2)$\\
\hline
\end{tabular}
\end{table}

\subsection[Parabolic geometries with more non-zero components of the harmonic curvature]{Parabolic geometries with more non-zero components \\ of the harmonic curvature}

Let us also look at the parabolic geometries that allow a harmonic curvature~$\kappa_H$ with several non-zero components $\mu_i$ such that for each~$\mu_i$ the triple $(\fg,\fp,\mu_i)$ is not prolongation rigid outside of the $1$-eigenspace of $s$. In Table~\ref{ahs14}, we present the complete classif\/ication of all triples $(\fg,\fp,\mu_i)$ that are not prolongation rigid outside of the $1$-eigenspace of~$s$ for the same $s\in Z(G_0)$, but for which $\mathfrak{a}_i$ in Proposition~\ref{multigrad} is contained in the $1$-eigenspaces of $s$ when the harmonic curvature has non-zero component in each $\mu_i$. Geometric properties of the geometries from Table~\ref{ahs14} can be deduced from the previous sections depending on the position and shape of $\fg_{-}^s(1)$ inside of~$\fg_-$.

\begin{table}[th!]\centering \caption{More non-zero components of the harmonic curvature.}\label{ahs14}\vspace{1mm}

\begin{tabular}{|c|c|c|c|c|c|c|c|}
\hline
$\fg$ & $\Sigma$ & eigenvalues & $\mu$ \\
\hline
\hline
$\mathfrak{sl}(4,\C)$ & $\{1,2\}$ & $j_1=\sqrt[4]{1}^{2}, j_2=\sqrt[4]{1}$ &$(\alpha_2,\alpha_3)$, $(\alpha_2,\alpha_1)$ \\
\hline
$\mathfrak{sl}(n+1,\mathbb{C})$ &$\{1,n-1\}$&$j_1=\sqrt[3]{1},j_{n-1}=\sqrt[3]{1}^2$& $(\alpha_1,\alpha_2)$, $(\alpha_{n-1},\alpha_n)$ \\
\hline
$\mathfrak{sl}(n+1,\mathbb{C})$ &$\{2,n-1\}$&$j_2=\sqrt[3]{1},j_{n-1}\sqrt[3]{1}^2$& $(\alpha_2,\alpha_1)$, $(\alpha_{n-1},\alpha_n)$ \\
\hline
$\mathfrak{sl}(4,\mathbb{C})$&$\{1,2,3\}$ & $j_2=\sqrt[4]{1},j_3=j_1(\sqrt[4]{1})^2$ &$(\alpha_2,\alpha_1)$, $(\alpha_2,\alpha_3)$ \\
\hline
$\mathfrak{sl}(n+1,\{\mathbb{R, C}\})$ & $\{1,2,n-1,n\}$&$j_{n-1}=j_2^{-1}, j_n=j_1j_2^3$&$(\alpha_2,\alpha_1)$, $(\alpha_{n-1},\alpha_n)$\\
\hline
\end{tabular}
\end{table}

\subsection[Remaining parabolic geometries with $\mu$ in the 1-eigenspace of $s$]{Remaining parabolic geometries with $\boldsymbol{\mu}$ in the 1-eigenspace of $\boldsymbol{s}$}

For the sake of completeness, let us remark that there are triples $(\fg,\fp,\mu)$ that are not prolongation rigid outside of the $1$-eigenspace of~$s$ for any $s$ such that $\mu$ is in the $1$-eigenspace of $s$. These are contained in Table~\ref{nhs}.

\begin{table}[th!]\centering \caption{Remaining parabolic geometries with $\mu$ in the $1$-eigenspace of $s$.}\label{nhs}\vspace{1mm}

\begin{tabular}{|c|c|c|c|c|c|}
\hline
$\fg$ & $\Sigma$ & $j_{i_1}$& $j_{i_2}$& $\mu$ \\
\hline
\hline
$\mathfrak{sl}(n+1,\{\mathbb{R, C}\})$ & $\{p,p+1\}$, $n-1>p>1$&&$1$&$(\alpha_{p+1},\alpha_p)$\\
\hline
$\mathfrak{so}(q,n-q), \mathfrak{so}(n,\mathbb{C})$ & $\{2,3\}$&&$1$&$(\alpha_{3},\alpha_2)$\\
\hline
\end{tabular}
\end{table}

\appendix

\section[A construction of locally homogeneous locally $s$-symmetric parabolic geometries]{A construction of locally homogeneous locally $\boldsymbol{s}$-symmetric\\ parabolic geometries}\label{sec5}

It is proved in \cite[Section 2]{GZ2} how to algebraically construct and classify all homogeneous $s$-symmetric parabolic geometries. Part of the classif\/ication is done in \cite{G2,G3} using the classif\/ication of semisimple symmetric spaces. There is the result from \cite[Section~1.3]{disertace} and \cite[Lemma~2.2]{GZ2} stating that for the construction and the classif\/ication of locally homogeneous locally $s$-symmetric parabolic geometries, it is suf\/f\/icient to f\/ind the following data:
\begin{itemize}\itemsep=0pt
\item an extension $(\alpha,i)$ of the Klein geometry $(K,H)$ to $(G,P)$ such that the action of $s$ preserves $\alpha(\fk)\subset \fg$, and $s$ acts trivially on the tensor $[\cdot ,\cdot ]-\alpha([\alpha^{-1}(\cdot ),\alpha^{-1}(\cdot )])$ in $\wedge^2 \fg/\fp^*\otimes \fg$, and
\item the subset $\mathcal{A}$ of $P$ consisting of elements $g\in P$, which act as local automorphisms on the parabolic geometry $(K\times_{i}P\to K/H,\om_\alpha)$ of type $(G,P)$ given by the extension~$(\alpha,i)$.
\end{itemize}

If $U$ and $V$ are open subsets of $K/H$ such that there are $k\in K$, $g\in\mathcal{A}$ and a maximal open subset $W$ of $U$ such that $kg(W)\subset V$, then we can glue $K\times_{i}P|_U\to U$ with $K\times_{i}P|_V\to V$ by identifying $w\in W\subset U$ with $kg(w)\in V$, and glue the Cartan connection $\om_\alpha|_U$ with the pullback connection $(kg)^*\om_\alpha|_V=\om_\alpha|_{(kg)^*(V)}$. Of course, we can without loss of generality assume that~$U$,~$V$ and~$W$ are simply connected, because we can always choose coverings of our manifolds by open sets satisfying this condition. Therefore, we can also assume that the automorphism~$k$ is given by the f\/low of a local inf\/initesimal automorphism of $(K\times_{i}P\to K/H,\om_\alpha)$. Then we obtain the following result as a consequence of the construction in \cite[Section~3]{GZ3} and \cite[Section~1.3]{disertace}.

\begin{Theorem} \label{5.1}
Let $(\ba\to M,\om)$ be a locally homogeneous locally $s$-symmetric parabolic geo\-metry, let $\fk$ be the Lie algebra of the local infinitesimal automorphisms and denote by $\alpha$ the inclusion of $\fk$ into $\fg$ given by $\om(u)$ at some $u\in \ba$. Then:
\begin{enumerate}\itemsep=0pt
\item[$1)$] $\Ad(s)(\fk)\subset \fk$ is an automorphism of the Lie algebra $\fk$,
\item[$2)$] there exist $($see {\rm \cite[Section 3]{GZ3}} for the explicit construction$)$
\begin{itemize}\itemsep=0pt
\item a Klein geometry $(K,H)$ such that $\fk$ is the Lie algebra of $K$,
\item an extension $(\alpha,i)$ of $(K,H)$ to $(G,P)$,
\item an open covering $U_a$ of $M$, and
\item isomorphisms $\phi_a\colon U_i\to K/H$ of parabolic geometries $(\ba|_{U_a}\to U_i,\om|_{U_b})$ and $(K\times_{i}P|_{\phi_a(U_a)}\to \phi_a(U_a), \om_\alpha|_{\phi_a(U_a)})$ of type $(G,P)$ such that $\phi_a\circ \phi_b^{-1}$ is the restriction of the left action of some element of $K$ for each $a$, $b$.
\end{itemize}
\end{enumerate}
\end{Theorem}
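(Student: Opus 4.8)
The plan is to prove the two claims separately: Claim~1) follows from a direct computation with a single $s$-symmetry, while Claim~2) is an instance of the general theory of locally homogeneous Cartan geometries recalled at the beginning of this Appendix.

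First I would establish Claim~1). Fix $u\in\ba$, put $x=p(u)$, and let $s_u$ be the (local) $s$-symmetry at $x$ with $s_u(u)=us$, which exists because the geometry is locally $s$-symmetric; note that $\un s_u$ fixes $x$, so $s_u$ maps local infinitesimal automorphisms defined near $x$ to local infinitesimal automorphisms defined near $x$. Since $s_u$ is a (local) automorphism, the pushforward $\eta\mapsto (s_u)_*\eta$ is a Lie algebra automorphism of $\fk$, so it suffices to read off its effect under $\alpha$. As $s_u$ is a $P$-bundle morphism we have $s_u(up)=s_u(u)p=usp$ for $p\in P$, whence $s_u^{-1}(u)=us^{-1}$. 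Using $s_u^*\om=\om$, the right $P$-equivariance of $\om$, and the $P$-invariance of infinitesimal automorphisms, I would compute
\begin{gather*}
\alpha\big((s_u)_*\eta\big)=\om_u\big((s_u)_*\eta(u)\big)=\big(s_u^*\om\big)_{us^{-1}}\big(\eta\big(us^{-1}\big)\big)\\
=\om_{us^{-1}}\big(\eta\big(us^{-1}\big)\big)=\Ad(s)\om_u(\eta(u))=\Ad(s)\alpha(\eta).
\end{gather*}
Since $(s_u)_*\eta\in\fk$ and $\alpha$ is injective (an infinitesimal automorphism is determined by its value at one point), this shows $\Ad(s)$ preserves $\alpha(\fk)$ and that $\Ad(s)|_{\alpha(\fk)}=\alpha\circ(s_u)_*\circ\alpha^{-1}$, which is a Lie algebra automorphism of $\fk$ identified with $\alpha(\fk)$; this is Claim~1). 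The same identity also shows that $\Ad(s)$ fixes the tensor $[\cdot,\cdot]-\alpha([\alpha^{-1}(\cdot),\alpha^{-1}(\cdot)])$, which is exactly the $s$-compatibility required to run the construction from the start of the Appendix.

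For Claim~2) I would introduce the isotropy subalgebra $\fh:=\{\eta\in\fk\colon \alpha(\eta)\in\fp\}$, consisting of those infinitesimal automorphisms whose underlying vector field vanishes at $x$. Local homogeneity means precisely that the evaluation $\fk\to T_xM$ is surjective, i.e.\ that $\alpha$ induces a linear isomorphism $\fk/\fh\to\fg/\fp$. I would then take $K$ to be a connected Lie group with Lie algebra $\fk$ and $H$ a closed subgroup with Lie algebra $\fh$ (passing to a suitable cover if necessary so that $H$ is closed), producing a Klein geometry $(K,H)$. The restriction $\alpha|_\fh\colon\fh\to\fp$ integrates to a homomorphism $i\colon H\to P$, and the $\Ad$-equivariance of $\alpha$ (with respect to $\Ad$ on $\fk$ and $\Ad\circ i$ on $\fg$) makes $(\alpha,i)$ an extension of $(K,H)$ to $(G,P)$ in the sense recalled above, with associated homogeneous geometry $(K\times_i P\to K/H,\om_\alpha)$.

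Finally I would produce the local isomorphisms. By the theory of locally homogeneous Cartan geometries recalled in \cite[Section~1.3]{disertace} and \cite[Section~3]{GZ3}, integrating a basis of $\fk$ to flows yields, on a neighbourhood $U_a$ of each point, an isomorphism $\phi_a\colon U_a\to K/H$ of $(\ba|_{U_a}\to U_a,\om|_{U_a})$ with an open piece of the model. On an overlap two such charts differ by an automorphism of the model; since the fundamental vector fields of the left $K$-action exhaust the infinitesimal automorphisms coming from $\fk$, and an automorphism of a Cartan geometry is determined by its value at a single point, each transition $\phi_a\circ\phi_b^{-1}$ must be the restriction of left translation by some element of $K$. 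The main obstacle is exactly this integration-and-gluing step: one must ensure that the purely algebraic data $(\fk,\fh,\alpha)$ integrates to groups with $H$ closed and $i$ well defined, and that the charts patch with transitions in $K$ rather than in the larger automorphism pseudogroup. This is precisely where the rigidity of Cartan connections and the cited constructions are indispensable, and I would invoke them rather than reprove them here.
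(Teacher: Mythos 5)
Your proposal is correct and matches the paper's treatment of Theorem~\ref{5.1}: the paper gives no standalone proof, obtaining the theorem as a consequence of the constructions in \cite[Section~3]{GZ3} and \cite[Section~1.3]{disertace}, which is exactly where you defer the integration-and-gluing step of Claim~2, and your direct computation $\alpha\circ(s_u)_*=\Ad(s)\circ\alpha$ for Claim~1 (together with injectivity of $\alpha$ by rigidity of Cartan connections, and the resulting $\Ad(s)$-invariance of the difference tensor) soundly verifies what the paper leaves implicit. One wording fix: local $s$-symmetry at $x=p(u)$ only guarantees an $s$-symmetry $s_v$ for \emph{some} $v\in\ba_x$, not for a pre-fixed $u$; if $v=up$ with $p$ having a nontrivial $\exp(\fp_+)$-part, your computation yields invariance of $\om(u)(\fk)$ under $\Ad\big(psp^{-1}\big)$ rather than $\Ad(s)$, so you must choose $u$ so that $s_u$ itself is the symmetry --- a choice the theorem's phrase ``at some $u\in\ba$'' explicitly permits, after which your argument goes through.
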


\subsection*{Acknowledgments}

JG supported by the Grant agency of the Czech Republic under the grant GBP201/12/G028.
The authors would like to thank the anonymous referees for their valuable comments which helped to improve the manuscript.

\pdfbookmark[1]{References}{ref}
\LastPageEnding

\end{document}